\newtheoremstyle{mio}%
	{}{} 
	{\itshape}{} 
	{\bfseries}{.}{ } 
	{#1 #2\thmnote{\mdseries~ #3}} 
\theoremstyle{mio}
\newtheorem{teor}{Theorem}[section]
\newtheorem{cor}[teor]{Corollary}
\newtheorem{prop}[teor]{Proposition}
\newtheorem{lemma}[teor]{Lemma}
\newtheorem{defin}[teor]{Definition}
\theoremstyle{definition}
\newtheorem{ex}[teor]{Example}
\newtheorem{oss}[teor]{Remark}
\renewcommand{\star}{\ast}
\newcommand{\insfracid}{\mathcal{F}}
\newcommand{\inssubmod}{\mathbf{F}}
\newcommand{\insstarid}{\mathcal{F}}
\newcommand{\insstar}{\mathrm{Star}}
\newcommand{\inssemiprime}{\mathrm{Sp}}
\newcommand{\inssmstar}{\mathrm{(S)Star}}
\newcommand{\inssemistar}{\mathrm{SStar}}
\newcommand{\insstarstab}{\insstar_{st}}
\newcommand{\Spec}{\mathrm{Spec}}
\newcommand{\Max}{\mathrm{Max}}
\newcommand{\Jac}{\mathrm{Jac}}
\newcommand{\Inv}{\mathrm{Inv}}
\newcommand{\Prin}{\mathrm{Prin}}
\newcommand{\Cl}{\mathrm{Cl}}
\newcommand{\Pic}{\mathrm{Pic}}
\newcommand{\eql}{\end{equation*}\begin{equation*}}
\newcommand{\ins}[1]{\mathbb{#1}}
\newcommand{\insZ}{\ins{Z}}
\newcommand{\insQ}{\ins{Q}}
\newcommand{\insR}{\ins{R}}
\newcommand{\valut}{\mathbf{v}}
\newcommand{\ortog}[1]{#1^\perp}
\title{Jaffard families and localizations of star operations}
\author{Dario Spirito}
\email{spirito@mat.uniroma3.it}
\address{Dipartimento di Matematica e Fisica, Universit\`a degli Studi
``Roma Tre'', Roma, Italy}
\date{\today}
\keywords{Star operations, overrings, Jaffard families, Pr\"ufer domains, class group}
\subjclass[2010]{13A15, 13C20, 13F05, 13F30, 13G05}
\begin{document}

\begin{abstract}
We generalize the concept of localization of a star operation to flat overrings; subsequently, we investigate the possibility of representing the set $\insstar(R)$ of star operations on $R$ as the product of $\insstar(T)$, as $T$ ranges in a family of overrings of $R$ with special properties. We then apply this method to study the set of star operations on a Pr\"ufer domain $R$, in particular the set of stable star operations and the star-class groups of $R$.
\end{abstract}

\maketitle

\section{Introduction}
Recently, the study of star operations, initiated by the works of Krull \cite{krull_idealtheorie} and Gilmer \cite[Chapter 32]{gilmer}, has focused on studying the whole set $\insstar(R)$ of star operations on $R$, and in particular its cardinality. Using as a starting point the characterization of domains with $|\insstar(R)|=1$ due to Heinzer \cite{heinzer_d=v}, Houston, Mimouni and Park have devoted a series of papers \cite{twostar,houston_noeth-starfinite,hmp_finite,starnoeth_resinfinito} to this study, obtaining, among other results, a characterization of Pr\"ufer domains with two star operations \cite[Theorem 3.3]{twostar} and the precise determination of $|\insstar(R)|$ on some classes of one-dimensional Noetherian domains \cite{houston_noeth-starfinite,starnoeth_resinfinito}. Their work is based -- at least partly -- on the concept of \emph{localization} of finite-type star operations to localizations of the ring.

The purpose of this paper is to generalize the concept of localization of a star operation $\star$, by avoiding (when possible) the hypothesis that $\star$ is of finite type and by considering, instead of localizations, flat overrings of the base ring $R$. In particular, we will prove that, if $R$ admits a family of overrings with certain properties (precisely, a \emph{Jaffard family} \cite[Section 6.3]{{fontana_factoring}}) then $\insstar(R)$ can be represented as a cartesian product of $\insstar(T)$, as $T$ ranges in this family, and that this representation preserves the main properties of the star operations.

We then specialize to the case of Pr\"ufer domain, when this approach is complemented by the possibility, in certain cases, to link star operations on $R$ with star operations on a quotient of $R$. This method allows one to obtain a better grasp of several properties, like being a stable operation (Proposition \ref{prop:prufer-stab}), and to describe the star-class group of $R$ in terms of the class groups of some localizations of $R$.

\section{Preliminaries and notation}
Let $R$ be an integral domain with quotient field $K$, and denote by $\insfracid(R)$ the set of fractional ideals of $R$. A \emph{star operation} on $R$ is a map $\star:\insfracid(R)\longrightarrow\insfracid(R)$, $I\mapsto I^\star$ such that, for every $I,J\in\insfracid(R)$ and $x\in K$,
\begin{enumerate}[(a)]
\item $I\subseteq I^\star$;
\item $(I^\star)^\star=I^\star$;
\item if $I\subseteq J$, then $I^\star\subseteq J^\star$;
\item $R^\star=R$;
\item $(xI)^\star=x\cdot I^\star$.
\end{enumerate}
The set of star operations on $R$ is denoted by $\insstar(R)$. An ideal $I$ is a \emph{$\star$-ideal} if $I=I^\star$.

Similarly, a \emph{semistar operation} on $R$ is a map $\star:\inssubmod(R)\longrightarrow\inssubmod(R)$ (where $\inssubmod(R)$ is the set of $R$-submodules of $K$) satisfying the previous properties, except for $R^\star=R$; if $\star$ verifies also the latter, then it is said to be a \emph{(semi)star operation}. We indicate the sets of semistar and (semi)star operations by $\inssemistar(R)$ and $\inssmstar(R)$, respectively. A \emph{semiprime operation} is a map $c$, from the set of integral ideals of $R$ to itself, that satisfies the first four properties of star operations and, moreover, such that $xI^\star\subseteq(xI)^\star$ for every $x\in R$.

A star operation is said to be:
\begin{itemize}
\item \emph{of finite type} if, for every $I$,
\begin{equation*}
I^\star=\bigcup\{J^\star\mid J\subseteq I,~J\text{~is finitely generated}\};
\end{equation*}

\item \emph{semifinite} if any proper $\star$-ideal $I$ is contained in a prime $\star$-ideal;

\item \emph{stable} if $(I\cap J)^\star=I^\star\cap J^\star$ for all ideals $I,J$;

\item \emph{spectral} if it is in the form $I^\star=\bigcap\{IR_P\mid P\in\Delta\}$ for some $\Delta\subseteq\Spec(R)$; equivalently, $\star$ is spectral if and only if it is stable and semifinite \cite[Theorem 4]{anderson_overrings_1988};

\item \emph{endlich arithmetisch brauchbar} (\emph{eab} for short) if, for every nonzero finitely generated ideals $F,G,H$ such that $(FG)^\star\subseteq(FH)^\star$, we have $G^\star\subseteq H^\star$; if this property holds for arbitrary nonzero fractional ideals $G,H$ (but $F$ still finitely generated) then $\star$ is said to be \emph{arithmetisch brauchbar} (\emph{ab} for short);

\item \emph{Noetherian} if any set $\{I_\alpha\mid \alpha\in A\}$ of proper $\star$-ideals has a maximum, or equivalently if and only if every ascending chain of $\star$-closed ideals stabilizes. (More commonly, under this hypothesis $R$ is said to be \emph{$\star$-Noetherian} \cite{zaf_ACCstar}.)
\end{itemize}

The set of star operations has a natural order, such that $\star_1\leq\star_2$ if and only if $I^{\star_1}\subseteq I^{\star_2}$ for every ideal $I$, or equivalently if and only if every $\star_2$-closed ideal is also $\star_1$-closed. Under this order, $\insstar(R)$ becomes a complete lattice, where the minimum is the identity (usually denoted by $d$) and the maximum the $v$-operation (or \emph{divisorial closure}) $I\mapsto(R:(R:I))$.

If $R$ is an integral domain, an \emph{overring} of $R$ is a ring $T$ contained between $R$ and its quotient field $K$. A family $\Theta$ of overrings of $R$ is \emph{locally finite} (or \emph{of finite character}) if every $x\in K\setminus\{0\}$ (or, equivalently, every $x\in R\setminus\{0\}$) is a nonunit in only finitely many $T\in\Theta$. The ring $R$ itself is said to be of finite character if $\{R_M:M\in\Max(R)\}$ is a family of finite character.

A flat overring of $R$ is an overring that is flat as a $R$-module. If $T$ is a flat overring, then $(I_1\cap\cdots\cap I_n)T=I_1T\cap\cdots\cap I_nT$ for every $I_1,\ldots,I_n\in\inssubmod(R)$, and $(I:J)T=(IT:JT)$ for every $I,J\in\inssubmod(R)$ with $J$ finitely generated \cite[Theorem 7.4]{matsumura} (see also \cite[Proposition 2]{compact-intersections}).

\section{Extendable star operations}\label{sect:extension}
The starting point is the notion of localization of a star operation, originally defined in \cite{twostar}. We shall adopt a more general and more abstract approach.
\begin{defin}\label{defin:starloc}
Let $R$ be an integral domain and $T$ a flat overring of $R$. We say that a star operation $\star\in\insstar(R)$ is \emph{extendable to $T$} if the map
\begin{equation}
\begin{aligned}
\star_T\colon \insfracid(T) & \longrightarrow \insfracid(T)\\
IT & \longmapsto I^\star T
\end{aligned}
\end{equation}
is well-defined (where $I$ is a fractional ideal of $R$).
\end{defin}

\begin{oss}\label{oss:def}
~\begin{enumerate}
\item If $T$ is flat over $R$, then every fractional ideal of $T$ is an extension of a fractional ideal of $R$ (since, if $J$ is an integral ideal of $T$, $J=(J\cap R)T$); therefore, $\star_T$ is (potentially) defined on all of $\insfracid(T)$.
\item\label{oss:def:primi} If $T$ is flat over $R$ and $P$ is a prime of $R$ such that $PT\neq T$, then $PT$ is a prime ideal of $T$. Indeed, let $Q$ be a minimal prime of $PT$. By the previous point, $Q=(Q\cap R)T$; suppose $P\subsetneq Q\cap R$. By \cite[Theorem 2]{richamn_generalized-qr}, $T_Q=R_{Q\cap R}$, and thus $PT_Q$ is not minimal over $QT_Q=(Q\cap R)T_Q$, a contradiction. Note that the equality $T_Q=R_{Q\cap R}$ also shows that there is at most one $Q\in\Spec(T)$ over any $P\in\Spec(R)$.
\item When $T=S^{-1}R$ is a localization of $R$ and $\star$ is of finite type, Definition \ref{defin:starloc} coincides with the definition of $\star_S$ given in \cite[Proposition 2.4]{twostar}.
\item When $T=R_P$ for some $P\in\Spec(R)$, we will sometimes denote $\star_T$ with $\star_P$.
\end{enumerate}
\end{oss}

The following proposition shows the basic properties of extendability.
\begin{prop}\label{prop:starloc:basic}
Let $R$ be an integral domain, let $\star\in\insstar(R)$ and let $T$ be a flat overring of $R$.
\begin{enumerate}[(a)]
\item\label{prop:starloc:basic:star} If $\star$ is extendable to $T$, then $\star_T$ is a star operation.
\item\label{prop:starloc:basic:equiv} $\star$ is extendable to $T$ if and only if $I^\star T=J^\star T$ whenever $IT=JT$.
\item\label{prop:starloc:basic:id} The identity star operation $d$ is always extendable, and $d_T$ is the identity on $T$.
\item\label{prop:starloc:basic:ft} If $\star$ is of finite type, then it is extendable to $T$, and $\star_T$ is of finite type.
\end{enumerate}
\end{prop}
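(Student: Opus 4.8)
The plan is to verify the four items in order, using only the defining properties of star operations and the standard facts about flat overrings recalled in the preliminaries (in particular that flatness gives $(I\cap J)T = IT\cap JT$ and $(I:J)T=(IT:JT)$ for $J$ finitely generated, and that every fractional ideal of $T$ comes from one of $R$).

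For part \eqref{prop:starloc:basic:equiv} I would simply unwind the definition: $\star_T$ is well-defined exactly when the assignment $IT\mapsto I^\star T$ does not depend on the chosen representative $I$, which is literally the statement that $IT=JT$ implies $I^\star T=J^\star T$. This is essentially a tautology and costs one line. Part \eqref{prop:starloc:basic:id} is almost as quick: for $d$ we have $I^d=I$, so $d_T(IT)=IT$ is manifestly well-defined and equals the identity on $\insfracid(T)$.

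For part \eqref{prop:starloc:basic:star}, assuming $\star_T$ is well-defined, I would check axioms (a)--(e) one at a time by lifting to $R$. Given a fractional ideal of $T$, write it as $IT$ for some fractional ideal $I$ of $R$. Then: (a) $IT\subseteq I^\star T$ follows from $I\subseteq I^\star$; (b) $(\star_T(IT))^{\star_T} = (I^\star T)^{\star_T} = (I^\star)^\star T = I^\star T$, using that $I^\star T = \star_T(IT)$ is represented by the $R$-ideal $I^\star$ and idempotency of $\star$; (c) if $IT\subseteq JT$ then, intersecting, $IT = IT\cap JT = (I\cap J)T$ by flatness, so we may replace $I$ by $I\cap J\subseteq J$ and conclude $I^\star T\subseteq J^\star T$ from monotonicity of $\star$ — this is the one place the flat-intersection property is genuinely needed, and it is the mild subtlety of the argument; (d) $R^\star=R$ gives $\star_T(T)=\star_T(RT)=R^\star T = RT = T$; (e) for $x\in K$, $x(IT) = (xI)T$ and $(xI)^\star = xI^\star$, so $\star_T(x\cdot IT) = (xI)^\star T = xI^\star T = x\cdot\star_T(IT)$.

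For part \eqref{prop:starloc:basic:ft}, I would first show extendability: suppose $IT=JT$; I want $I^\star T = J^\star T$. By symmetry it suffices to show $I^\star T\subseteq J^\star T$, and since $\star$ is of finite type it suffices to show $F^\star T\subseteq J^\star T$ for every finitely generated $F\subseteq I$. For such $F$, flatness gives $FT = FT\cap JT = (FJ')T$-type manipulations; more directly, $FT\subseteq IT = JT$, and since $F$ is finitely generated there is a finitely generated $G\subseteq J$ with $FT\subseteq GT$ (write each generator of $F$ as a $T$-combination of generators of $J$, clearing the finitely many denominators), hence $F\subseteq (GT\cap R)$; applying $\star$ and using $(GT\cap R)^\star\subseteq\ldots$ — cleaner: from $F\subseteq GT$ with $F$ f.g., flatness of $T$ lets one find a f.g. $G'\subseteq J$ with $F\subseteq G'^\star$? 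I would instead argue via colon ideals: $F\subseteq GT$ does not immediately help, so the honest route is: $FT\subseteq GT$ for some f.g. $G\subseteq J$, whence $G^\star\supseteq G\supseteq$ nothing about $F$; the correct and standard move is that a finitely generated ideal extended to a flat overring is again finitely generated and $FT = \sum f_iT$ with each $f_i\in JT$, so $f_i = \sum g_{ij}t_{ij}$, $g_{ij}\in J$, and letting $G=(g_{ij})$ we get $F\subseteq GT\cap K$ — and here $GT\cap K$ need not be in $J$, so one uses instead that it suffices to prove the finite-type star operation statement for $\star_T$ directly once well-definedness is known. I expect this bookkeeping — producing, from a finitely generated sub-ideal of $I$, a finitely generated sub-ideal of $J$ controlling it after extension — to be the main (though still routine) obstacle; it is handled by the standard fact that finitely generated ideals extend to finitely generated ideals and by clearing denominators. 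Once extendability holds, to see $\star_T$ is of finite type: given $\mathfrak{a}\in\insfracid(T)$, write $\mathfrak{a}=IT$; then $\star_T(\mathfrak{a}) = I^\star T = \bigl(\bigcup\{F^\star : F\subseteq I\text{ f.g.}\}\bigr)T = \bigcup\{F^\star T : F\subseteq I\text{ f.g.}\} = \bigcup\{\star_T(FT): FT\subseteq\mathfrak{a}\text{ f.g.}\}$, using that extension commutes with the directed union and that each $FT$ is a finitely generated sub-$T$-ideal of $\mathfrak a$, and conversely every finitely generated sub-$T$-ideal of $\mathfrak a$ is dominated by some such $FT$; this gives the finite-type formula for $\star_T$.
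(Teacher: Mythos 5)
Parts (a), (b) and (c) of your proposal are correct and match the paper's treatment (the paper dismisses (a) and (c) as obvious and (b) as a reformulation of the definition); your derivation of monotonicity from $IT\subseteq JT\Rightarrow IT=(I\cap J)T$ via flatness is a legitimate way to make ``obvious'' precise. The closing argument that $\star_T$ is of finite type, via commutation of extension with directed unions, is also sound.

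The gap is in the extendability half of (d), and your own text signals it: after correctly reducing to showing $F^\star T\subseteq J^\star T$ for each finitely generated $F\subseteq I$, and correctly producing (by clearing denominators) a finitely generated $G\subseteq J$ with $FT\subseteq GT$, you never get from $FT\subseteq GT$ to $F^\star T\subseteq G^\star T$. The paragraph trails off into ``one uses instead that it suffices to prove the finite-type statement for $\star_T$ directly once well-definedness is known,'' which is circular, and the ``bookkeeping'' you single out as the remaining obstacle (producing $G$ from $F$) is not the obstacle --- you already did that part. The missing step is a colon-ideal computation combining the flatness identity $(A:B)T=(AT:BT)$ for $B$ finitely generated (which you quote in your opening paragraph but never apply) with the star-operation identity $(A^\star:B)=(A^\star:B^\star)$. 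Concretely: since $FT\subseteq GT\subseteq G^\star T$ and $F$ is finitely generated,
\begin{equation*}
1\in(G^\star T:FT)=(G^\star:F)T=(G^\star:F^\star)T\subseteq(G^\star T:F^\star T),
\end{equation*}
whence $F^\star T\subseteq G^\star T\subseteq J^\star T$. This is exactly the engine of the paper's proof, which runs the same computation in the form $(I^\star T:L^\star T)\supseteq(I^\star:L^\star)T=(I^\star:L)T=(I^\star T:LT)\ni 1$ for finitely generated $L\subseteq J$, after writing $J^\star T=\sum_L L^\star T$ and converting the colon into an intersection. With that one display inserted your argument closes and becomes essentially the paper's.
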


Note that, if $T$ is a localization of $R$, point \ref{prop:starloc:basic:ft} is proved in \cite[Proposition 2.4]{twostar}.

\begin{proof}
\ref{prop:starloc:basic:star} and \ref{prop:starloc:basic:id} are obvious, while \ref{prop:starloc:basic:equiv} is just a reformulation of Definition \ref{defin:starloc}.

For \ref{prop:starloc:basic:ft}, by symmetry it is enough to show that $J^\star T\subseteq I^\star T$, or equivalently that $1\in(I^\star T:J^\star T)$. Since $\star$ is of finite type,
\begin{equation*}
(I^\star T:J^\star T)=\left(I^\star T:\left(\sum_{\substack{L\subseteq J\\ L\text{~finitely generated}}}L^\star\right)T\right)= \left(I^\star T:\sum_{\substack{L\subseteq J\\ L\text{~fin. gen.}}}L^\star T\right)=\eql
=\bigcap_{\substack{L\subseteq J\\ L\text{~fin. gen.}}}(I^\star T:L^\star T)\supseteq\bigcap_{\substack{L\subseteq J\\ L\text{~fin. gen.}}}(I^\star:L^\star)T.
\end{equation*}
By properties of star operations, $(I^\star:L^\star)=(I^\star:L)$; since $L$ is finitely generated and $T$ is flat, it follows that, for every $L$,
\begin{equation*}
(I^\star:L^\star)T=(I^\star:L)T=(I^\star T:LT)
\end{equation*}
which contains 1 since $LT\subseteq JT=IT\subseteq I^\star T$. Hence, $1\in(I^\star T:J^\star T)$, as requested.
\end{proof}

\begin{ex}\label{ex:ad}
Not every star operation is extendable: let $R$ be an almost Dedekind domain which is not Dedekind (i.e., a one-dimensional non-Noetherian domain such that $R_M$ is a discrete valuation ring for every $M\in\Max(R)$), and let $P$ be a non-finitely generated prime ideal of $R$. Then $P$ is not divisorial \cite[Lemma 4.1.8]{fontana_libro}, and thus the $v$-operation is not extendable to $R_P$, since otherwise $(PR_P)^{v_P}=P^vR_P=R_P$, while the unique star operation on $R_P$ is the identity.
\end{ex}

Beside being of finite type, extension preserves the main properties of a star operation.
\begin{prop}\label{prop:estensione-prop}
Let $R$ be a domain and $T$ be a flat overring of $R$; suppose $\star\in\insstar(R)$ is extendable to $T$. If $\star$ is stable (respectively, spectral, Noetherian) then so is $\star_T$.
\end{prop}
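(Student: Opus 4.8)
\emph{Strategy.} The plan is to treat the three cases separately; stability and the Noetherian property fall out quickly from flatness, and spectrality is then deduced from stability together with the equivalence ``spectral $\iff$ stable and semifinite'' recorded in the Preliminaries.

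\emph{Stable case.} Write an arbitrary fractional ideal of $T$ as $IT$ with $I$ a fractional ideal of $R$ (Remark \ref{oss:def}). Since $T$ is flat, $IT\cap JT=(I\cap J)T$, whence
\[
(IT\cap JT)^{\star_T}=\big((I\cap J)T\big)^{\star_T}=(I\cap J)^\star T=(I^\star\cap J^\star)T=I^\star T\cap J^\star T=(IT)^{\star_T}\cap(JT)^{\star_T},
\]
using stability of $\star$ in the middle and flatness again at the end; so $\star_T$ is stable.

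\emph{Noetherian case.} Use the ascending-chain characterization. Given $\star_T$-closed ideals $A_1\subseteq A_2\subseteq\cdots$ of $T$, set $J_n:=(A_n\cap R)^\star$, a $\star$-closed ideal of $R$. Since $A_n=(A_n\cap R)T$ by flatness and $A_n$ is $\star_T$-closed, $A_n=(A_n\cap R)^\star T=J_nT$. The chain $(A_n\cap R)$ ascends, hence so does $(J_n)$; by the Noetherian hypothesis on $\star$ it stabilizes, and therefore so does $(A_n)=(J_nT)$.

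\emph{Spectral case.} As $\star$ is spectral it is stable, so $\star_T$ is stable by the first case; it suffices to prove $\star_T$ semifinite. Let $B$ be a proper $\star_T$-closed ideal and put $I:=(B\cap R)^\star$, a proper $\star$-closed ideal of $R$ with $IT=B$. Since $IT\neq T$, $T/IT$ is nonzero, so by Remark \ref{oss:def} there is $P_*\in\Spec(R)$ with $I\subseteq P_*$ and $P_*T\neq T$; pick a prime $\mathfrak p$ of $R$ minimal over $I$ with $\mathfrak p\subseteq P_*$. Then $\mathfrak p T\subseteq P_*T\subsetneq T$, so $\mathfrak p T$ is a prime ideal of $T$ (Remark \ref{oss:def}). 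Moreover $\mathfrak p$ is a $\star$-ideal: since $\star$ is stable, a prime minimal over a proper $\star$-closed ideal is again $\star$-closed. Hence $(\mathfrak p T)^{\star_T}=\mathfrak p^\star T=\mathfrak p T$, and $B=IT\subseteq\mathfrak p T$, so $B$ lies in a prime $\star_T$-ideal. Thus $\star_T$ is semifinite, and therefore spectral.

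\emph{Main obstacle.} The delicate point is the assertion used in the spectral case: for a stable star operation, primes minimal over a proper closed ideal are closed. I expect to settle this either by citation or via the localizing system $\{L:L^\star=R\}$ attached to $\star$, using that $\mathfrak p^\star\in\{\mathfrak p,R\}$ for any prime $\mathfrak p$, so the content is to rule out $\mathfrak p^\star=R$ when $\mathfrak p$ is minimal over a proper $\star$-ideal. (Equivalently one can bypass semifiniteness by fixing the representation $\star=\star_\Delta$ with $\Delta$ the set of prime $\star$-ideals and showing directly that $\star_T=\star_{\Delta'}$ for $\Delta':=\{PT:P\in\Delta,\ PT\neq T\}$; the nontrivial inclusion there reduces to the same fact about minimal primes.)
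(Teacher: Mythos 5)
Your stable and Noetherian cases are correct and follow essentially the same route as the paper's own proof, so there is nothing to add there. The problem is the spectral case, and it sits exactly where you placed your ``main obstacle'': the assertion that, for a stable star operation, a prime minimal over a proper $\star$-closed ideal is again $\star$-closed is \emph{false}, so it cannot be settled by any argument using only stability (in particular not by the localizing-system dichotomy $\mathfrak p^\star\in\{\mathfrak p,R\}$, which is correct but does not rule out the bad case). If the assertion were true, every stable star operation would be semifinite, hence spectral by the equivalence of \cite[Theorem 4]{anderson_overrings_1988} quoted in the Preliminaries; but stable non-spectral operations exist. A concrete counterexample is already implicit in the paper: let $V$ be a rank-one non-discrete valuation domain with maximal ideal $M$. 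Then the $v$-operation on $V$ is stable (Lemma \ref{lemma:intersez-valuation}), a principal ideal $xV$ ($x$ a nonunit) is a proper divisorial ideal whose unique minimal prime is $M$, and yet $(V:M)=V$, so $M^v=(V:(V:M))=(V:V)=V\neq M$. Thus the case $\mathfrak p^\star=R$ genuinely occurs for a minimal prime of a proper $\star$-closed ideal, and your chain ``$B\subseteq\mathfrak pT$ with $\mathfrak pT$ a prime $\star_T$-ideal'' breaks down.

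To repair the argument you must use the representation $\star=\star_\Delta$ in an essential way, not merely the stability it implies. This is what the paper does: writing $I=I^\star=\bigcap_{P\in\Delta}(IR_P\cap R_P)$ and using $1\notin I$, it extracts a specific $P\in\Delta$ with $ITR_P\neq TR_P$, takes a prime $Q$ of $TR_P$ containing $ITR_P$, and shows that $Q_0:=Q\cap T$ is a prime $\star_T$-ideal containing $IT$. The crucial point is that $Q_0\cap R$ is contained in the chosen $P\in\Delta$, and any prime $L$ contained in a member of $\Delta$ satisfies $L^\star\subseteq LR_P\cap R=L$, hence is automatically $\star_\Delta$-closed. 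Your construction instead selects a minimal prime $\mathfrak p\subseteq P_*$ where $P_*$ is only known to survive in $T$; nothing guarantees that $\mathfrak p$ (or $P_*$) lies under a member of $\Delta$, and that is precisely the information needed. The same missing ingredient undermines your parenthetical alternative ($\star_T=\star_{\Delta'}$ with $\Delta'=\{PT:P\in\Delta,\ PT\neq T\}$), as you yourself note.
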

\begin{proof}
Suppose $\star$ is stable, and let $I_1:=J_1T$, $I_2:=J_2T$ be ideals of $T$, where $J_1$ and $J_2$ are ideals of $R$. Then,
\begin{equation*}
\begin{array}{rcl}
(I_1\cap I_2)^{\star_T} & = & (J_1T\cap J_2T)^{\star_T}=[(J_1\cap J_2)T]^{\star_T}=\\
& = & (J_1\cap J_2)^\star T=(J_1^\star\cap J_2^\star)T=J_1^\star T\cap J_2^\star T=I_1^{\star_T}\cap I_2^{\star_T}
\end{array}
\end{equation*}
and thus $\star_T$ is stable.

If $\star$ is spectral, it is stable, and thus so is $\star_T$. Let now $I$ be a proper $\star_T$-closed ideal of $T$, and let $J:=I\cap R$; then, $JT=(I\cap R)T=I$, and thus $J^\star\subseteq I^{\star_T}\cap R=I\cap R=J$, so that $J$ is a $\star$-ideal. By definition, there is a $\Delta\subseteq\Spec(R)$ such that $\star=\star_\Delta$; hence,
\begin{equation*}
J=J^\star=\bigcap_{P\in\Delta}JR_P=\bigcap_{P\in\Delta}(I\cap R)R_P=\bigcap_{P\in\Delta}IR_P\cap R_P.
\end{equation*}
In particular, there is a $P\in\Delta$ such that $1\notin IR_P=ITR_P$; hence, there is a $Q\in\Spec(TR_P)$ such that $ITR_P\subseteq Q$. We claim that $Q_0:=Q\cap T$ is a prime $\star_T$-ideal containing $I$. Indeed, $I\subseteq ITR_P\cap T\subseteq Q\cap T=Q_0$; moreover, since $Q\cap R=Q_0\cap R\subseteq P$, $Q_0=LT$ for some prime ideal $L$ of $T$ contained in $P$ (Remark \ref{oss:def}\ref{oss:def:primi}), and thus
\begin{equation*}
Q_0^{\star_T}=L^\star T\subseteq (LR_P\cap R)T=LT=Q_0.
\end{equation*}
Therefore, $\star_T$ is also semifinite, and by \cite[Theorem 4]{anderson_overrings_1988} it is spectral.

Suppose $\star$ is Noetherian, and let $\{I_\alpha T\mid\alpha\in A\}$ be an ascending chain of $\star_T$-ideals. Then, $\{I_\alpha^\star\mid\alpha\in A\}$ is an ascending chain of $\star$-ideals, which has to stabilize at $I_{\overline{\alpha}}$. Hence, the original chain stabilizes at $I_{\overline{\alpha}}T$, and $\star_T$ is Noetherian.
\end{proof}

Extendability works well with the order structure of $\insstar(R)$.
\begin{prop}\label{prop:starloc:ordine}
Let $R$ be an integral domain and $T$ be a flat overring of $R$. Let $\star_1,\star_2,\{\star_\lambda\mid\lambda\in\Lambda\}$ be star operations that are extendable to $T$.
\begin{enumerate}[(a)]
\item\label{prop:starloc:ordine:leq} If $\star_1\leq\star_2\in\insstar(R)$, then $(\star_1)_T\leq(\star_2)_T$.
\item\label{prop:starloc:ordine:wedge} $\star_1\wedge\star_2$ is extendable to $T$ and $(\star_1\wedge\star_2)_T=(\star_1)_T\wedge(\star_2)_T$.
\item\label{prop:starloc:ordine:supft} If each $\star_\lambda$ is of finite type, then $\sup_\lambda\star_\lambda$ is extendable to $T$ and $(\sup_\lambda\star_\lambda)_T=\sup_\lambda(\star_\lambda)_T$.
\end{enumerate}
\end{prop}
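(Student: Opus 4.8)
The plan is to reduce parts~\ref{prop:starloc:ordine:wedge} and~\ref{prop:starloc:ordine:supft} to the criterion of Proposition~\ref{prop:starloc:basic}\ref{prop:starloc:basic:equiv} ($\star$ is extendable to $T$ if and only if $I^\star T=J^\star T$ whenever $IT=JT$), combined with explicit descriptions of the meet of two star operations and of the supremum of a family of finite-type star operations, and with the flatness of $T$, which allows one to commute extension past the finite intersections and directed unions that will appear. Part~\ref{prop:starloc:ordine:leq} is immediate: every fractional ideal of $T$ has the form $IT$, and $(IT)^{(\star_i)_T}=I^{\star_i}T$ by definition, so $I^{\star_1}\subseteq I^{\star_2}$ gives $I^{\star_1}T\subseteq I^{\star_2}T$, i.e.\ $(\star_1)_T\le(\star_2)_T$.

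For part~\ref{prop:starloc:ordine:wedge}, recall that the meet of two star operations is computed pointwise, $I^{\star_1\wedge\star_2}=I^{\star_1}\cap I^{\star_2}$: the map $I\mapsto I^{\star_1}\cap I^{\star_2}$ satisfies all the star-operation axioms (idempotency because $(I^{\star_1}\cap I^{\star_2})^{\star_i}\subseteq(I^{\star_i})^{\star_i}=I^{\star_i}$) and is visibly the largest star operation below both $\star_1$ and $\star_2$. Since $T$ is flat, $(I^{\star_1}\cap I^{\star_2})T=I^{\star_1}T\cap I^{\star_2}T$; applying the criterion to $\star_1$ and to $\star_2$ shows that $IT=JT$ forces $(I^{\star_1}\cap I^{\star_2})T=I^{\star_1}T\cap I^{\star_2}T=J^{\star_1}T\cap J^{\star_2}T=(J^{\star_1}\cap J^{\star_2})T$, so $\star_1\wedge\star_2$ is extendable to $T$. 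Finally, for $IT\in\insfracid(T)$, $(IT)^{(\star_1\wedge\star_2)_T}=(I^{\star_1}\cap I^{\star_2})T=I^{\star_1}T\cap I^{\star_2}T=(IT)^{(\star_1)_T}\cap(IT)^{(\star_2)_T}$, which by the pointwise formula for the meet on $T$ equals $(IT)^{(\star_1)_T\wedge(\star_2)_T}$; hence $(\star_1\wedge\star_2)_T=(\star_1)_T\wedge(\star_2)_T$.

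For part~\ref{prop:starloc:ordine:supft}, set $\star:=\sup_\lambda\star_\lambda$. First, $\star$ is of finite type: the operation $I\mapsto\bigcup\{F^\star\mid F\subseteq I\text{ finitely generated}\}$ is a finite-type star operation lying above every $\star_\lambda$ (as $\star_\lambda$ is of finite type and $\le\star$) and below $\star$, hence equal to $\star$. By Proposition~\ref{prop:starloc:basic}\ref{prop:starloc:basic:ft}, $\star$ is then extendable to $T$ (as is each $\star_\lambda$), and part~\ref{prop:starloc:ordine:leq} gives $\sup_\lambda(\star_\lambda)_T\le\star_T$. For the opposite inequality, use the standard description $I^\star=\bigcup_{n\ge0}I_n$ with $I_0:=I$ and $I_{n+1}:=\bigcup_\lambda I_n^{\star_\lambda}$ (this is a star operation because, each $\star_\lambda$ being of finite type, any finitely generated subideal of $\bigcup_nI_n$ lies in some $I_n$, and it is the least star operation above all the $\star_\lambda$). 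The ideals of $T$ closed for $\sup_\lambda(\star_\lambda)_T$ are exactly those closed for every $(\star_\lambda)_T$; so suppose $J=IT$ satisfies $I^{\star_\lambda}T=IT$ for all $\lambda$. An induction on $n$ gives $I_nT=IT$: for the step, $I_nT=IT$ forces $I_n^{\star_\lambda}T=I^{\star_\lambda}T=IT$ by the criterion applied to each $\star_\lambda$, so $I_{n+1}T=\bigcup_\lambda I_n^{\star_\lambda}T=IT$. As extension commutes with the directed union, $I^\star T=\bigcup_nI_nT=IT$, so $J$ is $\star_T$-closed; thus $\star_T\le\sup_\lambda(\star_\lambda)_T$, and equality follows.

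The axiom checks for the pointwise meet and for $\bigcup_nI_n$ are routine. The main obstacle, I expect, is the point that a supremum of finite-type star operations remains of finite type, since this is exactly what makes Proposition~\ref{prop:starloc:basic}\ref{prop:starloc:basic:ft} applicable and hence $\star$ extendable; the other somewhat delicate point is the inductive step in part~\ref{prop:starloc:ordine:supft}, which uses the flatness of $T$ twice — to push $(-)T$ through the union $\bigcup_nI_n$ and, through the criterion, to rewrite $I_n^{\star_\lambda}T$ as $I^{\star_\lambda}T$.
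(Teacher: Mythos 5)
Your parts (a) and (b) are correct and essentially identical to the paper's argument (the paper is slightly terser in (b), leaving the extendability of $\star_1\wedge\star_2$ implicit in the same computation you spell out). For part (c) you take a genuinely different route. The paper cites Anderson--Anderson for both the fact that a supremum of finite-type star operations is of finite type and for the description $I^{\sup_\lambda\star_\lambda}=\sum I^{\star_{i_1}\circ\cdots\circ\star_{i_n}}$ over finite strings, and then shows by induction that extension commutes with finite compositions, which yields the formula for $(IT)^{\sup_\lambda(\star_\lambda)_T}$ directly. You instead reprove the finite-type fact (correctly, via the finite-type modification $\star_f$), use part (a) for the inequality $\sup_\lambda(\star_\lambda)_T\leq(\sup_\lambda\star_\lambda)_T$, and obtain the reverse inequality from the characterization of the supremum by its closed ideals together with an iterative closure $\bigcup_n I_n$. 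This is more self-contained and arguably more transparent, at the cost of re-deriving the standard lattice facts the paper outsources. One small repair is needed: for an arbitrary (non-directed) family $\Lambda$, the set $\bigcup_\lambda I_n^{\star_\lambda}$ is generally not an ideal, so you should define $I_{n+1}:=\sum_\lambda I_n^{\star_\lambda}$ (equivalently, the ideal generated by that union); with this change your verification that $I\mapsto\bigcup_n I_n$ is the least star operation above all the $\star_\lambda$, and your inductive step $I_{n+1}T=\bigl(\sum_\lambda I_n^{\star_\lambda}\bigr)T=\sum_\lambda I_n^{\star_\lambda}T=IT$, go through verbatim, since extension commutes with sums and with the directed union $\bigcup_n I_n$ without any appeal to flatness.
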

\begin{proof}
\ref{prop:starloc:ordine:leq} If $\star_1\leq\star_2$, then $I^{\star_1}\subseteq I^{\star_2}$ for every fractional ideal $I$, and thus $(I^{\star_1}T)\subseteq(I^{\star_2}T)$. Using the definition of $\star_T$, we get $(\star_1)_T\leq(\star_2)_T$.

\ref{prop:starloc:ordine:wedge} Let $I$ be an ideal of $R$. By definition, $I^{\star_1\wedge\star_2}=I^{\star_1}\cap I^{\star_2}$, so that
\begin{equation*}
(IT)^{(\star_1\wedge\star_2)_T}=(I^{\star_1\wedge\star_2})T=(I^{\star_1}\cap I^{\star_2})T=\eql
=I^{\star_1}T\cap I^{\star_2}T=(IT)^{(\star_1)_T}\cap (IT)^{(\star_2)_T}=(IT)^{(\star_1)_T\wedge(\star_2)_T}
\end{equation*}
and thus $(\star_1\wedge\star_2)_T=(\star_1)_T\wedge(\star_2)_T$.

\ref{prop:starloc:ordine:supft} Let $\star:=\sup_\lambda\star_\lambda$. Since each $\star_\lambda$ is of finite type, so is $\star$ \cite[p.1628]{anderson_examples}, and thus $\star$ is extendable to $T$ by Proposition \ref{prop:starloc:basic}\ref{prop:starloc:basic:ft}. Moreover, again by \cite[p.1628]{anderson_examples}, $I^\star=\sum I^{\star_1\circ\cdots\circ\star_n}$, as $(\star_1,\ldots,\star_n)$ ranges among the finite strings of elements of $\{\star_\lambda\mid\lambda\in\Lambda\}$ (here $\star_1\circ\cdots\circ\star_n$ indicates simply the composition of $\star_1,\ldots,\star_n$); therefore,
\begin{equation*}
I^\star T=\left(\sum I^{\star_1\circ\cdots\circ\star_n}\right)T=\sum I^{\star_1\circ\cdots\circ\star_n}T.
\end{equation*}
We claim that $I^{\star_1\circ\cdots\circ\star_n}T=(IT)^{(\star_1)_T\circ\cdots\circ(\star_n)_T}$; we proceed by induction. The case $n=1$ is just the definition of the extension; suppose the claim holds for $m<n$. Then,
\begin{equation*}
I^{\star_1\circ\cdots\circ\star_n}T=(I^{\star_1})^{\star_2\circ\cdots\circ\star_n}T= (I^{\star_1}T)^{(\star_2)_T\circ\cdots\circ(\star_n)_T}=(IT)^{(\star_1)_T\circ\cdots\circ(\star_n)_T}
\end{equation*}
as claimed. Thus,
\begin{equation*}
I^\star T=\sum(IT)^{(\star_1)_T\circ\cdots\circ(\star_n)_T}=(IT)^{\sup_\lambda(\star_\lambda)_T}
\end{equation*}
the last equality coming from \cite[p.1628]{anderson_examples} and Proposition \ref{prop:starloc:basic}\ref{prop:starloc:basic:ft}. Hence, $\star=\sup_\lambda(\star_\lambda)_T$.
\end{proof}

Extendability is also transitive:
\begin{prop}\label{prop:ext-transitivo}
Let $R$ be a domain and $T_1\subseteq T_2$ be two flat overrings of $R$. If $\star\in\insstar(R)$ is extendable to $T_1$ and $\star_{T_1}$ is extendable to $T_2$, then $\star$ is extendable to $T_2$, and $\star_{T_2}=(\star_{T_1})_{T_2}$.
\end{prop}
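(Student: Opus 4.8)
The plan is to derive the whole statement from a single chain of equalities obtained by unwinding Definition~\ref{defin:starloc} twice. Before doing so, I would observe that the hypothesis ``$\star_{T_1}$ is extendable to $T_2$'' tacitly requires $T_2$ to be a flat overring of $T_1$, and that this is in any case automatic: for every prime $N$ of $T_2$, flatness of $T_2$ over $R$ gives $(T_2)_N=R_{N\cap R}$, while flatness of $T_1$ over $R$ gives $(T_1)_{N\cap T_1}=R_{N\cap R}$, so that $(T_2)_N=(T_1)_{N\cap T_1}$ and $T_2$ is flat over $T_1$ by \cite[Theorem 2]{richamn_generalized-qr} applied with $T_1$ as base ring. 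Consequently $\star_{T_1}\in\insstar(T_1)$ by Proposition~\ref{prop:starloc:basic}\ref{prop:starloc:basic:star}, $(\star_{T_1})_{T_2}$ is a well-defined star operation on $T_2$ by the same result applied over $T_1$, and every fractional ideal of $T_2$ is of the form $IT_2$ for some $I\in\insfracid(R)$, in which case $IT_2=(IT_1)T_2$ with $IT_1\in\insfracid(T_1)$ (see Remark~\ref{oss:def}).

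Next I would record the key identity: for every $I\in\insfracid(R)$,
\begin{equation*}
I^\star T_2=\bigl(I^\star T_1\bigr)T_2=(IT_1)^{\star_{T_1}}T_2=\bigl((IT_1)T_2\bigr)^{(\star_{T_1})_{T_2}}=(IT_2)^{(\star_{T_1})_{T_2}}.
\end{equation*}
Here the first equality is just $T_1\subseteq T_2$, the second is the definition of $\star_{T_1}$ applied to the fractional ideal $I$ of $R$ (legitimate because $\star$ is extendable to $T_1$), the third is the definition of $(\star_{T_1})_{T_2}$ applied to the fractional ideal $IT_1$ of $T_1$ (legitimate because $\star_{T_1}$ is extendable to $T_2$), and the last is $(IT_1)T_2=IT_2$. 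Note that no finite-type assumption enters anywhere.

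From this identity the two assertions drop out at once. For extendability: if $I,I'\in\insfracid(R)$ satisfy $IT_2=I'T_2$, then, since $(\star_{T_1})_{T_2}$ is a well-defined operation on $\insfracid(T_2)$, the identity gives $I^\star T_2=(IT_2)^{(\star_{T_1})_{T_2}}=(I'T_2)^{(\star_{T_1})_{T_2}}=(I')^\star T_2$, so $\star$ is extendable to $T_2$ by Proposition~\ref{prop:starloc:basic}\ref{prop:starloc:basic:equiv}. And then the identity reads exactly $(IT_2)^{\star_{T_2}}=(IT_2)^{(\star_{T_1})_{T_2}}$ for every $I\in\insfracid(R)$, hence for every fractional ideal of $T_2$, so $\star_{T_2}=(\star_{T_1})_{T_2}$. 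The computation is purely formal; the only step calling for any thought is the opening remark that $T_2$ is flat over $T_1$ --- which is what guarantees that $(\star_{T_1})_{T_2}$ is defined on all of $\insfracid(T_2)$ and that each fractional ideal of $T_2$ descends to one of $R$ along $R\subseteq T_1\subseteq T_2$ --- and I expect that to be the main (and essentially the only) obstacle.
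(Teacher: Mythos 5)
Your proof is correct and follows essentially the same route as the paper: the paper's argument is exactly the chain $I^\star T_2=(I^\star T_1)T_2=(IT_1)^{\star_{T_1}}T_2=(IT_1T_2)^{(\star_{T_1})_{T_2}}=(IT_2)^{(\star_{T_1})_{T_2}}$, preceded by the observation that $T_2$ is flat over $T_1$. Your only addition is an explicit justification of that flatness via Richman's theorem, which the paper simply asserts.
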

\begin{proof}
Note first that if $T_2$ is flat over $R$ then it is flat over $T_1$, and thus it makes sense to speak of the extendability of $\star_{T_1}$. For every ideal $I$ of $R$, we have
\begin{equation*}
I^\star T_2=(I^\star T_1)T_2=(IT_1)^{\star_{T_1}}T_2=(IT_1T_2)^{(\star_{T_1})_{T_2}}=(IT_2)^{(\star_{T_1})_{T_2}}
\end{equation*}
and thus if $IT_2=JT_2$ then $I^\star T_2=J^\star T_2$, so that $\star$ is extendable to $T_2$. The previous calculation also shows that $\star_{T_2}=(\star_{T_1})_{T_2}$.
\end{proof}

\begin{prop}
Let $R$ be an integral domain and $T$ be a flat overring of $R$. Let $\Delta:=\{M\cap R\mid M\in\Max(T)\}$. If $\star\in\insstar(R)$ is extendable to $R_P$, for every $P\in\Delta$, then it is extendable to $T$.
\end{prop}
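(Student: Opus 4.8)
The plan is to verify the criterion of Proposition \ref{prop:starloc:basic}\ref{prop:starloc:basic:equiv}, that is, to show that $I^\star T=J^\star T$ whenever $IT=JT$, where $I,J$ are fractional ideals of $R$. The strategy is to localize at the maximal ideals of $T$, where flatness identifies $T_M$ with a localization of $R$ appearing in $\Delta$, apply the hypothesis there, and glue back via the usual local--global principle.

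First I would record the two structural ingredients. For $M\in\Max(T)$ set $P_M:=M\cap R\in\Delta$. Since $T$ is flat over $R$, Richman's theorem \cite[Theorem 2]{richamn_generalized-qr} (the same input used in Remark \ref{oss:def}\ref{oss:def:primi}) gives $T_M=R_{P_M}$. Moreover, any $T$-submodule $L$ of $K$ satisfies $L=\bigcap_{M\in\Max(T)}LT_M$: the inclusion $\subseteq$ is clear, and if $x$ lies in every $LT_M$, then the conductor $\{t\in T\mid tx\in L\}$ is not contained in any maximal ideal of $T$, hence equals $T$, so $x\in L$. In particular, since $T$ is flat the module $I^\star T$ is a fractional ideal of $T$ (if $dI^\star\subseteq R$ with $d\neq 0$, then $dI^\star T\subseteq T$), so
\begin{equation*}
I^\star T=\bigcap_{M\in\Max(T)}(I^\star T)T_M=\bigcap_{M\in\Max(T)}I^\star R_{P_M},
\end{equation*}
and likewise $J^\star T=\bigcap_{M\in\Max(T)}J^\star R_{P_M}$.

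Now assume $IT=JT$. Localizing at $M\in\Max(T)$ and using $T_M=R_{P_M}$ gives $IR_{P_M}=IT_M=JT_M=JR_{P_M}$. By hypothesis $\star$ is extendable to $R_{P_M}$, so Proposition \ref{prop:starloc:basic}\ref{prop:starloc:basic:equiv}, applied to the flat overring $R_{P_M}$, yields $I^\star R_{P_M}=J^\star R_{P_M}$. Intersecting over all $M\in\Max(T)$ and invoking the displayed formulas,
\begin{equation*}
I^\star T=\bigcap_{M\in\Max(T)}I^\star R_{P_M}=\bigcap_{M\in\Max(T)}J^\star R_{P_M}=J^\star T,
\end{equation*}
so by Proposition \ref{prop:starloc:basic}\ref{prop:starloc:basic:equiv} again, $\star$ is extendable to $T$.

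The only steps carrying genuine content are the identification $T_M=R_{M\cap R}$, which is precisely the theorem of Richman already relied upon earlier in the paper, and the local--global description $L=\bigcap_M LT_M$ for $T$-submodules of $K$; once $I^\star T$ is known to be a fractional ideal of $T$ (a consequence of flatness), both are routine, and the remainder is bookkeeping with the definition of extendability. I do not anticipate a real obstacle here.
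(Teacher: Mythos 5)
Your proof is correct and follows essentially the same route as the paper: identify $T_M=R_{M\cap R}$ via Richman's theorem, localize the equality $IT=JT$ to each $R_P$ with $P\in\Delta$, apply the extendability hypothesis there, and glue via $I^\star T=\bigcap_{P\in\Delta}I^\star R_P$. You are merely more explicit than the paper about the local--global principle $L=\bigcap_M LT_M$ and about why $IT=JT$ localizes to $IR_{P}=JR_{P}$, both of which the paper leaves implicit.
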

\begin{proof}
Let $I,J$ be ideals of $R$ such that $IT=JT$. Let $P\in\Delta$ and let $M$ be the (necessarily unique -- see Remark \ref{oss:def}(\ref{oss:def:primi})) maximal ideal of $T$ such that $M\cap R=P$. Then, $T_M=R_P$, and since $\star$ is extendable to $R_P$ we have $I^\star R_P=J^\star R_P$. It follows that 
\begin{equation*}
I^\star T=\bigcap_{P\in\Delta}I^\star R_P=\bigcap_{P\in\Delta}J^\star R_P=J^\star T,
\end{equation*}
and thus $\star$ is extendable to $T$.
\end{proof}

\begin{cor}\label{cor:def-extstar}
Let $R$ be a domain, and let $\star\in\insstar(R)$. The following are equivalent:
\begin{enumerate}[(i)]
\item $\star$ is extendable to $R_P$, for every $P\in\Spec(R)$;
\item $\star$ is extendable to every flat overring of $R$.
\end{enumerate}
\end{cor}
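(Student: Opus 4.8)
The implication (ii) $\Rightarrow$ (i) will be immediate: for every $P\in\Spec(R)$ the localization $R_P$ is a flat overring of $R$ (it is a localization, hence flat), so extendability to all flat overrings specializes to extendability to each $R_P$.

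For the converse (i) $\Rightarrow$ (ii), the plan is simply to feed the hypothesis into the previous proposition. Given an arbitrary flat overring $T$ of $R$, I would set $\Delta:=\{M\cap R\mid M\in\Max(T)\}$; since $\Delta\subseteq\Spec(R)$, assumption (i) says that $\star$ is extendable to $R_P$ for every $P\in\Delta$, and then the previous proposition applies verbatim and gives that $\star$ is extendable to $T$. So both directions are one-line arguments, and nothing beyond what is already in the excerpt is needed.

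Accordingly, I do not expect any genuine obstacle here: the corollary is a repackaging of the previous proposition. The only points worth keeping straight are the ones that were already settled earlier — namely, that for a flat overring $T$ one has $T_M=R_{M\cap R}$ for each $M\in\Max(T)$ (Remark \ref{oss:def}\ref{oss:def:primi}), so that $T=\bigcap_{M\in\Max(T)}T_M=\bigcap_{P\in\Delta}R_P$ and the behaviour of $\star$ on $T$ is controlled by its behaviour on the localizations $R_P$ with $P\in\Delta$. If one wanted the corollary to stand a little more self-containedly, I would inline this reduction: intersect $I^\star T=\bigcap_{P\in\Delta}I^\star R_P$ and use that $IT=JT$ forces $IR_P=JR_P$ for each $P\in\Delta$, whence $I^\star R_P=J^\star R_P$ by (i) and therefore $I^\star T=J^\star T$; but this is exactly the computation carried out in the previous proposition, so invoking it directly is cleaner.
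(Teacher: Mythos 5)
Your proposal is correct and matches the paper's intended argument: the corollary is stated without proof precisely because it follows from the immediately preceding proposition applied to $\Delta=\{M\cap R\mid M\in\Max(T)\}$, together with the trivial observation that each $R_P$ is a flat overring.
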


Note that condition (i) of the above corollary cannot be replaced by the version that considers only maximal ideals of $T$: indeed, if $(R,M)$ is local, then clearly every star operation is extendable to $R_M$, but it would be implausible that every star operation is extendable to every localization. We can build an explicit counterexample tweaking slightly \cite[Remark 2.5(3)]{twostar}. Let $R:=\insZ_{p\insZ}+X\insQ(\sqrt{2})[[X]]$ (where $p$ is a prime number). Then, $R$ is a two-dimensional local domain, with maximal ideal $M:=p\insZ_{p\insZ}+X\insQ(\sqrt{2})[[X]]$; let $P:=X\insQ(\sqrt{2})[[X]]$. We claim that the $v$-operation is not extendable to $R_P=\insQ+P$. Let $A:=X(\insQ+P)$ and $B:=XR$: then, $AR_P=BR_P=A$, but $A^vR_P=P$ while $B^vR_P=BR_P\neq P$.

\section{Jaffard families}\label{sect:Jaffard}
The concept of Jaffard family was introduced and studied in \cite[Section 6.3]{fontana_factoring}.
\begin{defin}
Let $R$ be a domain and $\Theta$ be a set of overrings of $R$ such that the quotient field of $R$ is not in $\Theta$. We say that $\Theta$ is a \emph{Jaffard family on $R$} if, for every integral ideal $I$ of $R$,
\begin{itemize}
\item $R=\bigcap_{T\in\Theta}T$;
\item $\Theta$ is locally finite;
\item $I=\prod_{T\in\Theta}(IT\cap R)$;
\item if $T\neq S$ are in $\Theta$, then $(IT\cap R)+(IS\cap R)=R$.
\end{itemize}

We say that an overring $T$ of $R$ is a \emph{Jaffard overring} of $R$ if $T$ belongs to a Jaffard family of $R$.
\end{defin}

Note that, by the second axiom, if $I\neq(0)$ then $IT=T$ for all but finitely many $T\in\Theta$, so that the product $I=\prod_{T\in\Theta}(IT\cap R)$ is finite.

The next propositions collect the properties of Jaffard families that we will be using.
\begin{prop}[{\protect\cite[Theorem 6.3.1]{fontana_factoring}}]\label{prop:jaffard:basic}
Let $R$ be an integral domain with quotient field $K$, and let $\Theta$ be a Jaffard family on $R$. For each $T\in\Theta$, let $\ortog{\Theta}(T):=\bigcap\{U\in\Theta\mid U\neq T\}$.
\begin{enumerate}[(a)]
\item\label{prop:jaffard:basic:complete} $\Theta$ is complete (i.e., $I=\bigcap\{IT\mid T\in\Theta\}$ for every ideal $I$ of $R$).
\item\label{prop:jaffard:basic:partitionSpec} For each $P\in\Spec(R)$, $P\neq(0)$, there is a unique $T\in\Theta$ such that $PT\neq T$.
\item\label{prop:jaffard:basic:flat} For each $T\in\Theta$, both $T$ and $\ortog{\Theta}(T)$ are flat over $R$.
\item\label{prop:jaffard:basic:complindip} For each $T\in\Theta$, we have $T\cdot\ortog{\Theta}(T)=K$.
\end{enumerate}
\end{prop}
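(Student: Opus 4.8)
The plan is to read all four statements off the comaximal factorization axiom, using repeatedly the elementary fact that a product of finitely many pairwise comaximal ideals equals their intersection. For \ref{prop:jaffard:basic:complete}, set $I_T:=IT\cap R$; by local finiteness $I_T=R$ for all but finitely many $T\in\Theta$, so the third axiom exhibits $I$ as the product of the finitely many proper $I_T$, and the fourth axiom makes these pairwise comaximal, whence $I=\bigcap_{T\in\Theta}I_T$. Since $I$ is integral we have $IT\subseteq T$, so $\bigcap_{T\in\Theta}IT\subseteq\bigcap_{T\in\Theta}T=R$; intersecting with $R$ then changes nothing and $\bigcap_{T\in\Theta}IT=\bigcap_{T\in\Theta}I_T=I$. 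For \ref{prop:jaffard:basic:partitionSpec}, apply this to $I=P$: since $P$ is proper, at least one factor $P_T:=PT\cap R$ is proper, giving existence; and since $P$ is prime and equals the product of the $P_T$, some $P_{T_0}\subseteq P$, while $P\subseteq P_T$ always, so $P=P_{T_0}$. If another $P_{T_1}$ were proper, the fourth axiom would give $P_{T_0}+P_{T_1}=R$, i.e. $P+P_{T_1}=R$; but $P=P_{T_0}$ is a product having $P_{T_1}$ among its factors, so $P\subseteq P_{T_1}$, forcing $P_{T_1}=R$, a contradiction. Hence $T_0$ is unique.

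The heart of the matter, from which both remaining statements follow, is the identity $K=T+\ortog{\Theta}(T)$ \emph{as $R$-submodules of $K$} — strictly stronger than \ref{prop:jaffard:basic:complindip}. Fix $T\in\Theta$, put $T':=\ortog{\Theta}(T)=\bigcap\{U\in\Theta\mid U\neq T\}$, and let $x=a/b\in K$ with $a,b\in R$, $b\neq 0$ (the case $x=0$ being trivial). Using the third axiom, factor $bR=(bT\cap R)\cdot C$ with $C:=\prod_{U\neq T}(bU\cap R)$, a finite product; by the fourth axiom $bT\cap R$ is comaximal with every factor of $C$, hence with $C$, so $1=\beta+\gamma$ with $\beta\in bT\cap R$ and $\gamma\in C$. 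Then $\beta/b\in T$; and $C\subseteq bU\cap R$ for every $U\neq T$ (it is one of the factors when $bU\neq U$, and trivially inside $R$ otherwise), so $C\subseteq\bigcap_{U\neq T}bU=bT'$ and $\gamma/b\in T'$. Hence $1/b=\beta/b+\gamma/b\in T+T'$, so $x=a(1/b)\in T+T'$. In particular $K=T+T'\subseteq T\cdot T'\subseteq K$, which is \ref{prop:jaffard:basic:complindip}.

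For \ref{prop:jaffard:basic:flat}, the first axiom gives $T\cap T'=\bigcap_{U\in\Theta}U=R$, so the sequence
\[
0\longrightarrow R\longrightarrow T\oplus T'\longrightarrow K\longrightarrow 0,
\]
with maps $r\mapsto(r,r)$ and $(t,t')\mapsto t-t'$, is exact: surjectivity on the right is precisely the identity $K=T+T'$ just established, and the kernel of the second map is $\{(u,u)\mid u\in T\cap T'\}=\{(r,r)\mid r\in R\}$. Now $K$, being a localization of $R$, is $R$-flat, while $R$ is $R$-free, so the long exact $\operatorname{Tor}^R(-,N)$ sequence of this short exact sequence gives $\operatorname{Tor}^R_1(T\oplus T',N)=0$ for every $R$-module $N$; thus $T\oplus T'$ is $R$-flat, and hence so are its direct summands $T$ and $\ortog{\Theta}(T)$.

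I expect the real obstacle to be \ref{prop:jaffard:basic:flat}: attacking it head-on — say through the criterion that an overring $T$ is flat over $R$ exactly when $(R:_R t)T=T$ for every $t\in T$ (Richman \cite{richamn_generalized-qr}) — is cumbersome, since one must then control the ideals $(R:_R t)S$ for all $S\in\Theta$ simultaneously. The route above sidesteps this by first extracting the \emph{module-level} splitting $K=T+\ortog{\Theta}(T)$ from the comaximal factorization axiom and only then invoking flatness of $K$; so the bulk of the work sits in the second paragraph, and everything else is formal.
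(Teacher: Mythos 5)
Your proof is correct. The paper itself gives no argument for this proposition --- it is quoted verbatim from Fontana--Houston--Lucas \cite[Theorem 6.3.1]{fontana_factoring} --- so there is nothing internal to compare against; what you have produced is a self-contained derivation from the four axioms, and every step checks out: (a) and (b) are the standard comaximality bookkeeping (product of finitely many pairwise comaximal ideals equals their intersection, plus primality to isolate the unique surviving $T$), and the real content is your identity $K=T+\ortog{\Theta}(T)$ at the level of $R$-modules, obtained by splitting $1=\beta+\gamma$ along the comaximal factorization of $bR$. That identity is indeed strictly stronger than (d), and feeding it into the Mayer--Vietoris sequence $0\to T\cap\ortog{\Theta}(T)\to T\oplus\ortog{\Theta}(T)\to T+\ortog{\Theta}(T)\to 0$, whose ends are $R$ and $K$, kills $\operatorname{Tor}_1$ of the middle term and gives flatness of both summands at once. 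This is a genuinely different (and arguably slicker) route to (c) than the one usually taken in the literature, where flatness of the members of a Jaffard family is extracted from Richman's criterion or from the local description $T_M=R_{M\cap R}$; your version trades that local analysis for one homological observation. The only loose ends are cosmetic: in (a) you should dispose of $I=(0)$ and of fractional ideals (scale by a denominator) separately, since the local-finiteness count and the product axiom are only invoked for nonzero integral ideals; and the degenerate case $\Theta=\{T\}$ (where $\ortog{\Theta}(T)$ is an empty intersection, i.e.\ $K$) is trivially consistent with everything you wrote. None of this affects the substance.
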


\begin{prop}\label{prop:caratt-jaffard}
Let $\Theta$ be a family of flat overrings of the domain $R$, and let $K$ be the quotient field of $R$. Then, $\Theta$ is a Jaffard family if and only if it is complete, locally finite and $TS=K$ for all $T,S\in\Theta$, $T\neq S$.
\end{prop}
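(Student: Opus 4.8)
The plan is to prove both implications, with essentially all the content living in the ``if'' direction. For ``only if'' there is nothing to do: a Jaffard family $\Theta$ is locally finite by definition and complete by Proposition~\ref{prop:jaffard:basic}\ref{prop:jaffard:basic:complete}, while for distinct $T,S\in\Theta$ we have $\ortog{\Theta}(T)=\bigcap\{U\in\Theta\mid U\neq T\}\subseteq S$, so Proposition~\ref{prop:jaffard:basic}\ref{prop:jaffard:basic:complindip} gives $TS\supseteq T\cdot\ortog{\Theta}(T)=K$, hence $TS=K$.

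For the converse, assume $\Theta$ is a family of flat overrings of $R$ that is complete, locally finite, and satisfies $TS=K$ for all distinct $T,S\in\Theta$; I must verify the four defining axioms. The first, $R=\bigcap_{T\in\Theta}T$, is completeness applied to the unit ideal, and the second is one of the hypotheses. For the remaining two, I would fix a nonzero integral ideal $I$ of $R$, write $I_T:=IT\cap R$ for $T\in\Theta$, prove the comaximality axiom first, and then deduce the factorization axiom from it.

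For comaximality, given distinct $T,S\in\Theta$ I would set $J:=I_T+I_S$ and show that $JW=W$ for \emph{every} $W\in\Theta$; completeness then forces $J=\bigcap_{W\in\Theta}JW=\bigcap_{W\in\Theta}W=R$. If $W\neq T$, then, since $W$ is flat over $R$, the extension commutes with the intersection, so $I_TW=(IT\cap R)W=(IT)W\cap RW=(I(TW))\cap W$; as $TW=K$ and $I\neq(0)$, we get $I(TW)=IK=K$, hence $I_TW=W$ and therefore $JW=W$. If $W=T$, the symmetric computation applied to $I_S$ (now using $ST=K$, valid since $S\neq T$) gives $I_ST=T$, hence $JT=T$. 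This establishes the fourth axiom.

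For the factorization axiom, local finiteness forces $I_T=R$ for all but finitely many $T$ (a nonzero element of $I$ is a unit in all but finitely many $T\in\Theta$, and then $IT=T$), so $\prod_{T\in\Theta}I_T$ is a finite product; since the $I_T$ are pairwise comaximal by the previous step, the Chinese Remainder Theorem gives $\prod_{T\in\Theta}I_T=\bigcap_{T\in\Theta}I_T=\bigl(\bigcap_{T\in\Theta}IT\bigr)\cap R=I\cap R=I$, the last two equalities by completeness; hence $I=\prod_{T\in\Theta}I_T$. The only step carrying any weight is the comaximality argument, and even there the crux is purely formal: flatness of $W$ lets $I_T$ be ``spread out'' along $\Theta$ via the identity $(A\cap R)W=AW\cap W$, and the hypotheses $TW=K$ (for $W\neq T$) collapse $I(TW)$ to $K$. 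Flatness of the members of $\Theta$ is needed precisely here; in the ``only if'' direction it is automatic by Proposition~\ref{prop:jaffard:basic}\ref{prop:jaffard:basic:flat}.
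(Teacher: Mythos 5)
Your proposal is correct and follows essentially the same route as the paper: the only substantive step in both is the computation $(IT\cap R)W = ITW\cap W = IK\cap W = W$ for $W\neq T$ (using flatness of $W$), followed by completeness to get comaximality and local finiteness plus coprimality to turn the intersection into a product. The paper phrases the comaximality step contrapositively (a common prime of $IT\cap R$ and $IS\cap R$ would survive in some $A\in\Theta$, forcing a contradiction), whereas you argue directly that $(I_T+I_S)W=W$ for every $W$; this is a purely cosmetic difference.
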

\begin{proof}
If $\Theta$ is a Jaffard family, the properties follow by the definition and Proposition \ref{prop:jaffard:basic}. Conversely, suppose $\Theta$ verifies the three properties, let $I\neq(0)$ be an ideal of $R$ and let $T\neq S$ be members of $\Theta$. If $IT\cap R$ and $IS\cap R$ are not coprime, then there would be a prime $P$ of $R$ containing both; since $\Theta$ is complete, it would follow that both $IT\cap R$ and $IS\cap R$ survive in some $A\in\Theta$. In particular, without loss of generality, $A\neq T$; however,
\begin{equation*}
(IT\cap R)A=ITA\cap A=IK\cap A=A,
\end{equation*}
a contradiction. Therefore, $(IT\cap R)+(IS\cap R)=R$. Moreover, $I=\bigcap\{IT\cap R\mid T\in\Theta\}=(IT_1\cap R)\cap\cdots\cap(IT_n\cap R)$ by local finiteness; since the $IT_i\cap R$ are coprime, their intersection is equal to their product, and thus $I=(IT_1\cap R)\cdots(IT_n\cap R)$.
\end{proof}

\begin{oss}\label{oss:Matlis}
Any Jaffard family $\Theta$ defines a partition on $\Max(R)$, where each class is composed by the $M\in\Max(R)$ such that $MT\neq T$ for some fixed $T\in\Theta$. In particular, $T=\bigcap R_M$, as $M$ ranges in the class relative to $M$; hence, different Jaffard families define different partitions. In particular, a local domain has only one Jaffard family, namely $\{R\}$, and a semilocal domain has only a finite number of Jaffard families.

However, not every partition of $\Max(R)$ can arise in this way. For example, let $\Theta$ be a Jaffard family and let $M,N\in\Max(R)$; by Proposition \ref{prop:jaffard:basic}\ref{prop:jaffard:basic:partitionSpec}, there are unique overrings $T,U\in\Theta$ such that $MT\neq T$ and $NU\neq U$. If there is a nonzero prime $P\subseteq M\cap N$, then $PT\neq T$ and $PU\neq U$; therefore, again by Proposition \ref{prop:jaffard:basic}\ref{prop:jaffard:basic:partitionSpec}, it must be $T=U$. 
\end{oss}

A \emph{$h$-local domain} is an integral domain $R$ such that $\Max(R)$ is locally finite and such that every prime ideal $P$ is contained in only one maximal ideal. In this case, $\{R_M\mid M\in\Max(R)\}$ is a Jaffard family of $R$; conversely, if $\{R_M\mid M\in\Max(R)\}$ is a Jaffard family, then $\Max(R)$ is locally finite (by definition) and each prime is contained in only one maximal ideal (by Proposition \ref{prop:jaffard:basic}\ref{prop:jaffard:basic:partitionSpec}), and thus $R$ is $h$-local. Many properties of the Jaffard families can be seen as generalizations of the corresponding properties of $h$-local domains; the following proposition is an example (compare \cite[Proposition 3.1]{olberding_globalizing}).
\begin{prop}\label{prop:integintersect}
Let $R$ be a domain and $T$ be a Jaffard overring of $R$. Then:
\begin{enumerate}[(a)]
\item\label{prop:integintersect:a} for every family $\{X_\alpha:\alpha\in A\}$ of $R$-submodules of $K$ with nonzero intersection, we have $\left(\bigcap_{\alpha\in A}X_\alpha\right)T=\bigcap_{\alpha\in A}X_\alpha T$;
\item\label{prop:integintersect:b} if $\{I_\alpha:\alpha\in A\}$ is a family of integral ideals of $R$ with nonzero intersection such that $\left(\bigcap_{\alpha\in A}I_\alpha\right)T\neq T$, then $I_{\overline{\alpha}}T\neq T$ for some $\overline{\alpha}\in A$.
\end{enumerate}
\end{prop}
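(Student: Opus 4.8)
The plan is to prove (b) first and then deduce (a) from it; both parts rest on one elementary lemma. Fix a Jaffard family $\Theta$ of $R$ with $T\in\Theta$. By Proposition \ref{prop:caratt-jaffard} any two distinct members $U\neq V$ of $\Theta$ satisfy $UV=K$. \textbf{Lemma.} If $U\in\Theta$, $U\neq T$, and $\mathfrak b$ is a nonzero $U$-submodule of $K$, then $\mathfrak bT=K$; indeed $\mathfrak b=\mathfrak bU$, so $\mathfrak bT=\mathfrak b(UT)=\mathfrak bK=K$ because $K$ is a field and $\mathfrak b\neq(0)$. This is the device that circumvents the fact that flat base change does not commute with infinite intersections: whenever an infinite intersection appears, I will perform it \emph{inside} some $U\neq T$, where it produces a nonzero $U$-submodule, and then extend to $T$ via the Lemma.

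\emph{Proof of (b).} It suffices to show: if $\bigcap_\alpha I_\alpha\neq(0)$ and $I_\alpha T=T$ for every $\alpha$, then $(\bigcap_\alpha I_\alpha)T=T$. Pick $0\neq d\in\bigcap_\alpha I_\alpha$. Local finiteness makes $S:=\{U\in\Theta\mid dU\neq U\}$ finite, and since $d\in I_\alpha\subseteq R$ we have $I_\alpha U=U$ for $U\notin S$; hence the Jaffard product $I_\alpha=\prod_{U\in\Theta}(I_\alpha U\cap R)$ reduces to $\prod_{U\in S}(I_\alpha U\cap R)$, and as these factors are pairwise coprime, $I_\alpha=\bigcap_{U\in S}(I_\alpha U\cap R)$. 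The factor for $U=T$ equals $T\cap R=R$ by hypothesis, so $I_\alpha=\bigcap_{U\in S\setminus\{T\}}(I_\alpha U\cap R)$. Intersecting over $\alpha$ and invoking flatness of $T$ for the \emph{finite} intersection over $S\setminus\{T\}$,
\begin{equation*}
\Bigl(\bigcap_\alpha I_\alpha\Bigr)T=\bigcap_{U\in S\setminus\{T\}}\Bigl(\bigl(\bigcap_\alpha(I_\alpha U\cap R)\bigr)T\Bigr).
\end{equation*}
Fix $U\in S\setminus\{T\}$. Then $\bigcap_\alpha(I_\alpha U\cap R)=\bigl(\bigcap_\alpha I_\alpha U\bigr)\cap R$, and $\mathfrak b:=\bigcap_\alpha I_\alpha U$ is a nonzero ideal of $U$ (it contains $dU$); by the Lemma $\mathfrak bT=K$, so, again by flatness (a binary intersection), $(\mathfrak b\cap R)T=\mathfrak bT\cap RT=K\cap T=T$. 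Therefore $(\bigcap_\alpha I_\alpha)T=\bigcap_{U\in S\setminus\{T\}}T=T$, as wanted.

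\emph{Proof of (a).} Only $\bigcap_\alpha(X_\alpha T)\subseteq(\bigcap_\alpha X_\alpha)T$ needs proof, so fix $0\neq z\in\bigcap_\alpha(X_\alpha T)$ and put $I_\alpha:=(X_\alpha:Rz)\cap R=\{r\in R\mid rz\in X_\alpha\}$ (colon in $K$), an integral ideal of $R$. Since $Rz$ is finitely generated and $T$ is flat, $(X_\alpha:Rz)T=(X_\alpha T:Tz)$, which contains $1$ because $z\in X_\alpha T$; intersecting with $R$ gives $I_\alpha T=(X_\alpha:Rz)T\cap T\ni1$, i.e.\ $I_\alpha T=T$. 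Also $\bigcap_\alpha I_\alpha=\{r\in R\mid rz\in\bigcap_\alpha X_\alpha\}$ is nonzero, being the intersection with $R$ of the nonzero $R$-submodule $z^{-1}\bigl(\bigcap_\alpha X_\alpha\bigr)$ of $K$ (two nonzero submodules of a field meet nontrivially). By (b), $(\bigcap_\alpha I_\alpha)T=T$; writing $1=\sum_i r_iu_i$ with $r_i\in\bigcap_\alpha I_\alpha$ and $u_i\in T$, and using $r_iz\in\bigcap_\alpha X_\alpha$, we obtain $z=\sum_i u_i(r_iz)\in\bigl(\bigcap_\alpha X_\alpha\bigr)T$.

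The only genuinely delicate point is the one the Lemma handles: a priori $(\bigcap_\alpha I_\alpha)T$ is merely contained in $\bigcap_\alpha(I_\alpha T)$, with no reason for equality for infinite index sets. The argument is arranged so that, after the finite-character reduction to the finite set $S$, every surviving infinite intersection is taken inside a ring $U\neq T$ of the Jaffard family, where the Lemma forces its extension to $T$ to be all of $K$; the remaining intersections are finite and are dealt with by ordinary flatness.
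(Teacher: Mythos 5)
Your proof is correct, but it runs in the opposite direction from the paper's and uses different ingredients. The paper proves (a) directly: it writes each $X_\alpha$ as $\bigcap_{U\in\Theta}X_\alpha U$ (completeness of the Jaffard family), interchanges the two intersections, peels off the $U=T$ component using flatness on a \emph{binary} intersection, and kills the remaining factor by observing that $\bigcap_{U\neq T}\bigcap_\alpha X_\alpha U$ is an $\ortog{\Theta}(T)$-module, so its product with $T$ is $K$ by Proposition \ref{prop:jaffard:basic}\ref{prop:jaffard:basic:complindip}; part (b) is then a one-line corollary of (a). You instead prove (b) first, from the factorization axiom $I_\alpha=\prod_{U}(I_\alpha U\cap R)$ together with local finiteness and coprimality to reduce to a finite intersection where ordinary flatness applies, and your lemma ($\mathfrak{b}T=K$ for a nonzero $U$-submodule $\mathfrak{b}$ with $U\neq T$) plays exactly the role that $T\cdot\ortog{\Theta}(T)=K$ plays in the paper. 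You then recover (a) from (b) via the conductor ideals $I_\alpha=\{r\in R\mid rz\in X_\alpha\}$, checking $I_\alpha T=T$ by the flat colon formula and $\bigcap_\alpha I_\alpha\neq(0)$ by the standard fact that two nonzero $R$-submodules of $K$ meet nontrivially. Both arguments are sound; the paper's is shorter and stays entirely at the level of module intersections, while yours exercises the raw Jaffard axioms (coprime factorization) more directly and gives an element-wise derivation of (a) that some readers may find more transparent, at the cost of extra length.
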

\begin{proof}
\ref{prop:integintersect:a} Let $\Theta$ be a Jaffard family of $R$ such that $T\in\Theta$. Then, by the flatness of $T$,
\begin{equation*}
\left(\bigcap_{\alpha\in A}X_\alpha\right)T= 
\left(\bigcap_{\alpha\in A}\bigcap_{U\in\Theta}X_\alpha U\right)T=
\left(\bigcap_{U\in\Theta}\bigcap_{\alpha\in A}X_\alpha U\right)T=\eql
=\left(\bigcap_{U\in\Theta\setminus\{T\}}\bigcap_{\alpha\in A}X_\alpha U\right)T\cap \bigcap_{\alpha\in A}X_\alpha T=K\cap\bigcap_{\alpha\in A}X_\alpha T
\end{equation*}
since $\bigcap_{U\in\Theta\setminus\{T\}}\bigcap_{\alpha\in A}X_\alpha U$ is a $\ortog{\Theta}(T)$-module, and thus its product with $T$ is equal to $K$ by Proposition \ref{prop:jaffard:basic}\ref{prop:jaffard:basic:complindip}.

(a $\Longrightarrow$ b). Suppose $\left(\bigcap_{\alpha\in A}I_\alpha\right)T\neq T$. Since $\left(\bigcap_{\alpha\in A}I_\alpha\right)T\subseteq T$, then 1 is not contained in the left hand side. By \ref{prop:integintersect:a}, 1 is not contained in $\bigcap_{\alpha\in A}I_\alpha T$, i.e., there is a $\overline{\alpha}$ such that $1\notin I_{\overline{\alpha}} T$, and thus $I_{\overline{\alpha}} T\neq T$.
\end{proof}

\section{Jaffard families and star operations}\label{sect:jaff-star}
The reason why we introduced Jaffard families is that they provide a way to decompose $\insstar(R)$ as a product of spaces of star operations of overrings of $T$. Before reaching this objective (Theorem \ref{teor:star-jaffard}) we show that weaker properties can lead to a decomposition of at least a subset of $\insstar(R)$.

\begin{prop}\label{prop:indip-rhoinj}
Let $R$ be an integral domain with quotient field $K$. Let $\Theta$ be a set of flat overrings of $R$ such that $R=\bigcap\{T\mid T\in\Theta\}$ and such that $AB=K$ whenever $A,B\in\Theta$ and $A\neq B$. Then, there is an injective order-preserving map 
\begin{equation*}
\begin{aligned}
\rho_\Theta\colon\prod_{T\in\Theta}\insstar(T) & \longrightarrow\insstar(R) \\
(\star^{(T)})_{T\in\Theta} & \longmapsto\bigwedge_{T\in\Theta}\star^{(T)},
\end{aligned}
\end{equation*}
where $\bigwedge_{T\in\Theta}\star^{(T)}$ is the map such that
\begin{equation*}
I\mapsto\bigcap_{T\in\Theta}(IT)^{\star^{(T)}}
\end{equation*}
for every fractional ideal $I$ of $R$.
\end{prop}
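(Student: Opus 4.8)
The plan is to show three things in sequence: that $\bigwedge_{T\in\Theta}\star^{(T)}$ is genuinely a star operation on $R$, that $\rho_\Theta$ is order-preserving, and that it is injective. For the first point I would check the five axioms of a star operation for the map $I\mapsto\bigcap_{T\in\Theta}(IT)^{\star^{(T)}}$. Properties (a), (c), (e) are immediate from the corresponding properties of each $\star^{(T)}$ together with the fact that extension and intersection commute with multiplication by $x\in K$; property (d) follows since $RT=T$ so $(RT)^{\star^{(T)}}=T$ and $\bigcap_{T\in\Theta}T=R$ by hypothesis. The only axiom requiring a small argument is idempotency (b): writing $J:=\bigcap_{T\in\Theta}(IT)^{\star^{(T)}}$, I need $(JT)^{\star^{(T)}}=(IT)^{\star^{(T)}}$ for each fixed $T$; since $J\subseteq(IT)^{\star^{(T)}}$ gives $\subseteq$, the reverse inclusion comes from observing that $J\supseteq\bigl(\bigcap_{U\neq T}(IU)^{\star^{(U)}}\bigr)\cap(IT)^{\star^{(T)}}$ and that the first factor, being an $\ortog{\Theta}(T)$-submodule of $K$, has the property that its extension to $T$ is all of $K$ — so $JT\supseteq(IT)^{\star^{(T)}}$, whence $(JT)^{\star^{(T)}}\supseteq(IT)^{\star^{(T)}}$. (This is exactly the trick used in the proof of Proposition \ref{prop:integintersect}.) Here one must also be slightly careful that $J\neq(0)$, which holds because the product $IT=T$ for all but possibly finitely many... actually one does not even need local finiteness here, only that $I\neq(0)$ forces each $(IT)^{\star^{(T)}}\neq(0)$, so a finite intersection argument is not literally available, but $J$ contains $I\neq(0)$ anyway.

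Order-preservation is routine: if $\star^{(T)}\leq\sigma^{(T)}$ for all $T$, then $(IT)^{\star^{(T)}}\subseteq(IT)^{\sigma^{(T)}}$ for each $T$, and intersecting over $T$ gives $\rho_\Theta\bigl((\star^{(T)})_T\bigr)\leq\rho_\Theta\bigl((\sigma^{(T)})_T\bigr)$.

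For injectivity — which I expect to be the main obstacle — the key is to recover the component $\star^{(T)}$ from $\rho_\Theta\bigl((\star^{(T)})_T\bigr)=:\star$ by extending $\star$ back to $T$. Concretely, I would show that $\star$ is extendable to each $T\in\Theta$ (which makes sense since each $T$ is flat over $R$) and that $\star_T=\star^{(T)}$; injectivity follows at once. To do this, fix $T\in\Theta$ and a fractional ideal $I$ of $R$. I claim $I^\star T=(IT)^{\star^{(T)}}$. One inclusion is clear since $I^\star=\bigcap_U(IU)^{\star^{(U)}}\subseteq(IT)^{\star^{(T)}}$ and the latter is already a $T$-module, so $I^\star T\subseteq(IT)^{\star^{(T)}}$. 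For the reverse, use the same splitting as above: $I^\star=\bigl(\bigcap_{U\neq T}(IU)^{\star^{(U)}}\bigr)\cap(IT)^{\star^{(T)}}$, and the first factor $X$ is an $\ortog{\Theta}(T)$-submodule of $K$ with $XT=K$ (using $AB=K$ for distinct members, which forces $\bigl(\bigcap_{U\neq T}U\bigr)\cdot T=K$). Hence $I^\star T=(X\cap(IT)^{\star^{(T)}})T\supseteq$ — here I invoke the flatness of $T$ to distribute extension over the intersection, $I^\star T=XT\cap(IT)^{\star^{(T)}}T=K\cap(IT)^{\star^{(T)}}=(IT)^{\star^{(T)}}$. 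This gives $I^\star T=(IT)^{\star^{(T)}}$, which in particular shows $\star$ is extendable to $T$ with $\star_T=\star^{(T)}$, and thus $\rho_\Theta$ is injective. The one delicate point to get right is the assertion $\bigl(\bigcap_{U\in\Theta\setminus\{T\}}U\bigr)\cdot T=K$: with only the hypotheses available (no completeness, no local finiteness) this needs the pairwise condition $AB=K$, and a short argument that multiplying $T$ into an intersection of the $U$'s can be done one factor at a time is not available in general, so instead I would argue that $\bigcap_{U\neq T}U$ is contained in each $U\neq T$, hence its product with $T$ contains $UT=K$ for any such $U$ — provided $\Theta\setminus\{T\}$ is nonempty; when $\Theta=\{T\}$ the statement is trivial since then $R=T$.
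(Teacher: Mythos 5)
Your verification that $\bigwedge_{T}\star^{(T)}$ is a star operation and that $\rho_\Theta$ is order-preserving is essentially fine (for idempotency you only need $(JT)^{\star^{(T)}}\subseteq(IT)^{\star^{(T)}}$, which already follows from $J\subseteq(IT)^{\star^{(T)}}$ and the fact that the right-hand side is a $\star^{(T)}$-closed $T$-module; the ``reverse inclusion'' machinery is not needed there). The genuine gap is in the injectivity argument. Your key step is that $X:=\bigcap_{U\in\Theta\setminus\{T\}}(IU)^{\star^{(U)}}$ satisfies $XT=K$, which you reduce to $\bigl(\bigcap_{U\neq T}U\bigr)\cdot T=K$. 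That identity is not available under the hypotheses of this proposition: it is exactly Proposition \ref{prop:jaffard:basic}\ref{prop:jaffard:basic:complindip}, proved only for Jaffard families, where local finiteness is essential, whereas the point of the present statement is to work without it. Moreover, the justification you offer is backwards: from $\bigcap_{U\neq T}U\subseteq U$ one can only conclude $\bigl(\bigcap_{U\neq T}U\bigr)T\subseteq UT=K$, which is vacuous --- multiplying a \emph{smaller} module by $T$ gives a smaller product, not a larger one. (Concretely, in an almost Dedekind non-Dedekind domain with $\Theta=\{R_M\mid M\in\Max(R)\}$, a redundant maximal ideal $M$ gives $\bigcap_{N\neq M}R_N=R$, whose product with $R_M$ is $R_M\neq K$.) As a result, your claimed identity $I^\star T=(IT)^{\star^{(T)}}$ for \emph{all} fractional ideals $I$ --- i.e., that $\star$ is extendable to $T$ with $\star_T=\star^{(T)}$ --- is not established; indeed it is stronger than what the paper proves at this stage, since extendability is only obtained later (Theorem \ref{teor:star-jaffard}) under the full Jaffard hypothesis.

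The repair is to test $\star$ only on ideals contracted from the distinguished overring, which is what the paper does. If $\star_1^{(U)}\neq\star_2^{(U)}$, choose an integral ideal $J$ of $U$ separating them and set $I:=J\cap R$. For every $T\neq U$, flatness of $T$ gives $IT=(J\cap R)T=JT\cap T$, and $JT=JUT=JK=K$, so $IT=T$ and $(IT)^{\star_i^{(T)}}=T$. Hence the module playing the role of your $X$ is $\bigcap_{T\neq U}T\supseteq R$, and all one needs is $XU\supseteq RU=U\supseteq(IU)^{\star_i^{(U)}}=J^{\star_i^{(U)}}$ --- no product formula of the type $XU=K$ is required. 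Splitting off the single factor $(IU)^{\star_i^{(U)}}$ by flatness of $U$ then gives $I^\star U=J^{\star_i^{(U)}}$ for both $i=1,2$, contradicting the choice of $J$.
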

\begin{proof}
Let $(\star_T)_{T\in\Theta}\in\prod_{T\in\Theta}\insstar(T)$, and let $\star:=\rho_\Theta((\star^{(T)})_{T\in\Theta})$. Since $\bigcap_{T\in\Theta}T=R$, the map $\star$ is a star operation; moreover, it is clear that if $\star_1^{(T)}\leq\star_2^{(T)}$ for all $T$ then $\rho_\Theta(\star_1^{(T)})\leq\rho_\Theta(\star_2^{(T)})$. Hence, $\rho_\Theta$ is well-defined and order-preserving; we need to show that it is injective.

Suppose it is not; then, $\star:=\rho_\Theta(\star_1^{(T)})=\rho_\Theta(\star_2^{(T)})$ for some families of star operations such that $\star_1^{(U)}\neq\star_2^{(U)}$ for some $U\in\Theta$. There is an integral ideal $J$ of $U$ such that $J^{\star_1^{(U)}}\neq J^{\star_2^{(U)}}$; let $I:=J\cap R$. Since $U$ is flat, for both $i=1$ and $i=2$ we have
\begin{equation*}
I^\star U=\left[\bigcap_{T\in\Theta}(IT)^{\star_i^{(T)}}\right]U=(IU)^{\star_i^{(U)}}U\cap \left[\bigcap_{T\in\Theta\setminus\{U\}}(IT)^{\star_i^{(T)}}\right]U.
\end{equation*}

If $T\neq U$, then, since $T$ is flat,
\begin{equation*}
(IT)^{\star_i^{(T)}}=((J\cap R)T)^{\star_i^{(T)}}=(JT\cap T)^{\star_i^{(T)}}.
\end{equation*}
However, $JT=JUT=K$ since $UT=K$ (by hypothesis); therefore, $(IT)^{\star_i^{(T)}}=T$, and (since $I\subseteq U$)
\begin{equation*}
I^\star U=(IU)^{\star_i^{(U)}}U\cap\left[\bigcap_{T\in\Theta\setminus\{U\}}T\right]U= (IU)^{\star_i^{(U)}}U\cap\left[\bigcap_{T\in\Theta}T\right]U=\eql =(IU)^{\star_i^{(U)}}\cap RU=(IU)^{\star_i^{(U)}}=J^{\star_i^{(U)}}
\end{equation*}
for both $i=1$ and $i=2$. However, this contradicts the choice of $J$; hence, $\rho_\Theta$ is injective.
\end{proof}

If $\Theta$ is a Jaffard family, the previous proposition can be strengthened. We need two lemmas.
\begin{lemma}\label{lemma:intersez-ritorno}
Let $R$ be a domain with quotient field $K$, and let $\Theta$ be a Jaffard family on $R$. For every $U\in\Theta$, let $J_U$ be a $U$-submodule of $K$, and define $J:=\bigcap_{U\in\Theta}J_U$. If $J\neq(0)$, then for every $T\in\Theta$ we have $JT=J_T$.
\end{lemma}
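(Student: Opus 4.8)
The plan is to prove the two inclusions $JT\subseteq J_T$ and $J_T\subseteq JT$ separately. The first is immediate: since $J\subseteq J_T$ we get $JT\subseteq J_TT=J_T$, the equality holding because $J_T$ is a $T$-module.

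For the reverse inclusion the main tool will be Proposition \ref{prop:integintersect}\ref{prop:integintersect:a}. Each $J_U$ is a $U$-submodule of $K$, hence in particular an $R$-submodule of $K$; by hypothesis $J=\bigcap_{U\in\Theta}J_U\neq(0)$; and $T$ is a Jaffard overring of $R$. Thus Proposition \ref{prop:integintersect}\ref{prop:integintersect:a} applies and yields
\[
JT=\left(\bigcap_{U\in\Theta}J_U\right)T=\bigcap_{U\in\Theta}J_UT .
\]
I would then evaluate the factors on the right. For the factor indexed by $T$ itself we have $J_TT=J_T$. For $U\in\Theta$ with $U\neq T$, note that $J_U\supseteq J\neq(0)$, so we may pick $0\neq a\in J_U$; then $aU\subseteq J_U$, hence $aUT\subseteq J_UT\subseteq K$, and since $UT=K$ by Proposition \ref{prop:caratt-jaffard} we get $aK=K\subseteq J_UT$, i.e.\ $J_UT=K$. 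Substituting,
\[
JT=J_TT\cap\bigcap_{U\in\Theta,\,U\neq T}J_UT=J_T\cap\bigcap_{U\in\Theta,\,U\neq T}K=J_T ,
\]
which is the desired equality.

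The argument is short and there is no serious obstacle; the one point that genuinely requires the stated hypothesis is the application of Proposition \ref{prop:integintersect}\ref{prop:integintersect:a}, namely that $J\neq(0)$, which is exactly what lets $T$ be distributed across the (locally finite) intersection via flatness — without it the conclusion can fail. Everything else is a routine check of the module structures together with the defining relation $UT=K$ for distinct members of a Jaffard family.
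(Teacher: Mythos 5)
Your proof is correct and follows essentially the same route as the paper: both apply Proposition \ref{prop:integintersect}\ref{prop:integintersect:a} to distribute $T$ over the intersection and then observe that $J_UT=K$ for $U\neq T$ (the paper gets this in one line from $J_UT=J_UUT=J_UK=K$, using that $J_U$ is a nonzero $U$-module, which is the same computation you spell out elementwise). The separate treatment of the easy inclusion $JT\subseteq J_T$ is harmless but unnecessary, since the chain of equalities already gives both directions at once.
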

\begin{proof}
By Proposition \ref{prop:integintersect}\ref{prop:integintersect:a}, we have
\begin{equation*}
JT=\left(\bigcap_{U\in\Theta}J_U\right)T= \bigcap_{U\in\Theta}J_UT.
\end{equation*}
If $U\neq T$, then $J_UT=J_UUT=J_UK=K$; therefore, $JT=J_TT=J_T$.
\end{proof}

The next lemma can be seen as a generalization of \cite[Theorem 6.2.2(2)]{fontana_factoring} and \cite[Lemma 2.3]{warfield}.
\begin{lemma}\label{lemma:Tcolon}
Let $R$ be an integral domain, $T$ be a Jaffard overring of $R$, and let $I,J\in\inssubmod(R)$ such that $(I:J)\neq(0)$. Then, $(I:J)T=(IT:JT)$.
\end{lemma}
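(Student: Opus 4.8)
The plan is to reduce the statement to flatness of $T$ together with the intersection-preservation property of Jaffard overrings (Proposition \ref{prop:integintersect}\ref{prop:integintersect:a}), handling the failure of finite generation of $J$ by writing $J$ as an intersection of cyclic modules. First, recall that for a flat overring and a \emph{finitely generated} module $J$ we already have $(I:J)T=(IT:JT)$ \cite[Theorem 7.4]{matsumura}; so the only obstacle is the general $J$. Write $J=\bigcap_{x}Rx$, where $x$ ranges over the nonzero elements of $(R:J)^{-1}$ — more precisely, over $\{x\in K\mid J\subseteq Rx\}$, a set that is nonempty because $J\subseteq R$ up to a common denominator (and if $J$ is not bounded above by any cyclic module, one first replaces $R$ by a localization trick, or simply notes $J=\bigcap\{Rx\mid Rx\supseteq J\}$ always holds since $J$ is determined by its elements). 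Then $(I:J)=(I:\bigcap_x Rx)\supseteq\bigcap_x(I:Rx)$, but in fact equality need not be immediate; the cleaner route is the dual: use $(I:J)=\bigcap_{y\in J}(I:Ry)=\bigcap_{y\in J}Iy^{-1}$, an intersection of $R$-submodules of $K$.

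With that identity in hand, the key computation is
\begin{equation*}
(I:J)T=\left(\bigcap_{y\in J\setminus\{0\}}Iy^{-1}\right)T=\bigcap_{y\in J\setminus\{0\}}Iy^{-1}T=\bigcap_{y\in J\setminus\{0\}}(IT:Ry)=(IT:JT),
\end{equation*}
provided the intersection is nonzero so that Proposition \ref{prop:integintersect}\ref{prop:integintersect:a} applies, and provided the last equality — that $\bigcap_{y\in J}(IT:Ry)=(IT:JT)$ — holds over $T$. The latter is just the elementary identity $(A:B)=\bigcap_{b\in B}(A:Rb)$ valid in any ring for any submodule $B$ of the fraction field, applied with $B=JT$ (note $JT$ is generated as a $T$-module by $J$, so $\bigcap_{y\in J}(IT:Ty)=(IT:JT)$, and $(IT:Ry)=(IT:Ty)$). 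The hypothesis $(I:J)\neq(0)$ guarantees $\bigcap_{y}Iy^{-1}\neq(0)$, which is exactly what is needed to invoke Proposition \ref{prop:integintersect}\ref{prop:integintersect:a}.

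The one inclusion that always holds without hypotheses is $(I:J)T\subseteq(IT:JT)$: if $z\in(I:J)$ then $zJ\subseteq I$, so $zJT\subseteq IT$, hence $zT\subseteq(IT:JT)$. So strictly speaking I only need the reverse inclusion, and the displayed chain delivers it. The main obstacle I anticipate is purely bookkeeping: making sure the indexing set $\{y\in J\setminus\{0\}\}$ genuinely gives $(I:J)=\bigcap_y Iy^{-1}$ (this is the standard fact $(I:J)=\bigcap_{y\in J}(I:yR)$ and $(I:yR)=y^{-1}I$ for $y\neq 0$, valid since $R$ is a domain and everything sits inside $K$), and that the resulting intersection is over a family with nonzero intersection so Proposition \ref{prop:integintersect}\ref{prop:integintersect:a} is legitimately applicable — both of which follow from $(I:J)\neq(0)$. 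No genuinely hard step is expected; the content is entirely in already-established flatness and the Jaffard intersection property.
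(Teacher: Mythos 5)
Your proof is correct and is essentially the paper's own argument: the paper likewise writes $(I:J)=\bigcap_{j\in J}j^{-1}I$, notes this intersection is nonzero by hypothesis, and applies Proposition \ref{prop:integintersect}\ref{prop:integintersect:a}. The false start in your first paragraph (trying to write $J$ as an intersection of cyclic modules) is harmless since you correctly discard it in favour of the dual decomposition of $(I:J)$.
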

\begin{proof}
It is enough to note that $(I:J)=\bigcap_{j\in J}j^{-1}I\neq(0)$, and apply Proposition \ref{prop:integintersect}\ref{prop:integintersect:a}.
\end{proof}

\begin{teor}\label{teor:star-jaffard}
Let $R$ be an integral domain and let $\Theta$ be a Jaffard family on $R$. Then, every $\star\in\insstar(R)$ is extendable to every $T\in\Theta$, and the maps
\begin{equation*}
\begin{aligned}
\lambda_\Theta\colon \insstar(R) & \longrightarrow \prod_{T\in\Theta}\insstar(T)\\
\star & \longmapsto (\star_T)_{T\in\Theta}
\end{aligned}
\quad\text{and}\quad
\begin{aligned}
\rho_\Theta\colon \prod_{T\in\Theta}\insstar(T) & \longrightarrow \insstar(R)\\
(\star^{(T)})_{T\in\Theta} & \longmapsto \bigwedge_{T\in\Theta}\star^{(T)}
\end{aligned}
\end{equation*}
(where $\bigwedge_{T\in\Theta}\star^{(T)}$ is defined as in Proposition \ref{prop:indip-rhoinj}) are order-preserving bijections between $\insstar(R)$ and $\prod\{\insstar(T)\mid T\in\Theta\}$.
\end{teor}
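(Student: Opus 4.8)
The plan is to derive the extendability claim directly from Lemma~\ref{lemma:Tcolon}, and then to obtain the bijectivity by combining it with Propositions~\ref{prop:indip-rhoinj} and~\ref{prop:jaffard:basic} and the completeness of the Jaffard family.

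First I would prove extendability of an arbitrary $\star\in\insstar(R)$ to each $T\in\Theta$. By Proposition~\ref{prop:starloc:basic}\ref{prop:starloc:basic:equiv} this amounts to showing $I^\star T=J^\star T$ whenever $I,J$ are nonzero fractional ideals of $R$ with $IT=JT$, and by symmetry only $J^\star T\subseteq I^\star T$ is needed. The key point is that $(I:J)$ is nonzero (take $0\neq a\in I$ and $0\neq d\in R$ with $dJ\subseteq R$; then $0\neq ad\in(I:J)$) and that $(I:J)\subseteq(I^\star:J^\star)$, since $xJ\subseteq I$ forces $xJ^\star=(xJ)^\star\subseteq I^\star$. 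Hence $(I^\star:J^\star)\neq(0)$, and two applications of Lemma~\ref{lemma:Tcolon} (legitimate because $T$ is a Jaffard overring and the relevant colon modules are nonzero), together with $JT=IT$, give
\begin{equation*}
1\in(IT:JT)=(I:J)T\subseteq(I^\star:J^\star)T=(I^\star T:J^\star T),
\end{equation*}
i.e.\ $J^\star T\subseteq I^\star T$. This shows $\lambda_\Theta$ is well-defined, and it is order-preserving by Proposition~\ref{prop:starloc:ordine}\ref{prop:starloc:ordine:leq}.

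Next I would verify that a Jaffard family satisfies the hypotheses of Proposition~\ref{prop:indip-rhoinj}: completeness (Proposition~\ref{prop:jaffard:basic}\ref{prop:jaffard:basic:complete}) applied with $I=R$ gives $R=\bigcap_{T\in\Theta}T$, and for distinct $A,B\in\Theta$ we have $\ortog{\Theta}(A)\subseteq B$, so $AB\supseteq A\cdot\ortog{\Theta}(A)=K$ by Proposition~\ref{prop:jaffard:basic}\ref{prop:jaffard:basic:complindip}. Thus $\rho_\Theta$ is a well-defined, injective, order-preserving map, and it remains only to see that $\rho_\Theta$ and $\lambda_\Theta$ are mutually inverse. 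For $\star\in\insstar(R)$, the operation $\rho_\Theta(\lambda_\Theta(\star))$ sends a fractional ideal $I$ to $\bigcap_{T\in\Theta}(IT)^{\star_T}=\bigcap_{T\in\Theta}I^\star T$, which equals $I^\star$ by completeness of $\Theta$ applied to the fractional ideal $I^\star$ (reducing to the integral case by clearing a denominator); so $\rho_\Theta\circ\lambda_\Theta=\mathrm{id}_{\insstar(R)}$. Applying this identity to $\rho_\Theta\big((\star^{(T)})_{T\in\Theta}\big)$ and cancelling the injective map $\rho_\Theta$ then gives $\lambda_\Theta\circ\rho_\Theta=\mathrm{id}$ as well, so that $\lambda_\Theta$ and $\rho_\Theta$ are mutually inverse order-preserving bijections; alternatively one can check $\star_U=\star^{(U)}$ directly using the computation $(J\cap R)^\star U=J^{\star^{(U)}}$ from the proof of Proposition~\ref{prop:indip-rhoinj} together with the identity $(J\cap R)U=J$.

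The main obstacle is the extendability step: everything afterwards is essentially formal, whereas there I must be careful that $(I:J)$ and $(I^\star:J^\star)$ are nonzero so that Lemma~\ref{lemma:Tcolon} is applicable — this being exactly the feature of Jaffard overrings (as opposed to arbitrary flat overrings) that makes the general, not-necessarily-finite-type statement go through.
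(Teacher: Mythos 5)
Your proposal is correct and follows essentially the same route as the paper: extendability is obtained from Lemma~\ref{lemma:Tcolon} (your chain $(IT:JT)=(I:J)T\subseteq(I^\star:J^\star)T=(I^\star T:J^\star T)$ is just a slight rearrangement of the paper's $(I^\star T:J^\star T)=(I^\star:J^\star)T=(I^\star:J)T=(I^\star T:JT)$), the identity $\rho_\Theta\circ\lambda_\Theta=\mathrm{id}$ comes from completeness of $\Theta$, and bijectivity is then deduced from the injectivity of $\rho_\Theta$ supplied by Proposition~\ref{prop:indip-rhoinj}. Your explicit checks that the relevant colon modules are nonzero and that $\Theta$ satisfies the hypotheses of Proposition~\ref{prop:indip-rhoinj} are welcome details that the paper leaves implicit.
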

\begin{proof}
We first show that every $\star\in\insstar(R)$ is extendable. Let $T\in\Theta$ and let $I,J$ be ideals of $R$ such that $IT=JT$. Then, using Lemma \ref{lemma:Tcolon}, we have
\begin{equation*}
(I^\star T:J^\star T)=(I^\star:J^\star)T=(I^\star:J)T=(I^\star T:JT)
\end{equation*}
and, since $JT=IT\subseteq I^\star T$, we have $1\in(I^\star T:J^\star T)$, so that $J^\star T\subseteq I^\star T$. Symmetrically, $I^\star T\subseteq J^\star T$, and hence $J^\star T=I^\star T$. By Proposition \ref{prop:starloc:basic}\ref{prop:starloc:basic:equiv}, $\star_T$ is well-defined, and $\star$ is extendable to $T$; in particular, $\lambda_\Theta$ is well-defined.

Moreover, for every $\star\in\insstar(R)$, we have
\begin{equation*}
I^\star=\bigcap_{T\in\Theta}I^\star T=\bigcap_{T\in\Theta}(IT)^{\star_T}
\end{equation*}
using the completeness of $\Theta$ in the first equality and the definition of extension in the second. Thus, $\star=\rho_\Theta\circ\lambda_\Theta(\star)$, i.e., $\rho_\Theta\circ\lambda_\Theta$ is the identity. It follows that $\lambda_\Theta$ is injective and $\rho_\Theta$ is surjective. But $\rho_\Theta$ is injective by Proposition \ref{prop:indip-rhoinj}, so $\lambda_\Theta$ and $\rho_\Theta$ must be bijections.
\end{proof}

The second part of the following corollary is a generalization of \cite[Theorem 2.3]{houston_noeth-starfinite}.
\begin{cor}\label{cor:1dim}
Let $R$ be a one-dimensional integral domain. 
\begin{enumerate}[(a)]
\item $|\insstar(R)|\geq\prod\{|\insstar(R_M)|:M\in\Max(R)\}$;
\item if $R$ is of finite character (for example, if $R$ is Noetherian), then $|\insstar(R)|=\prod\{|\insstar(R_M)|:M\in\Max(R)\}$.
\end{enumerate}
\end{cor}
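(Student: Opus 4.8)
The plan is to apply the results of this section to the family $\Theta := \{R_M \mid M \in \Max(R)\}$ of localizations of $R$ at its maximal ideals.

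First I would record the facts about $\Theta$ that hold for \emph{any} one-dimensional domain $R$: each $R_M$ is a flat overring of $R$ (localizations are flat) and is distinct from $K$ (since $R$ is not a field), and $R = \bigcap_{M\in\Max(R)} R_M$. The one point that requires an argument is that $R_M R_N = K$ whenever $M\neq N$. The ring $R_M R_N$ is a localization of $R$, and a nonzero prime $P$ of $R$ survives in it precisely when $P\subseteq M$ and $P\subseteq N$; but in a one-dimensional domain every nonzero prime is maximal, so $P\subseteq M$ and $P\subseteq N$ force $M=N=P$, a contradiction. Hence the only prime surviving in $R_M R_N$ is $(0)$, that is, $R_M R_N = K$.

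For part (a), these facts are exactly the hypotheses of Proposition \ref{prop:indip-rhoinj}, which then provides an injective map $\rho_\Theta\colon \prod_M \insstar(R_M) \to \insstar(R)$. Taking cardinalities gives $|\insstar(R)| \geq \prod\{|\insstar(R_M)| : M\in\Max(R)\}$. Note that no finiteness hypothesis is needed here, since Proposition \ref{prop:indip-rhoinj} does not require local finiteness.

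For part (b), I would additionally observe that $\Theta$ is complete (the equality $I = \bigcap_M IR_M$ holds in any domain) and locally finite (this is precisely the hypothesis that $R$ is of finite character; when $R$ is one-dimensional Noetherian, $R/xR$ is Artinian for every nonzero $x\in R$, so only finitely many maximal ideals contain $x$, and $R$ is of finite character). Combined with $R_M R_N = K$ for $M\neq N$, Proposition \ref{prop:caratt-jaffard} shows that $\Theta$ is a Jaffard family on $R$, so Theorem \ref{teor:star-jaffard} gives an order-preserving bijection $\insstar(R) \cong \prod_M \insstar(R_M)$, whence the claimed equality of cardinalities. The only mildly delicate step is the identity $R_M R_N = K$; everything else is an assembly of the cited statements.
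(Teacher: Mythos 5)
Your proof is correct and follows essentially the same route as the paper: establish $R_MR_N=K$ for distinct maximal ideals using that nonzero primes have height one, deduce (a) from Proposition \ref{prop:indip-rhoinj}, and deduce (b) by recognizing $\{R_M\mid M\in\Max(R)\}$ as a Jaffard family and invoking Theorem \ref{teor:star-jaffard}. The extra details you supply (the localization argument for $R_MR_N=K$, completeness, and Noetherian implying finite character) are correct elaborations of steps the paper leaves implicit.
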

\begin{proof}
If $M\neq N$ are maximal ideals of $R$, then $R_MR_N=K$, since both $M$ and $N$ have height 1. By Proposition \ref{prop:indip-rhoinj}, there is an injective map from $\insstar(R)$ to the product $\prod\insstar(R_M)$, which in particular implies the first inequality.

If, moreover, $R$ is one-dimensional and of finite character, then $\{R_M\mid M\in\Max(R)\}$ is a Jaffard family, and the claim follows by applying Theorem \ref{teor:star-jaffard}.
\end{proof}

The bijections $\rho_\Theta$ and $\lambda_\Theta$ respect the properties of star operations; see the following Proposition \ref{prop:jaffard-eab} for the eab case.
\begin{teor}\label{teor:jaffard-corresp}
Let $R$ be a domain and $\Theta$ be a Jaffard family on $R$, and let $\star\in\insstar(R)$. Then, $\star$ is of finite type (respectively, semifinite, stable, spectral, Noetherian) if and only if $\star_T$ is of finite type (resp., semifinite, stable, spectral, Noetherian) for every $T\in\Theta$.
\end{teor}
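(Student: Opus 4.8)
The plan is to prove the two implications separately. Since every $\star\in\insstar(R)$ is extendable to every $T\in\Theta$ (Theorem~\ref{teor:star-jaffard}) and each $T$ is flat over $R$ (Proposition~\ref{prop:jaffard:basic}\ref{prop:jaffard:basic:flat}), the forward direction --- $\star$ has the property $\Rightarrow$ each $\star_T$ has it --- is already covered by Proposition~\ref{prop:starloc:basic}\ref{prop:starloc:basic:ft} for finite type and by Proposition~\ref{prop:estensione-prop} for stability, spectrality and Noetherianity. The one case needing a genuinely new argument is semifiniteness. Let $I\subsetneq T$ be a proper (nonzero) $\star_T$-ideal and set $J:=I\cap R$. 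Flatness of $T$ gives $JT=I\cap T=I$, and $J$ is a proper $\star$-ideal because $J^\star\subseteq J^\star T\cap R=(JT)^{\star_T}\cap R=I\cap R=J$. Moreover $JT\cap R=J$, and this forces $J$ to survive in $T$ alone: for $U\in\Theta$, $U\neq T$, flatness of $U$ together with $TU=K$ (from $\ortog{\Theta}(T)\subseteq U$ and Proposition~\ref{prop:jaffard:basic}\ref{prop:jaffard:basic:complindip}) yield $JU=(JT\cap R)U=JTU\cap U=K\cap U=U$. Now $\star$ semifinite gives a prime $\star$-ideal $P\supseteq J$; since $PU\supseteq JU=U$ for all $U\neq T$, Proposition~\ref{prop:jaffard:basic}\ref{prop:jaffard:basic:partitionSpec} forces $PT\neq T$. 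Hence $PT$ is a prime ideal of $T$ (Remark~\ref{oss:def}\ref{oss:def:primi}), it is $\star_T$-closed since $(PT)^{\star_T}=P^\star T=PT$, and it contains $JT=I$; so $\star_T$ is semifinite.

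For the converse I use throughout the identity $I^\star=\bigcap_{T\in\Theta}(IT)^{\star_T}$ (Theorem~\ref{teor:star-jaffard}) and the fact that $I^\star=I$ implies $(IT)^{\star_T}=IT$. Stability is immediate: $(A\cap B)^\star=\bigcap_T(AT\cap BT)^{\star_T}=\bigcap_T\big((AT)^{\star_T}\cap(BT)^{\star_T}\big)=A^\star\cap B^\star$, using flatness and stability of each $\star_T$. For semifiniteness, let $I\subsetneq R$ be a proper $\star$-ideal; as $1\notin I=\bigcap_T(IT)^{\star_T}$ there is a $T_0$ with $(IT_0)^{\star_{T_0}}\neq T_0$, hence $IT_0\neq T_0$, and semifiniteness of $\star_{T_0}$ provides a prime $\star_{T_0}$-ideal $Q\supseteq IT_0$. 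Then $P:=Q\cap R$ is a prime ideal of $R$ containing $I$, and it is a $\star$-ideal: by Remark~\ref{oss:def}\ref{oss:def:primi} one has $Q=PT_0$, so $P$ survives only in $T_0$ (Proposition~\ref{prop:jaffard:basic}\ref{prop:jaffard:basic:partitionSpec}), whence $P^\star T_0=(PT_0)^{\star_{T_0}}=Q$ while $P^\star U=U$ for $U\neq T_0$, giving $P^\star=Q\cap\ortog{\Theta}(T_0)=P$. Spectrality now follows, being equivalent to stable $+$ semifinite.

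For Noetherianity, take an ascending chain $(I_n)_n$ of $\star$-ideals, which we may assume satisfies $I_1\neq(0)$. By local finiteness only finitely many $T\in\Theta$ have $I_1T\neq T$; for each of these, $(I_nT)_n$ is an ascending chain of $\star_T$-ideals and so stabilizes, while $I_nT=T$ for all $n$ for the remaining $T$. Letting $N$ be the maximum of the finitely many stabilization indices, $I_nT=I_NT$ for every $T$ and $n\geq N$, hence $I_n=\bigcap_T I_nT=I_N$. Finally, for finite type, let $\star_f\leq\star$ be the finite-type star operation associated with $\star$; then $(\star_f)_T\leq\star_T$ (Proposition~\ref{prop:starloc:ordine}\ref{prop:starloc:ordine:leq}), and both are of finite type (Proposition~\ref{prop:starloc:basic}\ref{prop:starloc:basic:ft}), so it suffices to check they agree on finitely generated fractional ideals of $T$. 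Any such ideal has the form $\tfrac1s JT$ with $J$ a finitely generated ideal of $R$ and $s\in R\setminus\{0\}$, and $J^{\star_f}=J^\star$ because $J$ itself is finitely generated; hence $(\tfrac1s JT)^{(\star_f)_T}=\tfrac1s J^{\star_f}T=\tfrac1s J^\star T=(\tfrac1s JT)^{\star_T}$. Thus $(\star_f)_T=\star_T$ for all $T\in\Theta$, and injectivity of $\lambda_\Theta$ (Theorem~\ref{teor:star-jaffard}) gives $\star_f=\star$.

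The step I expect to be the main obstacle is the forward implication for semifiniteness: it does \emph{not} hold for extension along an arbitrary flat overring, and the argument hinges on the rather special fact that for a Jaffard overring the contraction $I\cap R$ is $T$-saturated and therefore survives over a single component of $\Theta$, so that a prime $\star$-ideal lying above it is automatically a prime $\star_T$-ideal. Everything else reduces, more or less routinely, to the product decomposition $I^\star=\bigcap_T(IT)^{\star_T}$, flatness, and local finiteness.
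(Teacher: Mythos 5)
Your proof is correct and follows essentially the same route as the paper's: the forward direction reduces to Propositions \ref{prop:starloc:basic} and \ref{prop:estensione-prop} except for semifiniteness, which is handled exactly as in the paper by showing $J=I\cap R$ survives only in $T$ and pushing a prime $\star$-ideal above it back into $T$; the converse direction uses $I^\star=\bigcap_T(IT)^{\star_T}$ together with local finiteness in the same way. The only (harmless) divergence is the finite-type converse, where you give a self-contained argument via $\star_f$ and injectivity of $\lambda_\Theta$ instead of citing \cite{anderson_examples}.
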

\begin{proof}
By Propositions \ref{prop:starloc:basic}\ref{prop:starloc:basic:ft} and \ref{prop:estensione-prop}, if $\star$ is of finite type, stable, spectral or Noetherian so is $\star_T$. If $\star$ is semifinite, let $I$ be a $\star_T$-closed ideal of $T$, and let $J:=I\cap R$. Then $JT=I$, and $J^\star\subseteq I^{\star_T}\cap R=J$, so that there is a prime ideal $Q\supseteq J$ such that $Q^\star=Q$. For every $U\in\Theta$, $U\neq T$, we have $JU=U$; hence $QU=U$, and thus $QT\neq T$; moreover, since $R$ is flat, $QT$ is prime (Remark \ref{oss:def}(\ref{oss:def:primi})). Therefore, $(QT)^{\star_T}=Q^\star T=QT$ is a proper prime $\star_T$-ideal containing $I$, and $\star_T$ is semifinite.

Let now $\star:=\rho_\Theta(\star^{(T)})$.

If each $\star^{(T)}$ is of finite type, then $\star$ is of finite type by \cite{anderson_examples}.

Suppose each $\star^{(T)}$ is semifinite and $I=I^\star$ is a proper ideal of $R$. Then, $1\notin I$, so there is a $T\in\Theta$ such that $(IT)^{\star^{(T)}}\neq T$, and thus there is a prime ideal $P$ of $T$ containing $IT$ such that $P=P^{\star^{(T)}}$. If $Q:=P\cap R$, then 
\begin{equation*}
Q^\star\subseteq(QT)^{\star^{(T)}}\cap R\subseteq P^{\star^{(T)}}\cap R=Q,
\end{equation*}
so that $Q$ is a $\star$-prime ideal of $R$ containing $I$.

If each $\star^{(T)}$ is stable, then, given ideal $I,J$ of $R$, we have
\begin{equation*}
(I\cap J)^\star=\bigcap_{T\in\Theta}((I\cap J)T)^{\star^{(T)}}=\bigcap_{T\in\Theta}(IT)^{\star^{(T)}}\cap \bigcap_{T\in\Theta}(JT)^{\star{(T)}}=I^\star\cap J^\star.
\end{equation*}
Hence, $\star$ is stable. The case of spectral star operation follows since $\star$ is spectral if and only if it is stable and semifinite \cite[Theorem 4]{anderson_overrings_1988}.

Suppose now $\star^{(T)}$ is Noetherian for every $T\in\Theta$ and let $\{I_\alpha:\alpha\in A\}$ be an ascending chain of $\star$-ideals. If $I_\alpha=(0)$ for every $\alpha$ we are done. Otherwise, there is a $\overline{\alpha}$ such that $I_{\overline{\alpha}}\neq(0)$, and thus $I_{\overline{\alpha}}$ (and, consequently, every $I_\alpha$ for $\alpha>\overline{\alpha}$) survives in only a finite number of elements of $\Theta$, say $T_1,\ldots,T_n$. For each $i\in\{1,\ldots,n\}$, the set $\{I_\alpha T_i\}$ is an ascending chain of $\star^{(T_i)}$-ideals, and thus there is a $\alpha_i$ such that $I_\alpha T_i=I_{\alpha_i}T_i$ for every $\alpha\geq\alpha_i$.

Let thus $\widetilde{\alpha}:=\max\{\overline{\alpha},\alpha_i: 1\leq i\leq n\}$. For every $\beta\geq\widetilde{\alpha}$, we have $I_\beta T_i=I_{\alpha_i}T_i=I_{\widetilde{\alpha}}T_i$, while, if $T\neq T_i$ for every $i$, then $I_\beta T=T=I_{\widetilde{\alpha}}T$ since $\beta\geq\overline{\alpha}$. Therefore, $I_\beta=\bigcap_{T\in\Theta}I_\beta T=\bigcap_{T\in\Theta}I_{\widetilde{\alpha}}T=I_{\widetilde{\alpha}}$ and the chain $\{I_\alpha\}$ stabilizes.
\end{proof}

\begin{cor}\label{cor:trasfnoeth}
Let $R$ be a domain and $\Theta$ be a Jaffard family on $R$. If every $T\in\Theta$ is Noetherian, so is $R$.
\end{cor}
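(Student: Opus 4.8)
The plan is to deduce this as an immediate consequence of Theorem \ref{teor:jaffard-corresp} applied to the identity star operation $d$. The key observation — really the only content of the argument — is the dictionary between ``Noetherian ring'' in the usual sense and ``Noetherian star operation'': for any domain $A$, every ideal of $A$ is a $d$-ideal, so the condition ``every ascending chain of $d$-closed ideals of $A$ stabilizes'' is literally the ascending chain condition on the ideals of $A$. Thus $A$ is a Noetherian ring if and only if the identity star operation on $A$ is a Noetherian star operation in the sense of Section \ref{sect:extension}.

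With this in hand the proof runs as follows. Each $T\in\Theta$ is flat over $R$ by Proposition \ref{prop:jaffard:basic}\ref{prop:jaffard:basic:flat}, so by Proposition \ref{prop:starloc:basic}\ref{prop:starloc:basic:id} the identity $d\in\insstar(R)$ is extendable to every $T\in\Theta$ and $d_T$ is the identity star operation on $T$. By hypothesis each $T$ is a Noetherian ring, hence by the dictionary above $d_T$ is a Noetherian star operation on $T$, for every $T\in\Theta$. Theorem \ref{teor:jaffard-corresp} then forces $d$ to be a Noetherian star operation on $R$; applying the dictionary once more (now in the direction ``Noetherian identity star operation $\Rightarrow$ Noetherian ring''), we conclude that $R$ satisfies ACC on ideals, i.e.\ $R$ is Noetherian.

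There is essentially no obstacle here: the statement is a pure specialization of the already-established Theorem \ref{teor:jaffard-corresp}. The only point requiring a moment's care is the (elementary) remark that the Noetherianity of a ring coincides with the Noetherianity of its identity star operation, and that extension sends the identity of $R$ to the identity of each $T$ — both of which are recorded in Proposition \ref{prop:starloc:basic}. One could alternatively give a direct proof, lifting a finitely generated approximation through the decomposition $I=\bigcap_{T\in\Theta}IT$ coming from completeness and using local finiteness to reduce to finitely many $T$'s, but this merely re-runs the Noetherian half of the proof of Theorem \ref{teor:jaffard-corresp}, so invoking that theorem is the economical route.
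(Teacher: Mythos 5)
Your proposal is correct and follows essentially the same route as the paper: identify Noetherianity of a domain with Noetherianity of its identity star operation, note via Proposition \ref{prop:starloc:basic}\ref{prop:starloc:basic:id} that $d$ extends to the identity on each $T$, and invoke Theorem \ref{teor:jaffard-corresp} (the paper phrases the last step through $\rho_\Theta(d_T)=d_R$ and Theorem \ref{teor:star-jaffard}, which is the same content). Nothing is missing.
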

\begin{proof}
A domain $A$ is Noetherian if and only if the identity star operation $d^{(A)}$ is Noetherian. If every $T\in\Theta$ is Noetherian, each $d_T$ is a Noetherian star operation, and thus (by Theorem \ref{teor:jaffard-corresp}) $\rho_\Theta(d_T)$ is Noetherian. However, by Theorem \ref{teor:star-jaffard}, $\rho_\Theta(d_T)=d_R$, and thus $R$ is a Noetherian domain.
\end{proof}

\begin{lemma}\label{lemma:IcapRJcapR}
Let $R$ be an integral domain and let $T$ be a Jaffard overring of $R$. For all nonzero integral ideals $I,J$ of $T$,
\begin{equation*}
(I\cap R)(J\cap R)=IJ\cap R.
\end{equation*}
\end{lemma}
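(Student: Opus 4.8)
The plan is to express both sides in terms of the canonical decomposition attached to a Jaffard family $\Theta$ of $R$ with $T\in\Theta$ (such a $\Theta$ exists because $T$ is a Jaffard overring); the substantive point is that the contracted ideals $I\cap R$ and $J\cap R$ are ``concentrated at $T$'' in the sense that they extend to $I$ (resp.\ $J$) in $T$ and to the whole ring in every other member of $\Theta$. Put $A:=I\cap R$ and $B:=J\cap R$. Since a nonzero ideal of an overring meets $R$ in a nonzero ideal, $A$, $B$, and $AB$ are nonzero integral ideals of $R$.

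\textbf{Key step.} For each $V\in\Theta$ and each nonzero ideal $C$ of $T$ I would compute $(C\cap R)V$ using the flatness of $V$ over $R$ (Proposition~\ref{prop:jaffard:basic}\ref{prop:jaffard:basic:flat}): $(C\cap R)V=CV\cap RV=CV\cap V$. If $V=T$, this is $C\cap T=C$; if $V\neq T$, then from $C=CT$, $TV=K$ (Proposition~\ref{prop:caratt-jaffard}) and $C\neq(0)$ we get $CV=C(TV)=CK=K$, so $(C\cap R)V=K\cap V=V$. Taking $C=I$ and $C=J$ gives
\[
AT=I,\qquad BT=J,\qquad AV=BV=V\ \text{ for every }V\in\Theta\setminus\{T\}.
\]
This is the heart of the proof; the remainder is routine.

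\textbf{Conclusion.} I would then apply the product axiom of a Jaffard family to the nonzero integral ideal $AB$, writing $AB=\prod_{V\in\Theta}\bigl((AB)V\cap R\bigr)$, a finite product by local finiteness. For $V\neq T$ we have $(AB)V=(AV)(BV)=V$, hence $(AB)V\cap R=R$; for $V=T$ we have $(AB)T=(AT)(BT)=IJ$, hence $(AB)T\cap R=IJ\cap R$. Thus every factor except the one indexed by $T$ is $R$, the product collapses, and $AB=IJ\cap R$, which is exactly $(I\cap R)(J\cap R)=IJ\cap R$.

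I do not foresee a genuine obstacle. The only place demanding attention is the Key step, which leans on precisely two properties of Jaffard families: every member is $R$-flat, so that the ``contract-then-extend'' operation $(C\cap R)V$ becomes the bona fide intersection $CV\cap V$; and distinct members multiply to $K$, which forces $CV=K$ as soon as $C$ is a nonzero ideal of the other member. It is also worth noting the degenerate case $|\Theta|=1$, which forces $T=R$ and trivializes the identity, so no special handling is needed there.
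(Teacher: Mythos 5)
Your proof is correct and follows essentially the same route as the paper's: the key computation $(C\cap R)V=CV\cap V$ via flatness, equal to $C$ when $V=T$ and to $V$ when $V\neq T$ (because $CV=C(TV)=CK=K$), is exactly the calculation in the paper. The only difference is in the last line: the paper concludes by computing the extensions of \emph{both} sides and invoking completeness ($X=\bigcap_{U\in\Theta}XU$), whereas you reconstruct $(I\cap R)(J\cap R)$ from the factorization axiom $AB=\prod_{V\in\Theta}\bigl((AB)V\cap R\bigr)$; both are defining properties of a Jaffard family and either one finishes the argument immediately.
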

\begin{proof}
Let $\Theta$ be a Jaffard family containing $T$. Since $\Theta$ is complete, it is enough to show that they are equal when localized on every $U\in\Theta$. We have
\begin{equation*}
(I\cap R)(J\cap R)U=(IU\cap U)(JU\cap U)=\begin{cases}
IJ & \text{~if~}U=T\\
U & \text{~if~}U\neq T
\end{cases}
\end{equation*}
while
\begin{equation*}
(IJ\cap R)U=IJU\cap U=\begin{cases}
IJ & \text{~if~}U=T\\
U & \text{~if~}U\neq T
\end{cases}
\end{equation*}
and thus $(I\cap R)(J\cap R)=IJ\cap R$.
\end{proof}

\begin{lemma}\label{lemma:IcapR}
Let $R$ be an integral domain, $T$ a Jaffard overring of $R$, and let $I$ be a finitely generated integral ideal of $T$. Then, $I\cap R$ is finitely generated (over $R$).
\end{lemma}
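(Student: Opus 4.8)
The plan is to fix a Jaffard family $\Theta$ with $T\in\Theta$, set $J:=I\cap R$, and show that $J$ is generated over $R$ by a suitable \emph{finite} subset of $J$ itself. The key observation is that the extensions $JU$ ($U\in\Theta$) are completely understood: since $T$ is flat over $R$ we have $JT=(I\cap R)T=I$, while for $U\in\Theta$ with $U\neq T$ we have $TU=K$ (because $\ortog{\Theta}(T)\subseteq U$ and $T\cdot\ortog{\Theta}(T)=K$ by Proposition \ref{prop:jaffard:basic}\ref{prop:jaffard:basic:complindip}), hence $IU=ITU=IK=K$ and so $JU=IU\cap U=U$. By completeness of $\Theta$ (Proposition \ref{prop:jaffard:basic}\ref{prop:jaffard:basic:complete}), every integral ideal $\mathfrak{a}$ of $R$ satisfies $\mathfrak{a}=\bigcap_{U\in\Theta}\mathfrak{a}U$; therefore, to conclude $J=J_1$ for a finitely generated ideal $J_1\subseteq J$, it suffices to build $J_1$ so that $J_1U=JU$ for every $U\in\Theta$.

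First I would treat the component at $T$. Since $JT=I$, the set $J$ generates $I$ over $T$, and as $I$ is finitely generated over $T$ finitely many of its elements already do so: pick $x_1,\dots,x_N\in J\subseteq R$ with $(x_1,\dots,x_N)T=I$, and set $J_0:=(x_1,\dots,x_N)R\subseteq J$, so $J_0T=I=JT$. Because each $x_i$ lies in $R$ and $\Theta$ is locally finite, $x_i$ is a unit in all but finitely many members of $\Theta$; hence $\mathcal{B}:=\{U\in\Theta:U\neq T,\ J_0U\neq U\}$ is finite, say $\mathcal{B}=\{U_1,\dots,U_s\}$. These are the only overrings of $\Theta$ at which $J_0$ may still differ from $J$.

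Then I would correct $J_0$ at each $U_j$. Since $U_j\neq T$ we have $JU_j=U_j$, so $1\in JU_j=\sum_{y\in J}yU_j$; writing $1$ as a finite such sum and letting $S_j\subseteq J$ be the (finite) set of $y$'s occurring gives $\bigl(\sum_{y\in S_j}yR\bigr)U_j=U_j$. Now set $J_1:=J_0+\sum_{j=1}^{s}\sum_{y\in S_j}yR$, a finitely generated ideal of $R$ contained in $J$. One checks directly that $J_1T=I$ (it contains $J_0T=I$ and is contained in $JT=I$), that $J_1U_j=U_j$ for $1\leq j\leq s$ (it contains the corrective ideal at $U_j$), and that $J_1U=U$ for every other $U\in\Theta\setminus\{T\}$ (it contains $J_0U=U$). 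Thus $J_1U=JU$ for all $U\in\Theta$, and completeness forces $J_1=J$. The cases $I=(0)$ and $T=R$ (where $\Theta=\{R\}$) are trivial.

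The only genuine obstacle is the finiteness bookkeeping in the second paragraph — that only finitely many $U\in\Theta$ can fail $J_0U=U$ — which is exactly where local finiteness of the Jaffard family enters; the rest reduces to flatness of $T$ and the completeness identity $\mathfrak{a}=\bigcap_U\mathfrak{a}U$. This argument is the Jaffard-family analogue of the fact that in an $h$-local domain an ideal is finitely generated once all of its localizations are, and it also explains why the naive approach — lifting the $n$ given generators of $I$ one at a time — need not work: $I\cap R$ may genuinely require more than $n$ generators, reflecting possible non-triviality of $\Pic(R)$.
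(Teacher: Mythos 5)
Your proof is correct, and its skeleton is the same as the paper's: lift the finitely many generators of $I$ to a finite subset of $I\cap R$ that already generates $I$ over $T$, add finitely many further elements of $I\cap R$ to make the extension trivial away from $T$, and conclude by completeness of the Jaffard family that the resulting finitely generated ideal equals $I\cap R$. The one place where you diverge is in how you certify triviality away from $T$: you invoke local finiteness to show that only finitely many $U\in\Theta$ can have $J_0U\neq U$ and then repair each such $U_j$ with a separate finite relation $1=\sum_{y\in S_j}yu_y$, whereas the paper works with the single overring $S:=\ortog{\Theta}(T)=\bigcap\{U\in\Theta\mid U\neq T\}$, notes that $(I\cap R)S=ITS\cap S=S$ (since $TS=K$), and extracts one finite relation $1=i_1s_1+\cdots+i_ns_n$ with $i_k\in I\cap R$, $s_k\in S$; because $S\subseteq U$ for every $U\neq T$, this single certificate forces $(i_1,\ldots,i_n)U=U$ for all such $U$ at once, so no finiteness bookkeeping is needed at this stage. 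Your version is slightly longer but makes the role of local finiteness explicit; the paper's is slicker because it pushes that information into the already-established identity $T\cdot\ortog{\Theta}(T)=K$. Your concluding remark that one cannot simply lift the $n$ given generators of $I$ one at a time is a fair observation and consistent with both arguments.
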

\begin{proof}
Let $S:=\ortog{\Theta}(T)$, where $\Theta$ is a Jaffard family to which $T$ belongs. Then, by Proposition \ref{prop:jaffard:basic}, $(I\cap R)S=IS\cap S=ITS\cap S=S$, and thus there are $i_1,\ldots,i_n\in I\cap R$, $s_1,\ldots,s_n\in S$ such that $1=i_1s_1+\cdots+i_ns_n$; let $I_0:=(i_1,\ldots,i_n)$.

Let $x_1,\ldots,x_m$ be the generators of $I$ in $T$. Since $(I\cap R)T=IT=I$, for every $x_i$ there are $j_{1i},\ldots,j_{n_ii}\in I\cap R$, $t_{1i},\ldots,t_{n_ii}\in T$ such that $x_i=j_{1i}t_{1i}+\cdots+j_{n_ii}t_{n_ii}$; let $I_i:=(j_{1i},\ldots,j_{n_ii})$. Then, $J:=I_0+I_1+\cdots+I_n$ is a finitely generated ideal contained in $I\cap R$ (since it is generated by elements of $I\cap R$) such that $(I\cap R)T\subseteq JT$ and $(I\cap R)S\subseteq JS$; thus, $I\cap R\subseteq J$. Therefore, $I\cap R=J$ is finitely generated, as claimed.
\end{proof}

\begin{prop}\label{prop:jaffard-eab}
Let $R$ be an integral domain and let $\Theta$ be a Jaffard family on $R$. A $\star\in\insstar(R)$ is eab (resp., ab) if and only if $\star_T$ is eab (resp., ab) for every $T\in\Theta$.
\end{prop}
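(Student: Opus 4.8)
The plan is to prove the two implications separately. The implication ``$\star_T$ eab (resp.\ ab) for every $T\in\Theta$ $\Longrightarrow$ $\star$ eab (resp.\ ab)'' is essentially formal, while the converse is where Lemma \ref{lemma:IcapR} enters and is the more delicate one.

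\textbf{From the $\star_T$ to $\star$.} Suppose each $\star_T$ is eab, and let $F,G,H$ be nonzero finitely generated ideals of $R$ with $(FG)^\star\subseteq(FH)^\star$. By Theorem \ref{teor:star-jaffard}, $\star$ is extendable to every $T\in\Theta$; fixing $T$ and using $(FG)T=(FT)(GT)$ together with the definition of the extension, the hypothesis gives $((FT)(GT))^{\star_T}=(FG)^\star T\subseteq(FH)^\star T=((FT)(HT))^{\star_T}$. Since $FT$ is a nonzero finitely generated ideal of $T$ and $\star_T$ is eab, we obtain $(GT)^{\star_T}\subseteq(HT)^{\star_T}$. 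As $T$ was arbitrary, intersecting over $\Theta$ and invoking completeness (Proposition \ref{prop:jaffard:basic}\ref{prop:jaffard:basic:complete}) yields $G^\star=\bigcap_{T\in\Theta}(GT)^{\star_T}\subseteq\bigcap_{T\in\Theta}(HT)^{\star_T}=H^\star$, so $\star$ is eab. The ab case is identical: even for non-finitely-generated $G,H$, the modules $GT,HT$ are still nonzero fractional ideals of $T$.

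\textbf{From $\star$ to the $\star_T$.} Fix $T\in\Theta$ and let $F',G',H'$ be nonzero finitely generated ideals of $T$ with $(F'G')^{\star_T}\subseteq(F'H')^{\star_T}$; in the ab case $G',H'$ are instead arbitrary nonzero fractional ideals, and I first choose $0\neq d\in R$ with $dG',dH'\subseteq T$ and replace $G',H'$ by $dG',dH'$, which leaves both the hypothesis and the desired conclusion unchanged, so one may assume $G',H'$ to be integral ideals of $T$. Set $F:=F'\cap R$, $G:=G'\cap R$, $H:=H'\cap R$; these are nonzero ideals of $R$ with $FT=F'$, $GT=G'$, $HT=H'$ by flatness, and $F$ (and, in the eab case, also $G$ and $H$) is finitely generated over $R$ by Lemma \ref{lemma:IcapR}. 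The key claim is $(FG)^\star\subseteq(FH)^\star$. Extending to $T$, $(FG)^\star T=((FG)T)^{\star_T}=(F'G')^{\star_T}\subseteq(F'H')^{\star_T}=(FH)^\star T$. For $U\in\Theta$ with $U\neq T$ we have $TU=K$ (Proposition \ref{prop:caratt-jaffard}), so $F'U=F'TU=F'K=K$, whence $FU=(F'\cap R)U=F'U\cap U=U$ by flatness of $U$; likewise $GU=U$, so $(FG)U=(FU)(GU)=U$ and therefore $(FG)^\star U=U=(FH)^\star U$. Thus $(FG)^\star U\subseteq(FH)^\star U$ for every $U\in\Theta$, and completeness gives the claim. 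Now eab (resp.\ ab) of $\star$ applied to $F,G,H$ gives $G^\star\subseteq H^\star$, and extending to $T$ gives $(G')^{\star_T}=G^\star T\subseteq H^\star T=(H')^{\star_T}$, as wanted.

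The only step that is not pure bookkeeping with the extension operator, flatness, and completeness of $\Theta$ is that the contraction to $R$ of a finitely generated ideal of $T$ is again finitely generated, i.e.\ Lemma \ref{lemma:IcapR}, and this is exactly the point where the Jaffard hypothesis is essential; I expect this to be the main obstacle. A secondary technical wrinkle is the ab case, where one must clear denominators before contracting, since $(\,\cdot\,)\cap R$ does not recover genuinely fractional ideals of $T$.
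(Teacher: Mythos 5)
Your proof is correct and follows essentially the same route as the paper's: both directions hinge on extendability via Theorem \ref{teor:star-jaffard}, contraction of ideals of $T$ back to $R$, Lemma \ref{lemma:IcapR} for finite generation of the contractions, and completeness of $\Theta$. The only cosmetic differences are that you verify $(FG)^\star\subseteq(FH)^\star$ by checking it on each $U\in\Theta$ instead of invoking Lemma \ref{lemma:IcapRJcapR} as the paper does, and that you spell out the denominator-clearing needed in the ab case, which the paper leaves implicit.
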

\begin{proof}
$(\Longrightarrow)$. Suppose $(IJ)^{\star_T}\subseteq(IL)^{\star_T}$ for some finitely generated ideals $I,J,L$ of $T$ (which we can suppose contained in $T$). Since 
\begin{equation*}
(IJ\cap R)^\star T=((IJ\cap R)T)^{\star_T}=(IJ)^{\star_T}
\end{equation*}
(and the same happens for $IL$), we have $(IJ\cap R)^\star T\subseteq (IL\cap R)^\star T$, and so 
\begin{equation*}
(IJ\cap R)^\star T\cap R\subseteq (IL\cap R)^\star T\cap R.
\end{equation*}
However, both $IJ\cap R$ and $IL\cap R$ survive (among the ideals of $\Theta$) only in $T$, so that 
\begin{equation*}
(IJ\cap R)^\star T\cap R=(IJ\cap R)^\star=((I\cap R)(J\cap R))^\star
\end{equation*}
by Lemma \ref{lemma:IcapRJcapR}, and thus 
\begin{equation*}
((I\cap R)(J\cap R))^\star\subseteq ((I\cap R)(L\cap R))^\star.
\end{equation*}
Since $I$ is finitely generated, by Lemma \ref{lemma:IcapR} so is $I\cap R$; the same happens for $J\cap R$ and $L\cap R$. Hence, since $\star$ is eab, $(J\cap R)^\star\subseteq(L\cap R)^\star$, and thus 
\begin{equation*}
J^{\star_T}=(J\cap R)^\star T\subseteq(L\cap R)^\star T=L^{\star_T}.
\end{equation*}
Hence, $\star_T$ is eab.

$(\Longleftarrow)$. Suppose $(IJ)^\star\subseteq(IL)^\star$. Then, $(IJ)^\star T\subseteq(IL)^\star T$, i.e., $(IJT)^{\star_T}\subseteq(ILT)^{\star_T}$ for every $T\in\Theta$. Since $\star_T$ is eab, this implies that $(JT)^{\star_T}\subseteq(LT)^{\star_T}$ for every $T\in\Theta$; since $H^\star=\bigcap_{T\in\Theta}(HT)^{\star_T}$, it follows that $J^\star\subseteq L^\star$, and $\star$ is eab.

The same reasoning applies for the ab case.
\end{proof}

Following \cite{hhp_m-canonical}, we say that an ideal $A$ is $m$-canonical if $I=(A:(A:I))$ for every fractional ideal $I$ of $R$. The following proposition can be seen as a generalization of \cite[Theorem 6.7]{hhp_m-canonical} to domains that are not necessarily integrally closed.
\begin{prop}\label{prop:starloc:mcan}
Let $R$ be a domain. Then $R$ admits an $m$-canonical ideal if an only if $R$ is $h$-local, $R_M$ admits an $m$-canonical ideal for every $M\in\Max(R)$ and $|\insstar(R_M)|\neq 1$ for only a finite number of maximal ideals of $M$.
\end{prop}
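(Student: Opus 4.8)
The plan is to encode the existence of an $m$-canonical ideal as the identity star operation attached to a fractional ideal and to move this identity across a Jaffard family via Theorem \ref{teor:star-jaffard}. First I would recall that, for a nonzero fractional ideal $A$ of $R$ with $(A:A)=R$, the map $\star_A\colon I\mapsto(A:(A:I))$ is a star operation on $R$, and that $A$ is $m$-canonical precisely when $\star_A$ is the identity $d$ (the condition $(A:A)=R$ being automatic for an $m$-canonical ideal, by taking $I=R$). The crucial preliminary observation is that over a Jaffard overring $\star_A$ ``localizes like'' the ideal $A$: if $T$ is a Jaffard overring of $R$ and $A\neq(0)$, then for every nonzero fractional ideal $I$ of $R$ the colon ideals $(A:I)$ and $(A:(A:I))$ are nonzero, so two applications of Lemma \ref{lemma:Tcolon} give $(A:(A:I))T=(AT:(A:I)T)=(AT:(AT:IT))$; since also $(AT:AT)=(A:A)T$, this shows $\star_A$ is extendable to $T$ with $(\star_A)_T=\star_{AT}$. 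Hence $A$ is $m$-canonical for $R$ iff $\star_A=d$, while $AT$ is $m$-canonical for $T$ iff $(\star_A)_T=d_T$; this dictionary is what both implications run on.

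For $(\Longrightarrow)$ I would begin from the known fact that a domain admitting an $m$-canonical ideal is $h$-local (see \cite{hhp_m-canonical}). Then $\Theta:=\{R_M\mid M\in\Max(R)\}$ is a Jaffard family, so by Theorem \ref{teor:star-jaffard} every star operation on $R$ — in particular $\star_A=d$ — is extendable to each $R_M$; since $(\star_A)_{R_M}=\star_{AR_M}$ and $d$ extends to the identity (Proposition \ref{prop:starloc:basic}\ref{prop:starloc:basic:id}), this gives $\star_{AR_M}=d_{R_M}$, i.e.\ $AR_M$ is $m$-canonical for $R_M$, for every $M$. Finally, $A$ is a nonzero integral ideal, hence by finite character lies in only finitely many maximal ideals; for all other $M$ one has $AR_M=R_M$, so $d_{R_M}=\star_{AR_M}=\star_{R_M}$, the $v$-operation of $R_M$, whence $v_{R_M}=d_{R_M}$ and therefore $|\insstar(R_M)|=1$ there.

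For $(\Longleftarrow)$ I would patch together local $m$-canonical ideals. Since $R$ is $h$-local, $\Theta:=\{R_M\mid M\in\Max(R)\}$ is a Jaffard family; for each $M$ pick an $m$-canonical ideal $A_M$ of $R_M$, choosing $A_M:=R_M$ when $|\insstar(R_M)|=1$ (then $v_{R_M}=d_{R_M}$, so $R_M$ is $m$-canonical for itself) and choosing $A_M$ to be a nonzero \emph{integral} $m$-canonical ideal of $R_M$ for the finitely many remaining $M$ (rescaling by a nonzero element of $K$ preserves $m$-canonicity). Put $A:=\bigcap_M A_M$. Since $A_M=R_M$ off a finite set $\{M_1,\dots,M_n\}$ and each chosen $A_M$ lies in $R_M$, one has $A\subseteq\bigcap_M R_M=R$, and $A$ contains the nonzero ideal $\bigcap_i(A_{M_i}\cap R)$ of $R$; so $A$ is a nonzero integral ideal. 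By Lemma \ref{lemma:intersez-ritorno}, $AR_M=A_M$ for every $M$; localizing $(A:A)$ (which contains $1$) at each $R_M$ and using completeness of $\Theta$ yields $(A:A)=R$, so $\star_A$ is a star operation, and the dictionary gives $(\star_A)_{R_M}=\star_{A_M}=d_{R_M}$ for all $M$. Since $\lambda_\Theta$ is injective (Theorem \ref{teor:star-jaffard}) and $d_R$ likewise extends to $d_{R_M}$ at every $M$, this forces $\star_A=d_R$, i.e.\ $A$ is $m$-canonical for $R$.

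The step I expect to be the genuine obstacle is the opening of $(\Longrightarrow)$, namely the implication ``admits an $m$-canonical ideal $\Rightarrow R$ is $h$-local'': the Jaffard apparatus only engages once $\{R_M\}$ is known to be a Jaffard family, and nothing developed in the earlier sections supplies $h$-localness, so I would import this from \cite{hhp_m-canonical}. The remaining points are routine bookkeeping: keeping the colon ideals in sight so that Lemma \ref{lemma:Tcolon} (valid only when the colon is nonzero) applies, and checking that the patched ideal in $(\Longleftarrow)$ is nonzero and localizes back to the $A_M$.
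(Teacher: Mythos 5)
Your proposal is correct and follows essentially the same route as the paper: both directions rest on importing $h$-locality from \cite{hhp_m-canonical}, on Lemma \ref{lemma:Tcolon} to identify $(A:(A:I))R_M$ with $(AR_M:(AR_M:IR_M))$, and on Lemma \ref{lemma:intersez-ritorno} plus completeness of the Jaffard family $\{R_M\}$ to glue the local $m$-canonical ideals back together. The only difference is cosmetic: you package the computation as the statement $(\star_A)_{R_M}=\star_{AR_M}$ and invoke the injectivity of $\lambda_\Theta$ from Theorem \ref{teor:star-jaffard}, where the paper carries out the identical colon-ideal calculation directly on ideals.
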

\begin{proof}
Suppose $A$ is $m$-canonical. Then $R$ is $h$-local by \cite[Proposition 2.4]{hhp_m-canonical}; moreover, if $I$ is a $R_M$-fractional ideal, then $I=JR_M$ for some $R$-fractional ideal, and thus
\begin{equation*}
(AR_M:(AR_M:I))=(AR_M:(AR_M:JR_M))=\eql
=(AR_M:(A:J)R_M)=(A:(A:J))R_M=JR_M=I
\end{equation*}
applying Lemma \ref{lemma:Tcolon} (which is applicable since $R$ $h$-local implies that $R_M$ is a Jaffard overring of $R$). If $AR_M=R_M$, it follows that $R_M$ is an $m$-canonical ideal for $R_M$, and thus that the $v$-operation on $R_M$ is the identity, or equivalently that $|\insstar(R_M)|=1$; hence, if $|\insstar(R_M)|\neq 1$ then $AR_M\neq R_M$. But this can happen only for a finite number of $M$, since $R$ is $h$-local and thus of finite character.

Conversely, suppose that the three hypotheses hold. For every $M\in\Max(R)$, let $J_M$ be an $m$-canonical ideal of $R_M$, and define
\begin{equation*}
I_M:=\begin{cases}
R_M & \text{~if~}|\insstar(R_M)|=1\\
J_M & \text{~if~}|\insstar(R_M)|>1
\end{cases}
\end{equation*}
Note that, if $|\insstar(R_M)|=1$, then $R_M$ is $m$-canonical for $R_M$, and thus $I_M$ is $m$-canonical for every $M$.

The ideal $J:=\bigcap_{P\in\Max(R)}I_P$ of $R$ is nonzero, and by Lemma \ref{lemma:intersez-ritorno} $JR_M=I_M$ for every maximal ideal $M$. If $L$ is an ideal of $R$ then, for every maximal ideal $M$,
\begin{equation*}
(J:(J:L))R_M=(JR_M:(JR_M:LR_M))=(I_M:(I_M:LR_M))=LR_M,
\end{equation*}
so that
\begin{equation*}
(J:(J:L))=\bigcap_{M\in\Max(R)}(J:(J:L))R_M=\bigcap_{M\in\Max(R)}LR_M=L.
\end{equation*}
Therefore, $J$ is an $m$-canonical ideal of $R$.
\end{proof}

\begin{oss}
The results in Sections \ref{sect:extension} and \ref{sect:jaff-star} can be generalized in two different directions.

On the one hand, we can consider, instead of star operations, other classes of closure operations, for example semiprime or semistar operations. In both cases, the definitions of extendability and the results in Section \ref{sect:extension} carry over without modifications, noting that the equalities $(I^c:J^c)=(I^c:J)$ and $(I^\star:J^\star)=(I^\star:J)$ holds when $c$ and $\star$ are, respectively, a semiprime or a semistar operation.

However, the behaviour of these two classes differs when we come to Jaffard families. In one case there is no problem: with the obvious modifications, all result of Section \ref{sect:jaff-star} hold for the set $\inssemiprime(R)$ of semiprime operations. For example, this means that we can analyze the structure of the semiprime operation on a Dedekind domain $D$ almost directly from the structure of $\inssemiprime(V)$, for $V$ a discrete valuation ring, shortening the analysis done in \cite[Section 3]{vassilev_structure_2009}.

The case of semistar operations is much more delicate: indeed, the result corresponding to  Theorem \ref{teor:star-jaffard} is \emph{not} true for $\inssemistar(R)$, meaning that a semistar operation on $R$ may not be extendable to a Jaffard overring $T$ of $R$. For example, let $\star$ be the semistar operation defined by
\begin{equation*}
I^\star=\begin{cases}
I & \text{if~}I\in\insfracid(R)\\
K & \text{otherwise}.
\end{cases}
\end{equation*}
If $T\neq R$ is a Jaffard overring of $R$, then it is not a fractional ideal of $R$ (for otherwise $T\cdot\ortog{\Theta}(T)=K$ would imply $\ortog{\Theta}(T)=K$); however, we have $RT=TT$, while
\begin{equation*}
R^\star T=T\neq K=T^\star T.
\end{equation*}
Hence, $\star$ is not extendable to $T$. The exact point in which the proof of Theorem \ref{teor:star-jaffard} fails is the possibility of using Lemma \ref{lemma:Tcolon}, because the equality $IT=JT$ does not imply that $(I:J)\neq(0)$. However, if we restrict to finite-type semistar operations, the analogue of Theorem \ref{teor:star-jaffard} does hold: indeed, a proof analogous to the one of Proposition \ref{prop:starloc:basic}\ref{prop:starloc:basic:ft} shows that finite-type operations are extendable, and thus the proof of Theorem \ref{teor:star-jaffard} continues without problems.

A second way of generalizing these results is by considering, beyond the order structure, also a \emph{topological} structure on $\insstar(R)$: mimicking the definition of the Zariski topology on $\inssemistar(R)$ given in \cite{topological-cons}, we can define a topology on $\insstar(R)$ by declaring open the sets of the form
\begin{equation*}
V_I:=\{\star\in\insstar(R)\mid 1\in I^\star\},
\end{equation*}
as $I$ ranges among the fractional ideals of $R$. In particular, Theorem \ref{teor:star-jaffard} can be interpreted at the topological level: if $\Theta$ is a Jaffard family of $R$, then $\lambda_\Theta$ and $\rho_\Theta$ are homeomorphisms between $\insstar(R)$ and the space $\prod_{T\in\Theta}\insstar(T)$ endowed with the product topology.
\end{oss}

\section{Application to Pr\"ufer domains}\label{sect:starloc:prufer}
Theorem \ref{teor:star-jaffard} allows one to split the study of the set $\insstar(R)$ of star operations on $R$ into the study of the sets $\insstar(T)$, as $T$ ranges among the members of a Jaffard family $\Theta$. Obviously, this result isn't quite useful if we don't know how to find Jaffard families, or if studying $\insstar(T)$ is as complex as studying $\insstar(R)$. The purpose of this section is to show that, in the case of (some classes of) Pr\"ufer domains, we can resolve the first question, and we can at least make some progress on the second, proving more explicit results on $\insstar(R)$. We shall employ a method similar to the one used in \cite[Sections 3-5]{hmp_finite}

Let now $R$ be a Pr\"ufer domain with quotient field $K$. We say that two maximal ideals $M,N$ are \emph{dependent} if $R_MR_N\neq K$, or equivalently if $M\cap N$ contains a nonzero prime ideal. Since the spectrum of $R$ is a tree, being dependent is an equivalence relation; we indicate the equivalence classes by $\Delta_\lambda$, as $\lambda$ ranges in $\Lambda$. We also define $T_\lambda:=\bigcap\{R_P\mid P\in\Delta_\lambda\}$. We call the set $\Theta:=\{T_\lambda\mid \lambda\in\Lambda\}$ the \emph{standard decomposition} of $R$.

\begin{lemma}\label{lemma:dimfin}
Let $R$ be a finite-dimensional Pr\"ufer domain. Then, $\Delta\subseteq\Max(R)$ is an equivalence class with respect to dependence if and only if $\Delta=V(P)\cap\Max(R)$ for some height-one prime $P$ of $R$.
\end{lemma}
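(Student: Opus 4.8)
The plan is to exploit two structural features of a finite-dimensional Pr\"ufer domain $R$. First, $\Spec(R)$ is a tree, so for each $M\in\Max(R)$ the set of primes contained in $M$ is a chain, because $R_M$ is a valuation domain. Second, since $\dim R$ is finite this chain is finite, so when $M\neq(0)$ it has a (unique) minimal nonzero member, which is a height-one prime contained in $M$. (If $\dim R=0$ then $R=K$ and both conditions in the statement are vacuous, so I may assume $\dim R\geq 1$.) I would phrase everything using the equivalent description of dependence from the text: $M$ and $N$ are dependent if and only if $M\cap N$ contains a nonzero prime.

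For the implication $(\Leftarrow)$, I would take $\Delta=V(P)\cap\Max(R)$ with $P$ of height one. This set is nonempty since $P\neq(0)$ lies under some maximal ideal. If $M,N\in\Delta$, then $P\subseteq M\cap N$ is a nonzero prime, so $M$ and $N$ are dependent. Conversely, if $M\in\Delta$ and $N\in\Max(R)$ is dependent on $M$, then $M\cap N$ contains a nonzero prime $Q$, which by the remarks above contains a height-one prime $P'$, and $P'\subseteq M\cap N$. Now $P$ and $P'$ are two height-one primes contained in $M$; since the primes below $M$ form a chain they are comparable, hence equal, so $P=P'\subseteq N$ and $N\in\Delta$. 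Thus $\Delta$ coincides with the dependence class of each of its elements, hence is an equivalence class.

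For $(\Rightarrow)$, given an equivalence class $\Delta$ I would pick $M\in\Delta$ (so $M\neq(0)$, as $\dim R\geq 1$) and let $P$ be the height-one prime contained in $M$ coming from the finite chain of primes below $M$. By the implication $(\Leftarrow)$ just proved, $V(P)\cap\Max(R)$ is an equivalence class, and it contains $M$ because $P\subseteq M$; since the equivalence classes partition $\Max(R)$, the one containing $M$ is unique, so $\Delta=V(P)\cap\Max(R)$.

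I expect the only delicate point to be the extraction of a height-one prime below a given nonzero prime, and this is precisely where finite-dimensionality enters: in a Pr\"ufer domain the primes inside a fixed maximal ideal are totally ordered, so a finite chain $(0)\subsetneq Q_1\subsetneq\cdots\subsetneq M$ forces $Q_1$ to have height exactly one. Apart from this bookkeeping and the trivial zero-dimensional case, there is no substantial obstacle.
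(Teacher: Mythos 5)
Your proof is correct and follows essentially the same route as the paper: both directions rest on extracting, via the tree property and finite dimensionality, the unique height-one prime below each maximal ideal, and then checking that dependence is governed by that prime. The only cosmetic difference is that you deduce the forward implication from the backward one together with the partition property of equivalence classes, whereas the paper re-argues it directly via uniqueness of the height-one prime; the content is the same.
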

\begin{proof}
Suppose $\Delta=V(P)\cap\Max(R)$. If $M,N\in\Delta$, then $P\subseteq M\cap N$; conversely, since $P$ has height 1, $M\in\Delta$ and $Q\subseteq M\cap N$ imply that $P\subseteq Q$ (since the spectrum of $R$ is a tree).

On the other hand, suppose $\Delta=\Delta_\lambda$ for some $\lambda$, and let $M,N\in\Delta$. Since $\Spec(R)$ is a tree and $\dim(R)<\infty$, both $M$ and $N$ contain a unique height-one prime, respectively (say) $P_M$ and $P_N$; if $P_M\neq P_N$, then $M\cap N$ cannot contain a nonzero prime, and thus $M$ and $N$ are not dependent, against the hypothesis $M,N\in\Delta$. Therefore, the height-1 prime contained in the members of $\Delta$ is unique, and $\Delta=V(P)\cap\Max(R)$.
\end{proof}

\begin{prop}\label{prop:Tlambda}
Let $R$ be a Pr\"ufer domain, and suppose that
\begin{enumerate}[(a)]
\item\label{prop:Tlambda:noeth} $\Max(R)$ is a Noetherian space; or
\item\label{prop:Tlambda:semiloc} $R$ is semilocal.
\end{enumerate}
Then, the standard decomposition $\Theta$ of $R$ is a Jaffard family of $R$.
\end{prop}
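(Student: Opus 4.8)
The plan is to apply the criterion of Proposition~\ref{prop:caratt-jaffard}: since $R$ is Pr\"ufer every overring is flat, so it suffices to check that $\Theta=\{T_\lambda\}$ is complete, that $T_\lambda T_\mu=K$ for $\lambda\neq\mu$, and that $\Theta$ is locally finite (each $T_\lambda\neq K$ automatically, being contained in $R_M\subsetneq K$ for any $M\in\Delta_\lambda$, once $R$ is not a field). The first two conditions should not use the hypotheses (a)/(b). Completeness follows from a sandwich: for any ideal $I$ one has $I\subseteq\bigcap_\lambda IT_\lambda$ because $R\subseteq T_\lambda$, while $IT_\lambda\subseteq IR_M$ for each $M\in\Delta_\lambda$ because $T_\lambda\subseteq R_M$, so
\[
\bigcap_\lambda IT_\lambda\ \subseteq\ \bigcap_\lambda\bigcap_{M\in\Delta_\lambda}IR_M\ =\ \bigcap_{M\in\Max(R)}IR_M\ =\ I .
\]
For $T_\lambda T_\mu=K$ I would argue by contradiction: were it false, some valuation overring $V\subsetneq K$ would contain $T_\lambda$ and $T_\mu$; the center $Q$ of $V$ on $T_\lambda$ is then a nonzero prime of $T_\lambda$, so $(T_\lambda)_Q=R_{Q\cap R}$ by Remark~\ref{oss:def}(\ref{oss:def:primi}), and $Q\cap R$, being a nonzero prime of $R$ that survives in $T_\lambda$, lies below some $M\in\Delta_\lambda$; hence $R_M\subseteq R_{Q\cap R}\subseteq V$, and symmetrically $R_N\subseteq V$ for some $N\in\Delta_\mu$. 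Since $\lambda\neq\mu$, $M$ and $N$ are independent, so $K=R_MR_N\subseteq V$, a contradiction.

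Everything then reduces to local finiteness: each $0\neq x\in R$ should be a non-unit in only finitely many $T_\lambda$. Put $Y:=V(x)\cap\Max(R)$; since $x^{-1}\notin T_\lambda=\bigcap_{M\in\Delta_\lambda}R_M$ iff $x^{-1}\notin R_M$ for some $M\in\Delta_\lambda$ iff $x\in M$ for some $M\in\Delta_\lambda$, the condition "$x$ is a non-unit in $T_\lambda$" is equivalent to $\Delta_\lambda\cap Y\neq\emptyset$, so it is enough to show $Y$ meets only finitely many classes. Under (b) this is trivial, as $R$ semilocal makes $\Max(R)$, and hence the set of classes, finite. Under (a), $Y$ is closed in the Noetherian space $\Max(R)$, hence Noetherian, so it has finitely many irreducible components $C_1,\dots,C_n$. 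The closure of $C_i$ in $\Spec(R)$ is irreducible, hence equals $V(P_i)$ for a prime $P_i$; since $C_i\subseteq Y\subseteq V(x)$ and $V(x)$ is closed in $\Spec(R)$, we get $V(P_i)\subseteq V(x)$, so $x\in P_i$ and in particular $P_i\neq(0)$. But then any two maximal ideals in $V(P_i)\cap\Max(R)\supseteq C_i$ share the nonzero prime $P_i$, hence are dependent, so $C_i$ lies in a single class; therefore $Y=\bigcup_i C_i$ meets at most $n$ classes, and $\Theta$ is locally finite. Proposition~\ref{prop:caratt-jaffard} then gives the conclusion.

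The step I expect to be the real obstacle is exactly this last argument for case (a). The naive route — cover $Y$ by the pairwise disjoint, closed subsets $\Delta_\lambda\cap Y$ and invoke quasi-compactness of the Noetherian space $Y$ — does not obviously work, because those subsets need not be open in $Y$ (a $T_1$ Noetherian space can carry infinitely many disjoint closed subsets, witness the cofinite topology). Passing to irreducible components is what rescues it: the generic point of the $\Spec(R)$-closure of a component meeting $V(x)$ is forced to be a \emph{nonzero} prime, and a nonzero prime lies below maximal ideals of one dependence class only, so finitely many components force finitely many classes. The remaining ingredients — flatness, completeness, and $T_\lambda T_\mu=K$ — are comparatively routine, the last resting on the description of the primes of the flat overring $T_\lambda$ together with the definition of dependence.
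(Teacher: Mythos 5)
Your reduction to Proposition \ref{prop:caratt-jaffard}, your completeness argument, and your treatment of local finiteness are all sound; in fact the irreducible-components argument for case (a) is a clean alternative to the paper's route, which instead invokes the finiteness of the set of minimal primes of $xR$ together with the fact that each prime of $R$ survives in only one member of $\Theta$. The genuine gap is in the step you explicitly flag as \emph{not} needing hypotheses (a)/(b): in the proof that $T_\lambda T_\mu=K$ you assert that $Q\cap R$, ``being a nonzero prime of $R$ that survives in $T_\lambda$, lies below some $M\in\Delta_\lambda$''. This is not automatic for an intersection $T=\bigcap_{M\in\Delta}R_M$: the set of primes of $R$ surviving in $T$ always contains, but need not equal, the set of primes lying under some member of $\Delta$. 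The discrepancy is the same phenomenon as $\bigcap_{M\neq M_0}R_M=R$ in a non-Dedekind almost Dedekind domain, where $M_0$ survives in the intersection although it lies under no member of the defining family; nothing in your argument excludes an analogous ``limit'' prime for $T_\lambda$.

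This missing step is exactly where the paper uses the hypotheses. Under either (a) or (b) the class $\Delta_\lambda$ is quasi-compact, hence the spectral semistar operation $I\mapsto\bigcap_{M\in\Delta_\lambda}IR_M$ is of finite type by \cite[Corollary 4.6]{localizing-semistar}; since every finitely generated ideal of a Pr\"ufer domain is invertible, this operation must coincide with $I\mapsto IT_\lambda$, and therefore $QT_\lambda=T_\lambda$ for every prime $Q$ not contained in any $M\in\Delta_\lambda$. That is precisely the statement you need (it yields both $\Spec(T_\lambda)=\{PT_\lambda\mid P\subseteq M\text{ for some }M\in\Delta_\lambda\}$ and, via Remark \ref{oss:def}(\ref{oss:def:primi}), the conclusion $R_M\subseteq V$ in your contradiction argument). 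So your overall architecture survives, but you must insert this compactness/finite-type argument (or some substitute) into the proof of $T_\lambda T_\mu=K$, and you should retract the claim that this condition is independent of the hypotheses --- as written, that part of the proof is incomplete.
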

\begin{proof}
Since $R$ is Pr\"ufer, every overring of $R$ is flat \cite[Theorem 1.1.1]{fontana_libro}, and this in particular applies to the $T\in\Theta$.

We claim that, under both hypotheses, if $T=T_\lambda\in\Theta$, then $\Spec(T)=\{PT\mid P\subseteq M\text{~for some~}M\in\Delta_\lambda\}$. Indeed, in both cases every $\Delta_\lambda$ is compact: if $\Max(R)$ is Noetherian this is immediate, while if $R$ is semilocal they are finite and thus compact. Hence, the semistar operation $\star_\Delta$ is of finite type \cite[Corollary 4.6]{localizing-semistar}, and $R^{\star_\Delta}=T$; since the unique finite-type (semi)star operation on a Pr\"ufer domain is the identity (since all finitely generated ideals are invertible), it follows that $\star_\Delta$ is just the map $I\mapsto IT$, and thus $QT=T$ if $Q$ is not contained in any $M\in\Delta$. Therefore, no prime ideal $P$ of $R$ survives in two different members of $\Theta$; thus, $PT_\lambda T_\mu=T_\lambda T_\mu$ if $\lambda\neq\mu$ are in $\Lambda$. Hence, $T_\lambda T_\mu=K$.

We need to show that $\Theta$ is locally finite. If $R$ is semilocal then $\Theta$ is finite, and in particular locally finite; suppose $\Max(R)$ is Noetherian. For every $x\in R$, $x\neq 0$, the ideal $xR$ has only a finite number of minimal primes (this follows, for example, from the proof of \cite[Chapter 4, Corollary 3, p.102]{bourbaki_ac} or \cite[Chapter 6, Exercises 5 and 7]{atiyah}); in particular, since each prime survives in only one $T\in\Theta$, the family $\Theta$ is of finite character.

Hence, in both case $\Theta$ is a Jaffard family  by Proposition \ref{prop:caratt-jaffard}.
\end{proof}

\begin{oss}\label{oss:prufjaff}
~\\
\begin{enumerate}
\item If $R$ is a Pr\"ufer domain that is both of finite character and finite-dimensional, then $\Spec(R)$ (and so $\Max(R)$) is Noetherian. Indeed, if $I$ is a nonzero radical ideal of $R$, then $V(I)$ is finite, and thus every ascending chain of radical ideals must stop; by \cite[Chapter 6, Exercise 5]{atiyah}, this implies Noetherianity.
\item\label{oss:prufjaff:finerJaff} 
The standard decomposition $\Theta$ of $R$ is the ``finest'' Jaffard family of $R$, in the sense that the partition of $\Max(R)$ determined by $\Theta$ (see Remark \ref{oss:Matlis}) is the finer partition that can be induced by a Jaffard family; this follows exactly from the definition of the dependence relation.
\item\label{oss:prufjaff:nonminimalbranch} In general, the standard decomposition of $R$ need not be a Jaffard family of $R$. For example, let $R$ be an almost Dedekind domain which is not Dedekind. Since $R$ is one-dimensional, no two maximal ideals are dependent, and thus each $T_\lambda$ has the form $R_M$ for some maximal ideal $M$. However, $\Theta$ is not a Jaffard family, since it is not locally finite (if it were, $R$ would be a Dedekind domain). Indeed, Example \ref{ex:ad} shows that not every star operation is extendable to every $R_M$.
\end{enumerate}
\end{oss}

\subsection{Cutting the branch}
Let $R$ be a finite-dimensional Pr\"ufer domain whose standard decomposition $\Theta$ is a Jaffard family. By Lemma \ref{lemma:dimfin}, every $T\in\Theta$ will have a nonzero prime ideal $P$ contained in all its maximal ideals; moreover, by Remark \ref{oss:prufjaff}(\ref{oss:prufjaff:finerJaff}), $T$ does not admit a further decomposition. On the other hand, it may be possible that $T/P$ has a nontrivial standard decomposition that is still a Jaffard family; thus, if we could relate $\insstar(T)$ with $\insstar(T/P)$, we could (in principle) simplify the study of $\insstar(T)$.

\begin{lemma}\label{lemma:prufer-jac}
Let $R$ be a Pr\"ufer domain whose Jacobson radical $\Jac(R)$ contains a nonzero prime ideal. Then, there is a prime ideal $Q\subseteq\Jac(R)$ such that $\Jac(R/Q)$ does not contain nonzero prime ideals.
\end{lemma}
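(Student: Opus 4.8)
The plan is to take $Q$ to be the largest prime ideal of $R$ contained in $\Jac(R)$; intuitively, this cuts the branch of $\Spec(R)$ lying below $\Jac(R)$ at its top, so that nothing of it survives in $R/Q$.

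First I would consider the set $S := \{P \in \Spec(R) \mid P \subseteq \Jac(R)\}$. It is nonempty: by hypothesis it contains a nonzero prime (and it also contains $(0)$). Since $R$ is Pr\"ufer, $\Spec(R)$ is a tree, so the primes contained in any one maximal ideal $M$ form a chain; as $\Jac(R) \subseteq M$, the whole of $S$ sits inside this chain, hence $S$ is totally ordered by inclusion. Consequently $Q := \bigcup_{P \in S} P$ is a prime ideal --- a union of a chain of prime ideals is prime --- and it is again contained in $\Jac(R)$, so $Q \in S$ and in fact $Q = \max S$. (Since $Q$ contains the nonzero prime furnished by the hypothesis, $Q \neq (0)$; this is what makes the conclusion meaningful, although it plays no role in the argument below.)

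Next I would pass to the quotient $R/Q$. Because $Q \subseteq \Jac(R)$, every maximal ideal of $R$ contains $Q$, so $\Max(R/Q) = \{M/Q \mid M \in \Max(R)\}$ and therefore $\Jac(R/Q) = \Jac(R)/Q$. A nonzero prime of $R/Q$ is of the form $P'/Q$ with $Q \subsetneq P' \in \Spec(R)$, and it is contained in $\Jac(R/Q)$ exactly when $P' \subseteq \Jac(R)$, i.e.\ when $P' \in S$. But $Q = \max S$, so no such $P'$ exists; hence $\Jac(R/Q)$ contains no nonzero prime ideal, as required.

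The argument is essentially formal; the only points requiring care are that the union $Q$ of the chain $S$ remains inside $\Jac(R)$ (so that it is still a member of $S$, not merely an upper bound of $S$) and that $Q$ lies in every maximal ideal of $R$ (so that the Jacobson radical is compatible with the quotient map $R \to R/Q$). Both are immediate from $Q \subseteq \Jac(R)$. I note that the tree hypothesis is not strictly needed here: Zorn's lemma applied directly to $S$ --- every chain of primes in $S$ has its union as an upper bound, and that union again lies in $S$ --- yields a maximal element, which is all one uses. Pr\"uferness is convenient only in that it forces $S$ to be a chain, so that $Q$ is the unique largest prime below $\Jac(R)$.
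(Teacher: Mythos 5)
Your proof is correct and follows essentially the same route as the paper: both take $Q$ to be the union of the chain of primes contained in $\Jac(R)$ (a chain because $\Spec(R)$ is a tree, $R$ being Pr\"ufer) and observe that the maximality of $Q$ among such primes forbids any nonzero prime of $R/Q$ from lying in $\Jac(R/Q)$. Your closing remark that a Zorn-type maximal element would suffice without the tree hypothesis is a valid minor strengthening, but the core argument is identical to the paper's.
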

\begin{proof}
Let $\Delta:=\{P\in\Spec(R),P\subseteq\Jac(R)\}$. By hypothesis, $\Delta$ contains nonzero prime ideals. Let $Q:=\bigcup_{P\in\Delta}P$.

Since $R$ is treed, $\Delta$ is a chain; hence, $Q$ is itself a prime ideal, and it is contained in every maximal ideal of $R$. Suppose $\Jac(R/Q)$ contains a nonzero prime ideal $\overline{Q}$. Then, $\overline{Q}=Q'/P$ for some prime ideal $Q'$ of $R$, and $Q'$ is contained in every maximal ideal of $R$. It follows that $Q\subsetneq Q'\subseteq\Jac(R)$, against the construction of $Q$.
\end{proof}

Suppose now that $R$ is a Pr\"ufer domain with quotient field $K$, and suppose there is a nonzero prime ideal $P$ contained in every maximal ideal of $R$. Then, we have a quotient map $\phi:R_P\longrightarrow R_P/PR_P=k$ that, for every star operation $\star$ on $R$, induces a semistar operation $\star_\phi$ on $D:=R/P$ defined by
\begin{equation*}
I^{\star_\phi}:=\phi\left(\phi^{-1}(I)^\star\right),
\end{equation*}
such that $D^{\star_\phi}=D$. Conversely, if $\sharp$ is a star operation on $D$, then we can construct a star operation $\sharp^\phi$ on $R$: indeed, if $I$ is a fractional ideal of $R$, then $I$ is either divisorial (and so we define $I^{\sharp^\phi}:=I$) or there is an $\alpha\in K$ such that $R\subseteq \alpha I\subseteq R_P$ \cite[Proposition 2.2(5)]{hmp_finite}: in the latter case, we define
\begin{equation*}
I^{\sharp^\phi}:=\alpha^{-1}\phi^{-1}\left(\phi(\alpha I)^\sharp\right).
\end{equation*}

\begin{prop}\label{prop:star-semistar}
Let $R,P,D,\phi$ as above. Then, the maps
\begin{equation*}
\begin{aligned}
\insstar(R) & \longrightarrow \inssmstar(R/P)\\
\star & \longmapsto \star_\phi
\end{aligned}
\quad\text{~and~}\quad
\begin{aligned}
\inssmstar(R/P) & \longrightarrow \insstar(R)\\
\star & \longmapsto \star^\phi
\end{aligned}
\end{equation*}
are well-defined order-preserving bijections.
\end{prop}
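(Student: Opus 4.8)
The plan is to run the verification in parallel with \cite[Sections 3--5]{hmp_finite}, the key structural input being the dichotomy of \cite[Proposition 2.2(5)]{hmp_finite} (every fractional ideal of $R$ is either divisorial or $K$-equivalent to one in $\{I:R\subseteq I\subseteq R_P\}$), combined with the general fact $d\le\star\le v$, so that every $\star\in\insstar(R)$ fixes each divisorial fractional ideal and is determined by its values on the interval $[R,R_P]$. Before the main argument I would record the elementary facts about $R_P$ and $\phi$ that make the two displayed formulas meaningful: since $R$ is Pr\"ufer and $P\subseteq\Jac(R)$, the valuation overring $R_P$ has maximal ideal $PR_P=P$, so $k=R_P/P$ is the quotient field of $D=R/P$, the kernel of $\phi$ is exactly $P$, and $\phi^{-1}(D)=R$; moreover $R_P$ is a divisorial (hence $\star$-closed, for every $\star$) fractional ideal of $R$ (indeed $P\subseteq(R:R_P)\subseteq(R:P)=R_P$, whence $R_P^v=R_P$), so every $R$-submodule of $R_P$ is a fractional ideal of $R$ and $I^\star\subseteq R_P$ whenever $I\subseteq R_P$. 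From these one extracts the basic dictionary: $\phi$ restricts to an inclusion-preserving bijection between $\{I\in\insfracid(R):R\subseteq I\subseteq R_P\}$ and the set of $D$-submodules of $k$ containing $D$, with inverse $\overline I\mapsto\phi^{-1}(\overline I)$, and a one-line colon-ideal computation (valid for $R\subseteq I\subseteq R_P$, namely $(D:\phi(I))=\phi((R:I))$) upgrades this to $\phi((\alpha I)^{v_R})=\phi(\alpha I)^{v_D}$, so that the dictionary matches divisorial ideals with divisorial ideals.

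Granting the dictionary, I would first check that $\star\mapsto\star_\phi$ is well-defined, i.e.\ that $\star_\phi$ is a semistar operation on $D$ with $D^{\star_\phi}=D$. For a nonzero $D$-submodule $\overline I$ of $k$ the set $\phi^{-1}(\overline I)$ is a fractional ideal of $R$ contained in $R_P$, so $\phi^{-1}(\overline I)^\star$ is defined and again contained in $R_P$, hence $\phi(\phi^{-1}(\overline I)^\star)$ is defined; the semistar axioms for $\star_\phi$, together with $D^{\star_\phi}=\phi(R^\star)=D$, then transport directly from the corresponding properties of $\star$, using the cancellations $\phi(\phi^{-1}(\overline J))=\overline J$ (surjectivity of $\phi$) and $\phi^{-1}(\phi(J))=J$ for $P\subseteq J\subseteq R_P$. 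The only step needing a remark is $K^*$-homogeneity: a nonzero $\overline x\in k$ is $\phi(u)$ for a \emph{unit} $u$ of $R_P$ (because $P$ is the maximal ideal of $R_P$), and then $\phi^{-1}(\overline x\,\overline I)=u\,\phi^{-1}(\overline I)$, which makes $(\overline x\,\overline I)^{\star_\phi}=\overline x\,\overline I^{\star_\phi}$ immediate.

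The delicate direction is $\sharp\mapsto\sharp^\phi$. Here I would verify, in order: (i) the value prescribed by the second clause is independent of the admissible $\alpha$ --- two such $\alpha,\beta$ differ by a unit of $R_P$, and the discrepancy cancels by homogeneity of $\sharp$ and of $\phi^{-1}$; (ii) that value is a fractional ideal of $R$ (it lies, rescaled, inside $R_P$); (iii) the two clauses agree on their overlap, i.e.\ on fractional ideals that are simultaneously divisorial and $K$-equivalent to an ideal of $[R,R_P]$ --- this is exactly where one invokes that the dictionary matches divisorial ideals, together with $\sharp\le v_D$. One then checks the star-operation axioms for $\sharp^\phi$: extensivity and idempotence follow once one observes that, for non-divisorial $I$, $\sharp^\phi(I)$ is again $K$-equivalent, \emph{via the same $\alpha$}, to an ideal of $[R,R_P]$; $R^{\sharp^\phi}=R$ because $R$ is divisorial; and monotonicity is obtained by first proving $\sharp^\phi\le v_R$ (again from $\phi((\alpha I)^{v_R})=\phi(\alpha I)^{v_D}$ and $\sharp\le v_D$), which reduces every comparison $I\subseteq J$ either to the trivially monotone divisorial case, or to the case where $I,J$ both lie in $[R,R_P]$ after a common scaling, where it transports from the evident monotonicity of $\sharp$ on the $D$-submodules of $k$ containing $D$.

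Finally I would prove bijectivity by showing $(\star_\phi)^\phi=\star$ and $(\sharp^\phi)_\phi=\sharp$: unwinding the definitions, both equalities collapse, on non-divisorial ideals, to the two cancellation identities $\phi\circ\phi^{-1}=\mathrm{id}$ and $\phi^{-1}\circ\phi=\mathrm{id}$ (legitimate because every $\star$- or $\sharp$-closure occurring stays inside $R_P$, respectively contains $P$), while on divisorial ideals they hold trivially since both operations fix those; order-preservation of the two maps was obtained along the way. I expect the genuine difficulty to be purely organizational: keeping the bipartite description ``divisorial vs.\ scalable into $[R,R_P]$'' consistent at every step --- above all in the idempotence and monotonicity of $\sharp^\phi$ --- and pinning down the structural facts about $R_P$ (especially $PR_P=P$ and the divisoriality of $R_P$) first, since once these are in place the remainder is bookkeeping around the map $\phi$.
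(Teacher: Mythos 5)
Your proposal is correct in substance, but it is worth knowing that the paper does not actually argue this proposition: its entire proof is the citation ``well-definedness and bijectivity follow from \cite[Lemmas 2.3 and 2.4]{hmp_finite}; order-preservation is immediate from the definitions.'' What you have written is essentially a reconstruction of the content of those two lemmas, and the skeleton is the right one: the facts $PR_P=P$, $(R:R_P)=P$, $(R:P)=R_P$ (hence $R_P=R_P^v$), the inclusion-preserving correspondence $J\mapsto\phi(J)$ between $\{J: P\subseteq J\subseteq R_P\}$ and the $D$-submodules of $k$, the colon identity $\phi((R:J))=(D:\phi(J))$, and the dichotomy of \cite[Proposition 2.2(5)]{hmp_finite} are exactly the ingredients that make both maps well defined and mutually inverse. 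Two small points deserve more care than your sketch gives them. First, the colon identity (and hence the matching of divisorial ideals) must be stated for all $J$ with $P\subseteq J\subseteq R_P$, not only for $R\subseteq J\subseteq R_P$, since $(R:J)$ lands in $[P,R]$. Second, in the monotonicity of $\sharp^\phi$ the reduction of $I\subseteq J$ (both non-divisorial) to ``a common scaling into $[R,R_P]$'' can fail: with $R\subseteq\alpha I\subseteq R_P$ and $R\subseteq\beta J\subseteq R_P$ one may have $\alpha J\not\subseteq R_P$. In that case, however, $\beta/\alpha\in PR_P=P$, whence $\alpha^{-1}R_P=\beta^{-1}(\beta/\alpha)R_P\subseteq\beta^{-1}P\subseteq\beta^{-1}R$, and therefore $I^{\sharp^\phi}\subseteq\alpha^{-1}R_P\subseteq\beta^{-1}R\subseteq J^{\sharp^\phi}$; so the conclusion survives, but by a separate (easy) argument rather than by the common-scaling reduction. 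With these two repairs your verification is complete and faithfully replaces the citation.
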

\begin{proof}
The fact that they are well-defined and bijections follow from \cite[Lemmas 2.3 and 2.4]{hmp_finite}; the fact that they are order-preserving is immediate from the definitions.
\end{proof}

\subsection{$h$-local Pr\"ufer domains}\label{sect:hloc-prufer}
If $R$ is both a Pr\"ufer domain and a $h$-local domain, then its standard decomposition $\Theta:=\{R_M\mid M\in\Max(R)\}$ is composed by valuation domains, and star operations behave particularly well. We start by re-proving \cite[Theorem 3.1]{twostar} using our general theory.
\begin{prop}\label{prop:hloc-prufer}
Let $R$ be an $h$-local Pr\"ufer domain, and let $\mathcal{M}$ be the set of nondivisorial maximal ideals of $R$. Then, $|\insstar(R)|=2^{|\mathcal{M}|}$.
\end{prop}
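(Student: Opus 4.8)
The plan is to realize $\insstar(R)$ as a product over the maximal ideals and then evaluate each factor separately. Since $R$ is $h$-local, $\Theta:=\{R_M\mid M\in\Max(R)\}$ is a Jaffard family of $R$, and since $R$ is Pr\"ufer every member of $\Theta$ is a valuation domain. So the first step is to invoke Theorem~\ref{teor:star-jaffard}, which yields an order-preserving bijection between $\insstar(R)$ and $\prod_{M\in\Max(R)}\insstar(R_M)$; in particular $|\insstar(R)|=\prod_{M\in\Max(R)}|\insstar(R_M)|$. Everything then reduces to computing $|\insstar(V)|$ for the valuation domains $V=R_M$.

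For that I would use the classical fact that a valuation domain $V$ has $|\insstar(V)|=2$ — the two operations being the identity $d$ and the divisorial closure $v$ — when its maximal ideal is not divisorial, and $|\insstar(V)|=1$ when it is (this is essentially \cite{heinzer_d=v}; for the bound $|\insstar(V)|\leq 2$ one notes that on a valuation domain the family of $\star$-closed fractional ideals is totally ordered, closed under intersection and under scaling by $K$, which, using that the value group acts transitively on itself by translation, forces it to be either all fractional ideals or exactly the divisorial ones).

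Next I would match divisoriality of $M$ inside $R$ with divisoriality of $MR_M$ inside $R_M$, so as to identify the factors equal to $2$ with the ideals in $\mathcal M$. Since $R$ is $h$-local, each $R_M$ is a Jaffard overring of $R$, and applying Lemma~\ref{lemma:Tcolon} twice (all the fractional ideals in play are nonzero) yields $M^vR_M=(R:(R:M))R_M=(R_M:(R_M:MR_M))=(MR_M)^v$. For $N\in\Max(R)$, $N\neq M$, we have $MR_N=R_N$ (as $M\not\subseteq N$), and since $M\subseteq M^v\subseteq R$ also $M^vR_N=R_N=MR_N$. By completeness of $\Theta$ (Proposition~\ref{prop:jaffard:basic}) we may test equality componentwise, so $M^v=M$ if and only if $(MR_M)^v=MR_M$, i.e.\ $M\in\mathcal M$ iff the maximal ideal of the valuation domain $R_M$ is nondivisorial iff $|\insstar(R_M)|=2$. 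Plugging this into the product gives $|\insstar(R)|=\prod_{M\in\mathcal M}2=2^{|\mathcal M|}$ (reading $2^{|\mathcal M|}$ as $|\{0,1\}^{\mathcal M}|$ when $\mathcal M$ is infinite).

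I do not expect a real obstacle: the argument is a direct application of the Jaffard-family correspondence of Section~\ref{sect:jaff-star}. The only genuinely external input is the classical count $|\insstar(V)|\in\{1,2\}$ for valuation domains, and the only computation is the two-line verification with Lemma~\ref{lemma:Tcolon} that divisoriality of $M$ is detected at $R_M$; so the hard part, such as it is, is merely assembling these pieces correctly and being careful that $2^{|\mathcal M|}$ is interpreted as a cardinal when $R$ has infinitely many nondivisorial maximal ideals.
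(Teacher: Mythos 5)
Your proposal is correct and follows essentially the same route as the paper: apply Theorem \ref{teor:star-jaffard} to the Jaffard family $\{R_M\mid M\in\Max(R)\}$, use the classical count $|\insstar(V)|\in\{1,2\}$ for a valuation domain $V$ according to whether its maximal ideal is divisorial, and match divisoriality of $M$ with that of $MR_M$. The only difference is that you spell out the last step via Lemma \ref{lemma:Tcolon} and completeness, which the paper simply asserts.
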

\begin{proof}
By Theorem \ref{teor:star-jaffard}, there is an order-preserving bijection between $\insstar(R)$ and $\prod\{\insstar(R_M)\mid M\in\Max(R)\}$, and a maximal ideal $M$ is divisorial (in $R$) if and only if $MR_M$ is divisorial (in $R_M$). Since $R_M$ is a valuation domain, $|\insstar(R_M)|$ is equal to 1 if $MR_M$ is divisorial, and to 2 if $MR_M$ is not; the claim follows.
\end{proof}

It is noted in the proof of \cite[Theorem 3.10]{olberding_globalizing} that, if $R$ is an $h$-local Pr\"ufer domain and $I,J$ are divisorial ideals of $R$, then $I+J$ is also divisorial. We can extend this result to arbitrary star operations; we shall see a similar result in Proposition \ref{prop:pruf-somma-invt}.
\begin{prop}\label{prop:hloc-pruf-somma}
Let $R$ be an $h$-local Pr\"ufer domain, let $\star\in\insstar(R)$ and let $I,J$ be $\star$-closed ideals. Then, $I+J$ is $\star$-closed.
\end{prop}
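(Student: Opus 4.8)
The plan is to use the Jaffard decomposition of $\insstar(R)$ to reduce the statement to the valuation overrings $R_M$, where any two ideals are comparable. First I would set up notation: since $R$ is an $h$-local Pr\"ufer domain, $\Theta:=\{R_M\mid M\in\Max(R)\}$ is a Jaffard family of $R$ (see the discussion following Remark \ref{oss:Matlis}) and each $R_M$ is a valuation domain. By Theorem \ref{teor:star-jaffard}, $\star$ is extendable to every $R_M$; writing $\star_M:=\star_{R_M}$, the same theorem gives $L^\star=\bigcap_{M\in\Max(R)}(LR_M)^{\star_M}$ for every fractional ideal $L$ of $R$.

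The key step is a localization criterion: a nonzero ideal $L$ of $R$ is $\star$-closed if and only if $LR_M$ is a $\star_M$-ideal for every $M\in\Max(R)$. For the forward implication I would apply Lemma \ref{lemma:intersez-ritorno} to the decomposition $L=L^\star=\bigcap_{M}(LR_M)^{\star_M}$, each term being an $R_M$-submodule of $K$ and $L\neq(0)$; the lemma then yields $LR_N=(LR_N)^{\star_N}$ for every $N$, i.e. $LR_N$ is $\star_N$-closed. For the reverse implication, completeness of $\Theta$ (Proposition \ref{prop:jaffard:basic}\ref{prop:jaffard:basic:complete}) gives $L^\star=\bigcap_M(LR_M)^{\star_M}=\bigcap_M LR_M=L$.

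To finish, I would assume $I,J\neq(0)$ (otherwise $I+J$ equals the other ideal and there is nothing to prove), so that $I+J\neq(0)$ as well. For each $M$ we have $(I+J)R_M=IR_M+JR_M$, and since $R_M$ is a valuation domain its ideals are totally ordered by inclusion; hence $IR_M+JR_M$ equals whichever of $IR_M$ and $JR_M$ is the larger, which is a $\star_M$-ideal by the criterion applied to $I$ and to $J$. Therefore $(I+J)R_M$ is $\star_M$-closed for every $M$, and the criterion applied to $I+J$ shows that $I+J$ is $\star$-closed. The only point requiring a little care is the nonzero-intersection hypothesis of Lemma \ref{lemma:intersez-ritorno}, which is why the zero case is dispatched separately; beyond that the argument is purely formal, and I do not expect a genuine obstacle.
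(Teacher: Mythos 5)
Your proof is correct and follows essentially the same route as the paper: reduce to the localizations $R_M$ via the Jaffard family $\{R_M\mid M\in\Max(R)\}$ and use that ideals of a valuation domain are totally ordered, so $(I+J)R_M$ coincides with one of $IR_M$, $JR_M$. The only difference is that you justify the localization criterion in detail (the paper states it in one line); note that the forward direction is even more immediate than your appeal to Lemma \ref{lemma:intersez-ritorno}, since $L=L^\star$ gives $(LR_M)^{\star_M}=L^\star R_M=LR_M$ directly from the definition of the extension.
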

\begin{proof}
Since $R$ is $h$-local, $I+J$ is $\star$-closed if and only if $(I+J)R_M$ is $\star_M$-closed for every $M\in\Max(R)$. However, since $R_M$ is a valuation domain, either $IR_M\subseteq JR_M$ or $JR_M\subseteq IR_M$; hence, $(I+J)R_M=IR_M+JR_M$ is equal either to $IR_M$ or to $JR_M$, both of which are $\star_M$-closed.
\end{proof}

This result does not hold if we drop the hypothesis that $R$ is $h$-local: in fact, let $R=\insZ+X\insQ[[X]]$ and let $R_p:=\insZ[1/p]+X\insQ[[X]]$ for each prime number $p$. Consider the star operation 
\begin{equation*}
\star:I\mapsto(R:(R:I))\cap(R_2:(R_2:I))\cap(R_3:(R_3:I)).
\end{equation*}
Then, $R_2$ and $R_3$ are $\star$-closed; we claim that $R_2+R_3$ is not. Indeed, if $T$ is equal to $R$, $R_2$ or $R_3$, then $(T:(R_2+R_3))=X\insQ[[X]]$, and thus $(R_2+R_3)^\star=\insQ[[X]]$; however, $R_2+R_3=(\insZ[1/2]+\insZ[1/3])+X\insQ[[X]]$ does not contain rationals with denominator not divisible by 2 or 3 (for example, $1/5\notin R_2+R_3$), and thus $R_2+R_3\neq\insQ[[X]]$.

The following can be seen as a sort of converse to Proposition \ref{prop:hloc-pruf-somma}.
\begin{prop}\label{prop:sommaintersez-prufer}
Let $R$ be a Pr\"ufer domain and suppose that $R$ is either:
\begin{enumerate}[(a)]
\item semilocal; or
\item locally finite and finite-dimensional.
\end{enumerate}
Then, the following are equivalent:
\begin{enumerate}[(i)]
\item $R$ is $h$-local;
\item for every $\star\in\insstar(R)$, $I\in\insfracid(R)\setminus\insstarid^\star(R)$ and $J\in\insfracid(R)$, at least one of $I\cap J$ and $I+J$ is not $\star$-closed;
\item for every $I\in\insfracid(R)\setminus\insstarid^v(R)$ and $J\in\insfracid(R)$, at least one of $I\cap J$ and $I+J$ is not divisorial.
\end{enumerate}
\end{prop}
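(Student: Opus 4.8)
In both cases $\Max(R)$ is locally finite (finite in (a), by hypothesis in (b)) and $\Spec(R)$ is Noetherian (trivially in (a), by Remark \ref{oss:prufjaff} in (b)), so the standard decomposition $\Theta=\{T_\lambda\}$ is a Jaffard family of $R$ (Proposition \ref{prop:Tlambda}). For (i)$\Rightarrow$(ii): if $R$ is $h$-local then $\{R_M\mid M\in\Max(R)\}$ is a Jaffard family, so by Theorem \ref{teor:star-jaffard} every $\star\in\insstar(R)$ is extendable to each $R_M$ and $L^\star=\bigcap_M(LR_M)^{\star_M}$; hence $L$ is $\star$-closed iff $LR_M$ is $\star_M$-closed for all $M$. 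If $I\cap J$ and $I+J$ are $\star$-closed, then for each $M$ the ideals $IR_M,JR_M$ of the valuation domain $R_M$ are comparable, so by flatness one of $(I\cap J)R_M=IR_M\cap JR_M$ and $(I+J)R_M=IR_M+JR_M$ equals $IR_M$, which is therefore $\star_M$-closed; thus $I$ is $\star$-closed. This gives (ii), and (ii)$\Rightarrow$(iii) is the case $\star=v$.

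The substance is (iii)$\Rightarrow$(i), which I would prove contrapositively. Assume $R$ is not $h$-local; as $\Max(R)$ is locally finite, some nonzero prime lies in two maximal ideals, i.e.\ some $T:=T_\lambda\in\Theta$ has at least two maximal ideals. Writing $S:=\ortog{\Theta}(T)$, the map $X'\mapsto X'\cap S$ sends a fractional ideal of $T$ to one of $R$ with $(X'\cap S)T=X'$ and $(X'\cap S)U=U$ for $U\in\Theta\setminus\{T\}$ (using $TS=K$ and flatness), commutes with finite meets and joins, and — since by Theorem \ref{teor:star-jaffard} a fractional ideal $I$ of $R$ is divisorial iff $IU$ is divisorial in every $U\in\Theta$ — preserves and reflects divisoriality. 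So it suffices to produce, in a domain $R$ that equals some $T_\lambda$ (hence has at least two maximal ideals, all pairwise dependent), a non-divisorial fractional ideal $I$ and a fractional ideal $J$ with $I\cap J$ and $I+J$ divisorial.

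For such $R$, Lemma \ref{lemma:dimfin} in case (b), and the fact that finitely many pairwise dependent maximal ideals of a treed domain share a common nonzero prime in case (a), yield a nonzero prime $P\subseteq\Jac(R)$. One checks $PR_P=P$ and $pR_P\subseteq R$ for $p\in P\setminus\{0\}$, so $R_P$ — hence every $R_M$ and every $\bigcap_{N\neq M}R_N$, being contained in $R_P$ — is a fractional ideal of $R$. The candidate is $I:=R_M$ and $J:=\bigcap_{N\in\Max(R),\,N\neq M}R_N$ for a fixed maximal ideal $M$: then $I\cap J=\bigcap_{N\in\Max(R)}R_N=R$ is divisorial, so everything hinges on showing $R_M$ is \emph{not} divisorial and $I+J$ \emph{is}. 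I would settle both by descending to $D:=R/P$ via Proposition \ref{prop:star-semistar}: the order isomorphism $\insstar(R)\cong\inssmstar(D)$ sends $v$ to the semistar $v$-operation of $D$, and $\phi$ identifies the fractional ideals of $R$ between $R$ and $R_P$ with the $D$-submodules of $\mathrm{Frac}(D)$ between $D$ and $\mathrm{Frac}(D)$ — compatibly with meets, joins and $v$ — carrying $R_M$ to $D_{\mathfrak m}$ and $J$ to $\bigcap_{\mathfrak n\neq\mathfrak m}D_{\mathfrak n}$, where $\mathfrak m:=M/P$.

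Thus the problem reduces to: $D_{\mathfrak m}$ is not divisorial and $D_{\mathfrak m}+\bigcap_{\mathfrak n\neq\mathfrak m}D_{\mathfrak n}$ is divisorial in $D$. The first amounts to $(D:D_{\mathfrak m})=\bigcap_{s\in D\setminus\mathfrak m}sD=0$, which I would get from the approximation theorem: when some maximal $\mathfrak n\neq\mathfrak m$ is independent of $\mathfrak m$ (automatic if $D$ is $h$-local) one finds $s\in D\setminus\mathfrak m$ with $v_{\mathfrak n}(s)$ arbitrarily large, so the intersection vanishes after localizing at $\mathfrak n$. For the second, if $D$ is $h$-local then $D_{\mathfrak m}+\bigcap_{\mathfrak n\neq\mathfrak m}D_{\mathfrak n}$ localizes to $\mathrm{Frac}(D)$ at every maximal ideal of $D$ (by Proposition \ref{prop:integintersect} and $D_{\mathfrak n}D_{\mathfrak m}=\mathrm{Frac}(D)$), hence equals $\mathrm{Frac}(D)$, which is divisorial; and if $D$ is not $h$-local — possible only in case (b), where, taking $P$ of height one by Lemma \ref{lemma:dimfin}, $\dim D<\dim R$ — induction on $\dim R$ produces a counterexample already in $D$, which after rescaling so that both ideals contain $D$ lifts through $\phi^{-1}$ to $R$; in case (a) one instead takes $P$ to be the union of the primes contained in $\Jac(R)$, so that $D$ is $h$-local by Lemma \ref{lemma:prufer-jac} and no induction is needed. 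The main obstacle throughout is exactly this last step: divisoriality of an overring of $R$ cannot be checked locally when $R$ is not $h$-local, and passing to $D=R/P$, where the modules in play become the well-understood localizations $D_{\mathfrak m}$, is what makes it manageable.
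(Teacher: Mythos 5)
Your directions (i)$\Rightarrow$(ii)$\Rightarrow$(iii) match the paper's argument, and your reduction of (iii)$\Rightarrow$(i) to a single non-local member $T$ of the standard decomposition (via $X'\mapsto X'\cap\ortog{\Theta}(T)$) is a correct and welcome justification of a step the paper leaves implicit. The problem is in your choice of witnesses inside $T$. You take $I=T_M$ and $J=\bigcap_{N\neq M}T_N$ and then need $D:=T/P$ to be $h$-local for the computations $(D:D_{\mathfrak m})=0$ and $D_{\mathfrak m}+\bigcap_{\mathfrak n\neq\mathfrak m}D_{\mathfrak n}=\mathrm{Frac}(D)$ to go through. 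Your patch for case (a) --- ``take $P$ to be the union of the primes contained in $\Jac(T)$, so that $D$ is $h$-local by Lemma \ref{lemma:prufer-jac}'' --- is false: that lemma only guarantees that $\Jac(D)$ contains no nonzero prime, i.e.\ that no nonzero prime lies in \emph{all} maximal ideals of $D$; it does not prevent a nonzero prime from lying in two of them. For instance, if $\Spec(T)$ is the tree $(0)\subset P\subset\{Q\subset\{M_1,M_2\},\,M_3\}$ (realizable for a semilocal Pr\"ufer domain), the largest prime in $\Jac(T)$ is $P$, yet $\overline{Q}$ is a nonzero prime of $D=T/P$ contained in $\overline{M_1}\cap\overline{M_2}$, so $D$ is not $h$-local. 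Since $T$ is semilocal but possibly of infinite dimension and $|\Max(D)|=|\Max(T)|$, neither your dimension induction nor an induction on the number of maximal ideals closes this case as written.

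The paper avoids this entirely by choosing the witnesses differently: it takes $Z$ to be a member of the standard decomposition $\Lambda$ of $D$ (a nontrivial Jaffard family by the choice of $P$, with no $h$-locality needed) and $Z'=\ortog{\Lambda}(Z)$. Then $Z\cap Z'=D$, $Z+Z'=\mathrm{Frac}(D)$ because every maximal ideal of $D$ survives in exactly one of $Z,Z'$, and $ZZ'=\mathrm{Frac}(D)$ forces both $Z$ and $Z'$ to be non-fractional, hence non-divisorial after pulling back through $\phi^{-1}$. Your argument becomes correct if you simply replace $D_{\mathfrak m}$ and $\bigcap_{\mathfrak n\neq\mathfrak m}D_{\mathfrak n}$ by $Z$ and $Z'$; as it stands, the case ``$D$ semilocal and not $h$-local'' is a genuine gap.
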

\begin{proof}
(i $\Longrightarrow$ ii) For every $M\in\Max(R)$, $(I+J)R_M=IR_M+JR_M=\max\{IR_M,JR_M\}$, while $(I\cap J)R_M=IR_M\cap JR_M=\min\{IR_M,JR_M\}$. Since $I$ is not $\star$-closed, and $\{R_M\mid M\in\Max(R)\}$ is a Jaffard family of $R$, there is a maximal ideal $N$ such that $IR_N$ is not $\star_N$-closed; however, at least one of $(I+J)R_N$ and $(I\cap J)R_N$ is equal to $IR_N$, and thus at least one is not $\star_N$-closed. Therefore, at least one between $I+J$ and $I\cap J$ is not $\star$-closed.

(ii $\Longrightarrow$ iii) is obvious.

(iii $\Longrightarrow$ i) Consider the standard decomposition $\Theta$ of $R$; then, (iii) holds for every member of $\Theta$ but, if $R$ is not $h$-local, there must be a $T\in\Theta$ that is not local. By Lemma \ref{lemma:prufer-jac}, there is a prime ideal $P$ of $T$ such that $\Jac(T/P)$ does not contain nonzero primes. Let $\Lambda$ be the standard decomposition of $D:=T/P$, let $Z\in\Lambda$ and define $Z':=\bigcap_{W\in\Lambda\setminus\{Z\}}W=\ortog{\Lambda}(Z)$. We have $Z\cap Z'=D$ and, for every maximal ideal $M$ of $D$, either $ZD_M=K$ or $Z'D_M=K$. Therefore, $Z+Z'=\bigcap_{M\in\Max(T)}(Z+Z')D_M=K$. 

By Proposition \ref{prop:star-semistar}, the $v$-operation on $T$ correspond to a (semi)star operation on $D$ such that $A^\star=K$ if $A$ is not a fractional ideal of $D$; therefore, both $\phi^{-1}(Z)$ and $\phi^{-1}(Z')$ are not divisorial, but both $\phi^{-1}(Z\cap Z')=T$ and $\phi^{-1}(Z+Z')=T_P$ are (where $\phi:T\longrightarrow D$ is the quotient map). This is a contradiction, and $R$ must be $h$-local.
\end{proof}

\subsection{Stability}
Recall that a star operation $\star$ is \emph{stable} if it distributes over finite intersections, i.e., if $(I\cap J)^\star=I^\star\cap J^\star$. In this section, we study stable operations on Pr\"ufer domains; we start with an analogue of Proposition \ref{prop:star-semistar}.
\begin{prop}\label{prop:star-semistar_stab}
Preserve the notation and the hypotheses of Proposition \ref{prop:star-semistar}. There is a bijection between $\insstarstab(R)$ and $\insstarstab(R/P)$.
\end{prop}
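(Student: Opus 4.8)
The plan is to refine the bijections $\star\mapsto\star_\phi$ and $\sharp\mapsto\sharp^\phi$ of Proposition \ref{prop:star-semistar} by showing that one of them carries stable operations to stable operations; since the two maps are mutually inverse, this automatically gives that the inverse does as well, hence a bijection $\insstarstab(R)\to\insstarstab(R/P)$. (Strictly speaking one should check that the notion of "stable" makes sense for a (semi)star operation on $D$ — it does, since stability, $(I\cap J)^\sharp=I^\sharp\cap J^\sharp$ for all $I,J$, is phrased purely in terms of the lattice of submodules and does not involve $D^\sharp=D$ — so $\insstarstab(D)$ is well-defined.) So the real content is a single implication carried out in each direction, and by symmetry of the construction the two arguments will be near-mirror images.

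First I would unwind the definition $I^{\star_\phi}=\phi(\phi^{-1}(I)^\star)$, where $\phi\colon R_P\to R_P/PR_P=k$, and note that $\phi^{-1}$ is a lattice embedding of the fractional ideals of $D=R/P$ (equivalently the $k$-submodules between $D$ and its quotient field $k$, pulled back along $\phi$) into the fractional ideals of $R_P$, and in fact into those $R$-fractional ideals containing $P$: it commutes with finite intersections, $\phi^{-1}(I_1\cap I_2)=\phi^{-1}(I_1)\cap\phi^{-1}(I_2)$, and $\phi$ likewise sends intersections of ideals containing $PR_P$ to intersections. Using this, if $\star$ is stable on $R$ then for fractional ideals $I_1,I_2$ of $D$,
\begin{equation*}
(I_1\cap I_2)^{\star_\phi}=\phi\!\left(\phi^{-1}(I_1\cap I_2)^\star\right)=\phi\!\left((\phi^{-1}(I_1)\cap\phi^{-1}(I_2))^\star\right)=\phi\!\left(\phi^{-1}(I_1)^\star\cap\phi^{-1}(I_2)^\star\right),
\end{equation*}
and since $\phi^{-1}(I_i)^\star\supseteq\phi^{-1}(I_i)\supseteq PR_P$, the two $\star$-closures still contain $PR_P$, so $\phi$ distributes over their intersection, giving $I_1^{\star_\phi}\cap I_2^{\star_\phi}$; hence $\star_\phi$ is stable. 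For the other direction, given a stable $\sharp$ on $D$, I would recall from \cite[Proposition 2.2(5)]{hmp_finite} that every non-divisorial fractional ideal $I$ of $R$ can be rescaled by some $\alpha\in K$ so that $R\subseteq\alpha I\subseteq R_P$, and that divisorial ideals are left fixed by $\sharp^\phi$; so to test $(I\cap J)^{\sharp^\phi}=I^{\sharp^\phi}\cap J^{\sharp^\phi}$ it suffices to treat the cases according to which of $I,J,I\cap J$ are divisorial, and in the non-divisorial case scale both simultaneously (a common $\alpha$ works since $R\subseteq\alpha I\subseteq R_P$ and $R\subseteq\alpha J\subseteq R_P$ can be arranged, the point being $P\subseteq\Jac(R)$) and run the analogous computation with $\phi$ and $\phi^{-1}$ interchanged, using that $\sharp$ distributes over intersections of $D$-submodules of $k$.

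The step I expect to be the main obstacle is the bookkeeping in the reverse direction: the definition of $\sharp^\phi$ splits into the divisorial and non-divisorial cases, and one must verify that $I\cap J$ and the $\sharp^\phi$-closures land in the right case and that a single scaling factor $\alpha$ can be used for $I$, $J$, and $I\cap J$ at once so that the identity $\phi^{-1}(\phi(\alpha I)^\sharp\cap\phi(\alpha J)^\sharp)=\phi^{-1}(\phi(\alpha I)^\sharp)\cap\phi^{-1}(\phi(\alpha J)^\sharp)$ really does translate back. One subtlety to handle carefully: if exactly one of $I,J$ is divisorial, say $J=J^v$ while $I$ is not, then $I^{\sharp^\phi}\cap J^{\sharp^\phi}=I^{\sharp^\phi}\cap J$, and one needs $I\cap J$ to be non-divisorial or to reduce to it — here one uses that a divisorial ideal intersected with anything need not be divisorial, so the "non-divisorial" branch of the definition applies to $I\cap J$, and the computation goes through once we observe $\phi^{-1}\phi$ acts as the identity on ideals containing $P$. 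Aside from this case analysis, every step is a routine consequence of flatness of $R_P$ over $R$, the lattice-morphism properties of $\phi,\phi^{-1}$, and the fact (already recorded in the excerpt) that $(I^\star:J^\star)=(I^\star:J)$ for semistar operations, which guarantees the closures stay above $P$.
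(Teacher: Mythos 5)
There are two genuine gaps. The more serious one is in the reverse direction: your claim that for two non-divisorial ideals $I,J$ a \emph{common} $\alpha$ with $R\subseteq\alpha I\subseteq R_P$ and $R\subseteq\alpha J\subseteq R_P$ ``can be arranged'' is false. If $P\subseteq\alpha I\subseteq R_P$ and $J=xI$ with $\valut(x)>0$ (where $\valut$ is the valuation associated to $R_P$), then $J$ is again non-divisorial but $\alpha J\subseteq P$, and no single rescaling normalizes both at once, since the normalizing factors of $I$ and of $J$ differ by a non-unit of $R_P$. The paper's proof instead fixes $\alpha$ for $I$ alone and then uses that, $R$ being Pr\"ufer with $P$ contained in every maximal ideal, \emph{every} fractional ideal is comparable with both $P$ and $R_P$; this yields a trichotomy ($\alpha J\subseteq P$, or $R_P\subseteq\alpha J$, or $P\subseteq\alpha J\subseteq R_P$) in which the first two cases force $I\cap J$ to equal $J$ or $I$, so that only the third case requires the $\phi$-computation. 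Your argument has no mechanism for the situation where the two scalings are incompatible, and that is precisely where the content of this direction lies.

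The second gap: Proposition \ref{prop:star-semistar} is a bijection between $\insstar(R)$ and $\inssmstar(R/P)$, the \emph{(semi)star} operations on $R/P$, which act on all submodules of the quotient field of $R/P$ and not only on fractional ideals. Your two implications, even once repaired, therefore produce a bijection between $\insstarstab(R)$ and $\inssmstar_{\mathrm{st}}(R/P)$, not with $\insstarstab(R/P)$ as the statement requires; note that the map $\sharp\mapsto\sharp^\phi$ genuinely needs $\sharp$ to be defined on non-fractional submodules (e.g.\ $\phi(\alpha I)$ can be an overring of $R/P$), so one cannot simply restrict everything to fractional ideals from the start. The paper closes this gap by quoting the fact that, for any domain $D$, restriction gives a bijection from stable (semi)star operations onto stable star operations. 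Finally, the opening remark that proving one implication ``automatically gives'' the other because the maps are mutually inverse is not correct --- a bijection can carry a subset into, without being onto, another subset; since you do go on to treat both directions, this is only an expository slip, but it should be removed.
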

\begin{proof}
We first show that the bijections of Proposition \ref{prop:star-semistar}  become bijections on the subsets of stable operations; let thus $\star$ be a semistar operation in the first set and $\sharp$ be the corresponding operation on $\inssmstar(R/P)$. Let $\phi:R\longrightarrow R/P$ be the quotient map.

Suppose that $\star$ is stable and let $I,J\in\inssubmod(R/P)$. Then, since $\phi$ is a bijection between the ideal comprised between $P$ and $R_P$ and $\inssubmod(R/P)$,
\begin{equation*}
\begin{array}{rcl}
(I\cap J)^\sharp & = & \phi\left[\phi^{-1}(I\cap J)^\star\right]= \phi\left[\left(\phi^{-1}(I)\cap\phi^{-1}(J)\right)^\star\right]=\\
& = & \phi\left[\phi^{-1}(I)^\star\cap\phi^{-1}(J)^\star\right]= \phi\left(\phi^{-1}(I)^\star\right)\cap\phi\left(\phi^{-1}(J)^\star\right)=\\
& = & I^\sharp\cap J^\sharp.
\end{array}
\end{equation*}
Therefore, $\sharp$ is stable.

Conversely, suppose $\sharp$ is stable and let $I,J\in\insfracid(R)$. If $I$ and $J$ are divisorial, so is $I\cap J$; hence, $(I\cap J)^\star=I\cap J=I^\star\cap J^\star$. Suppose (without loss of generality) that $I\neq I^v$. Then, there is an $\alpha$ such that $P\subseteq \alpha I\subseteq R_P$. Moreover, since $R$ is Pr\"ufer and $P$ is contained in every maximal ideal of $R$, every fractional ideal must be comparable with both $P$ and $R_P$: more precisely, if $\valut$ is the valuation relative to $R_P$, and $L$ is an ideal, then either $\inf\valut(L)=0$ (so that $P\subseteq L\subseteq R_P$), $\inf\valut(L)$ exist and has a sign (if positive, $L\subseteq P$, if negative, $R_P\subseteq L$) or $\inf\valut(L)$ has no infimum (so that if $\valut(L)$ contains negative values then $R_P\subseteq L$, while $L\subseteq P$ in the other case). Therefore, we can distinguish three cases:
\begin{itemize}
\item $\alpha J\subseteq P$: then, $\alpha J\subseteq \alpha I$, and thus $(I\cap J)^\star=J^\star=I^\star\cap J^\star$;
\item $R_P\subseteq\alpha J$: then, $\alpha I\subseteq\alpha J$, and thus $(I\cap J)^\star=I^\star=I^\star\cap J^\star$;

\item $P\subseteq\alpha J\subseteq R_P$. Let $I_0:=\alpha I$ and $J_0:=\alpha J$. Then,
\begin{equation*}
\begin{array}{rcl}
(I_0\cap J_0)^\star & = &\phi^{-1}\left(\phi(I_0\cap J_0)^\sharp\right)= \phi^{-1}\left(\phi(I_0)^\sharp\cap\phi(J_0)^\sharp\right)=\\
& = & \phi^{-1}(\phi(I_0)^\sharp)\cap\phi^{-1}(\phi(J_0)^\sharp)=I_0^\star\cap J_0^\star.
\end{array}
\end{equation*}
Hence,
\begin{equation*}
\begin{array}{rcl}
(I\cap J)^\star & = & \alpha^{-1}(\alpha(I\cap J)^\star)=\alpha^{-1}(I_0\cap J_0)^\star=\\
& = & \alpha^{-1}(I_0^\star\cap J_0^\star)=\alpha^{-1}I_0^\star\cap\alpha^{-1}J_0^\star=I^\star\cap J^\star.
\end{array}
\end{equation*}
\end{itemize}
In all cases, $\star$ distributes over finite intersection, and thus $\star$ is stable.

Therefore, there is an order-preserving bijection between $\insstarstab(R)$ and $\inssmstar_{\mathrm{st}}(R/P)$. However, for every domain $D$, the restriction map $\inssmstar_{\mathrm{st}}(D)\longrightarrow\insstarstab(D)$ is a bijection (see \cite[Discussion after Proposition 3.10]{surveygraz} or \cite[Proposition 3.4]{spettrali-eab}), and thus $\insstarstab(R)$ corresponds bijectively with $\insstarstab(R/P)$. The claim follows.
\end{proof}

We say that a star (or semistar) operation $\star$ \emph{distributes over arbitrary intersections} if, whenever $\{I_\alpha\}_{\alpha\in A}$ is a family of ideals with nonzero intersection, we have $\left(\bigcap_{\alpha\in A}I_\alpha\right)^\star=\bigcap_{\alpha\in A}I_\alpha^\star$.

\begin{lemma}\label{lemma:intersez-valuation}
If $V$ is a valuation domain, the $v$-operation distributes over arbitrary intersections.
\end{lemma}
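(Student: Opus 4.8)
The plan is to exploit the linearity of the ideal structure of a valuation domain, together with the classical fact that on a valuation domain an ideal fails to be divisorial only in one very specific situation. First recall that if $V$ is a valuation domain with value group $\Gamma$ and valuation $\valut$, then the nonzero $V$-submodules of the quotient field are totally ordered by inclusion, and they correspond to the ``upper sets'' of $\Gamma\cup\{\infty\}$; moreover the $v$-closure of a module $I$ is $I$ itself unless $I=\{x\mid \valut(x)>\gamma\}$ is the ``open'' ray determined by some $\gamma\in\Gamma$ that is \emph{attained} as an infimum, in which case $I^v$ is the corresponding ``closed'' ray $\{x\mid\valut(x)\geq\gamma\}$. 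In particular $I\subsetneq I^v$ happens for at most a principal-type jump, and $I^v$ is always the smallest divisorial module containing $I$.

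Let $\{I_\alpha\}_{\alpha\in A}$ be a family with $J:=\bigcap_\alpha I_\alpha\neq(0)$. The inclusion $J^v\subseteq\bigcap_\alpha I_\alpha^v$ is immediate from monotonicity of $v$, so the content is the reverse inclusion; equivalently, it suffices to show $\bigcap_\alpha I_\alpha^v$ is divisorial, since it contains $J$ and hence contains $J^v$, the smallest divisorial module over $J$. Now each $I_\alpha^v$ is divisorial, so I am reduced to showing that an arbitrary intersection of divisorial $V$-submodules (with nonzero intersection) is divisorial. Translating to $\Gamma$: a divisorial module is either a ``closed'' ray $[\gamma,\infty]$ with $\gamma\in\Gamma\cup\{\infty\}$, or an ``open'' ray $(\gamma,\infty]$ where $\gamma$ is \emph{not} attained in $\Gamma$ (so that the open ray is already closed in the order-theoretic sense, being equal to $\bigcap_{\delta<\gamma}[\delta,\infty]$ with the relevant cut being a gap). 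The intersection $\bigcap_\alpha I_\alpha^v$ corresponds to the ray determined by $\gamma_0:=\sup_\alpha\gamma_\alpha$ (in $\Gamma\cup\{\pm\infty\}$, with $\gamma_0\neq-\infty$ because the intersection is nonzero), and a short case analysis — whether this supremum is attained by some $\gamma_\alpha$ with the $\alpha$-th ray closed at $\gamma_\alpha$, attained only with open rays, or not attained at all — shows in every case that the resulting ray is again of divisorial type. Hence $\bigcap_\alpha I_\alpha^v$ is divisorial, which gives $\bigcap_\alpha I_\alpha^v\supseteq J^v$ and completes the proof.

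The only point requiring care — and the one I would flag as the main obstacle — is the bookkeeping in the last case analysis, namely pinning down exactly when a ray $(\gamma_0,\infty]$ or $[\gamma_0,\infty]$ arising from the supremum is divisorial. The subtlety is that ``open ray'' is divisorial precisely when its lower endpoint is a gap of $\Gamma$ (not attained), and one has to verify that taking suprema of endpoints cannot manufacture an open ray whose endpoint \emph{is} attained: if $\gamma_0=\gamma_\beta$ is attained for some index $\beta$ whose ray $I_\beta^v$ is the \emph{open} ray at $\gamma_\beta$, then $\gamma_\beta$ must in fact be a gap (that is what divisoriality of $I_\beta^v$ says), contradicting attainment; and if $\gamma_0$ is attained only through closed rays then the intersection is the closed ray $[\gamma_0,\infty]$, which is divisorial. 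Once this dichotomy is laid out cleanly the argument closes with no further work. (An alternative, purely module-theoretic route avoiding value groups is to note that $\bigcap_\alpha I_\alpha^v=\bigcap_\alpha(V:(V:I_\alpha))=(V:\sum_\alpha(V:I_\alpha))$, using that $(V:\cdot)$ turns sums into intersections, and this exhibits the intersection as $(V:M)$ for the module $M:=\sum_\alpha(V:I_\alpha)$, hence as a divisorial module; I would likely present this slicker version instead, as it sidesteps the case analysis entirely.)
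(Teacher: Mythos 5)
Your reduction step is where the argument breaks down, and it breaks down in a way that undoes the whole proof. After correctly identifying the hard inclusion as $\bigcap_\alpha I_\alpha^v\subseteq\bigl(\bigcap_\alpha I_\alpha\bigr)^v$, you claim it is ``equivalent'' to showing that $\bigcap_\alpha I_\alpha^v$ is divisorial. It is not: knowing that $\bigcap_\alpha I_\alpha^v$ is a divisorial module containing $J:=\bigcap_\alpha I_\alpha$ only tells you that it contains $J^v$, the smallest divisorial module containing $J$ --- and that is exactly the inclusion you had already dismissed as immediate from monotonicity. Both of your arguments (the ray-by-ray case analysis, and the slicker identity $\bigcap_\alpha(V:(V:I_\alpha))=(V:\sum_\alpha(V:I_\alpha))$, which is correct and does exhibit the intersection as a divisorial module) therefore terminate in a statement that re-proves the trivial inclusion and says nothing about the reverse one. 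What must be ruled out is that $\bigcap_\alpha I_\alpha^v$ is a divisorial module \emph{strictly larger} than $J^v$, and divisoriality alone cannot do that: any principal fractional ideal containing $J$ is divisorial and contains $J$, yet need not equal $J^v$.

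The missing content is the comparison of the two lower endpoints: one must show that the ray cut out by $\sup_\alpha\inf\valut(I_\alpha)$ is the same as the $v$-closure of the ray cut out by $\inf\valut(J)$, which is where the actual interaction between the family and its intersection enters. The paper does this element-wise: if $x\in\bigcap_\alpha I_\alpha^v$ but $x\notin J$, then $x\in I_{\overline{\alpha}}^v\setminus I_{\overline{\alpha}}$ for some $\overline{\alpha}$, which forces $\valut(x)=\inf\valut(I_{\overline{\alpha}})$; assuming the family has no minimum (the case with a minimum being immediate), one picks strictly smaller members $I_\beta\supsetneq I_\gamma$ of the family inside $I_{\overline{\alpha}}$ and checks $x\notin I_\gamma^v$, a contradiction. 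Your value-group framework could certainly be completed along the same lines --- for instance by verifying that every principal module containing $J$ also contains $\bigcap_\alpha I_\alpha^v$ --- but as written the key inclusion is never proved.
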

\begin{proof}
Let $\mathcal{A}:=\{I_\alpha\}_{\alpha\in A}$ be a family of ideals of $V$ with nonzero intersection. If $\mathcal{A}$ has a minimum $I_{\overline{\alpha}}$, then $I_{\overline{\alpha}}^v\subseteq I_\beta^v$ for every $\beta\in A$, and thus $\left(\bigcap_{\alpha\in A}I_\alpha\right)^v=I_{\overline{\alpha}}^v=\bigcap_{\alpha\in A}I_\alpha^v$.

Suppose $\mathcal{A}$ does not have a minimum: since $\left(\bigcap_{\alpha\in A}I_\alpha\right)^v\subseteq I_\alpha^v$ for every $\alpha\in A$, we have $\left(\bigcap_{\alpha\in A}I_\alpha\right)^v\subseteq\bigcap_{\alpha\in A}I_\alpha^v$.

Let $x\in\bigcap_{\alpha\in A}I_\alpha^v$: if $x\in\bigcap_{\alpha\in A}I_\alpha$ then $x\in\left(\bigcap_{\alpha\in A}I_\alpha\right)^v$. On the other hand, if $x\notin\bigcap_{\alpha\in A}I_\alpha$, then there is an $\overline{\alpha}$ such that $x\in I_{\overline{\alpha}}^v\setminus I_{\overline{\alpha}}$, i.e., $\valut(x)=\inf \valut(I_{\overline{\alpha}})$ (where $\valut$ is the valuation associated to $V$ and $\valut(J):=\{\valut(j)\mid j\in J\}$). However, since $\mathcal{A}$ has no minimum, there are $\beta,\gamma\in A$ such that $I_\alpha\supsetneq I_\beta\supsetneq I_\gamma$; in particular, $\valut(x)>\inf \valut(I_\gamma)$, and thus $x\notin I_\gamma^v$, which is absurd. Therefore, $x\in\bigcap_{\alpha\in A}I_\alpha$.
\end{proof}

The following proposition may also be proved, in a slightly more generalized setting, using a different, more direct, approach; see \cite{stable_prufer}.
\begin{prop}\label{prop:prufer-stab}
Let $R$ be a Pr\"ufer domain and suppose that $R$ is either:
\begin{enumerate}[(a)]
\item semilocal; or
\item locally finite and finite-dimensional.
\end{enumerate}
Then, every stable star operation $\star$ on $R$ is in the form
\begin{equation}\label{eq:stablepruf}
I\mapsto\bigcap_{P\in\Max(R)}(IR_P)^{\star^{(P)}},
\end{equation}
where each $\star^{(P)}\in\insstar(R_P)$. In particular, $\insstarstab(R)$ is order-isomorphic to $\prod\{\insstar(R_P)\mid P\in\Max(R)\}$.
\end{prop}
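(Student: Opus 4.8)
The plan is to combine the two reductions already available — the Jaffard-family decomposition of $\insstarstab$ from Theorems \ref{teor:star-jaffard} and \ref{teor:jaffard-corresp}, and the ``cut the branch'' correspondence of Proposition \ref{prop:star-semistar_stab} — into an induction, and then to track the formula \eqref{eq:stablepruf} through both steps. Under either hypothesis the standard decomposition $\Theta=\{T_\lambda\mid\lambda\in\Lambda\}$ is a Jaffard family of $R$ (Proposition \ref{prop:Tlambda}), $\Max(R)=\bigsqcup_\lambda\Delta_\lambda$, and (from the description of $\Spec(T_\lambda)$ in the proof of that proposition and Remark \ref{oss:def}) $\Max(T_\lambda)=\{PT_\lambda\mid P\in\Delta_\lambda\}$ with $(T_\lambda)_{PT_\lambda}=R_P$. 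By Theorems \ref{teor:star-jaffard} and \ref{teor:jaffard-corresp} the assignment $\star\mapsto(\star_{T_\lambda})_\lambda$ is an order-isomorphism $\insstarstab(R)\to\prod_\lambda\insstarstab(T_\lambda)$ with inverse $(\sharp_\lambda)_\lambda\mapsto\bigl(I\mapsto\bigcap_\lambda(IT_\lambda)^{\sharp_\lambda}\bigr)$; so if the statement holds for each $T_\lambda$, a stable $\star$ on $R$ satisfies $I^\star=\bigcap_\lambda(IT_\lambda)^{\star_{T_\lambda}}=\bigcap_\lambda\bigcap_{P\in\Delta_\lambda}(IR_P)^{(\star_{T_\lambda})^{(P)}}=\bigcap_{P\in\Max(R)}(IR_P)^{\star^{(P)}}$, and composing order-isomorphisms gives $\insstarstab(R)\cong\prod_{P\in\Max(R)}\insstar(R_P)$ (the reverse map is well defined because every star operation on the valuation domain $R_P$ is stable, so the intersection defining its image is a stable operation). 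Hence it suffices to prove the statement for a $T_\lambda$, i.e. to assume that all maximal ideals of $R$ are pairwise dependent.

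We argue by induction, on $\dim R$ in case (b) and on $|\Max(R)|$ in case (a) (both finite; the reductions below preserve the hypothesis and produce the indicated strict decrease). If $R$ is local it is a valuation domain and the claim is trivial with $\star^{(M)}=\star$. Otherwise $\Jac(R)$ contains a nonzero prime — the height-one prime of Lemma \ref{lemma:dimfin} in case (b), and, in case (a), a common nonzero prime of the finitely many pairwise dependent maximal ideals (a short argument in the tree $\Spec(R)$). By Lemma \ref{lemma:prufer-jac} pick a nonzero prime $Q\subseteq\Jac(R)$ with $\Jac(R/Q)$ containing no nonzero prime, set $D:=R/Q$, and apply Proposition \ref{prop:star-semistar_stab} to obtain $\insstarstab(R)\cong\insstarstab(D)$, with $\Max(D)\cong\Max(R)$ and $D_{\overline M}=R_M/QR_M$. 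In case (b), $\dim D<\dim R$, so the inductive hypothesis applies to $D$. In case (a), $D$ is again not local and $\Jac(D)$ has no nonzero prime, hence $D$ is not all-pairwise-dependent (else $\Jac(D)$ would contain a common nonzero prime), so applying the first paragraph to $D$ writes $\insstarstab(D)$ as a product over Jaffard members of $D$, each having strictly fewer maximal ideals than $R$, to which the inductive hypothesis applies. Either way the statement holds for $D$, hence for $R$.

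It remains to carry the explicit form across $\insstarstab(R)\cong\insstarstab(D)$ and to match $\insstar(D_{\overline M})$ with $\insstar(R_M)$. For the first: if $\sharp$ is the operation on $D$ corresponding to $\star$ and $\overline I^{\sharp}=\bigcap_{\overline M}(\overline I D_{\overline M})^{\sharp^{(\overline M)}}$, then applying $\phi^{-1}$ — which commutes with the relevant intersections and, after localizing at each $M$, with the one-level-down correspondence $\insstar(R_M)\cong\inssmstar(D_{\overline M})$ of Proposition \ref{prop:star-semistar} — yields $I^\star=\bigcap_M(IR_M)^{\star^{(M)}}$ for the $\star^{(M)}\in\insstar(R_M)$ matching $\sharp^{(\overline M)}$, the divisorial ideals and the shifted case $R\subseteq\alpha I\subseteq R_Q$ being treated identically. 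For the second: $QR_M$ is a nonzero, non-maximal prime of the valuation domain $R_M$ (nonzero since $Q\neq(0)$, non-maximal since $Q$ is not a maximal ideal of $R$ when $R$ is not local), so Proposition \ref{prop:star-semistar_stab} applied to $R_M$, together with the stability of every star operation on a valuation domain, gives $\insstar(R_M)=\insstarstab(R_M)\cong\insstarstab(D_{\overline M})=\insstar(D_{\overline M})$. Chasing through, the resulting isomorphism $\insstarstab(R)\cong\prod_M\insstar(R_M)$ sends $\star$ to $(\star^{(M)})_M$ and a tuple to the operation \eqref{eq:stablepruf}, and is order-preserving in both directions.

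The main obstacle is the bookkeeping of this last paragraph: because the bijection of Proposition \ref{prop:star-semistar_stab} is built from the quotient map $\phi$ rather than from localization, checking that it still carries ``$I\mapsto\bigcap_M(IR_M)^{\star^{(M)}}$'' to an operation of the same shape one level down requires a careful analysis of how $\phi^{-1}$, finite and infinite intersections, and localization interact; relatedly, in case (a) — where $\dim R$ may be infinite — one must run the induction on $|\Max(R)|$, the strict decrease coming from the non-trivial Jaffard splitting that is forced after cutting the branch. Everything else is a routine assembly of results already established.
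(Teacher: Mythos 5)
Your proposal is correct, and its overall architecture (reduce via the standard decomposition using Theorems \ref{teor:star-jaffard} and \ref{teor:jaffard-corresp}, then induct by cutting the branch with Lemma \ref{lemma:prufer-jac} and Proposition \ref{prop:star-semistar_stab}) is exactly the paper's. Where you genuinely diverge is in how the key step is closed: the paper never transports the formula \eqref{eq:stablepruf} through the quotient correspondence at all. Instead it lets $\Delta$ be the set of operations of the form \eqref{eq:stablepruf}, observes $\Delta\subseteq\insstarstab(R)$ via Lemma \ref{lemma:intersez-valuation} and $|\Delta|=2^{|\mathcal{M}_R|}$, and then uses the induction only to compute the \emph{cardinality} $|\insstarstab(R)|=2^{|\mathcal{M}_R|}$; the equality $\Delta=\insstarstab(R)$ then follows from finiteness of $\insstar(R)$ for semilocal Pr\"ufer domains \cite[Theorem 5.3]{hmp_finite}. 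That counting argument is shorter and avoids all of your third-paragraph bookkeeping, but it leans on an external finiteness theorem, is non-constructive (it does not exhibit which tuple $(\star^{(P)})_P$ a given $\star$ corresponds to), and would not survive in a setting where $\insstar(R)$ is infinite; your explicit transport buys self-containedness and an effective description of the bijection. Two small points to tighten in your version: (1) in the divisorial case the identity $I=\bigcap_M(IR_M)^{\star^{(M)}}$ is not ``treated identically'' to the shifted case --- it needs the separate remark that the right-hand side defines a star operation on $R$, hence is bounded above by $v$ and so fixes divisorial ideals; (2) your separate induction on $\dim R$ in case (b) is unnecessary, since (as the paper notes) local finiteness forces each branch $T_\lambda$ of the standard decomposition to be semilocal, so case (b) reduces immediately to case (a).
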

\begin{proof}
For any ring $A$, let $\mathcal{M}_A$ be the set of maximal ideals of $A$ that are not divisorial.

Suppose first that $R$ is semilocal, and let $\Delta$ be the set of star operations defined as in \eqref{eq:stablepruf}. By Lemma \ref{lemma:intersez-valuation}, every star operation in $\Delta$ is stable; moreover, a maximal ideal $P$ is $\star$-closed if and only if $\star^{(P)}$ is the identity, and thus $|\Delta|=2^{|\mathcal{M}_R|}$. Since $\insstar(R)$ is finite \cite[Theorem 5.3]{hmp_finite}, it is enough to show that the cardinalities of $\Delta$ and $\insstarstab(R)$ are equal.

We proceed by induction on $n:=|\Max(R)|$; if $n=1$ the claim follows from Lemma \ref{lemma:intersez-valuation}. Suppose it holds up to $n-1$.

Let $\Theta$ be the standard decomposition of $R$. If $\Theta$ is not trivial, then by the inductive hypothesis the claim holds for every member of $\Theta$; by Theorem \ref{teor:star-jaffard}, $M\in\Max(R)$ is divisorial over $R$ if and only if $MT$ is divisorial over $T$ (where $T\in\Theta$ is such that $MT\neq T$), and thus $|\mathcal{M}_R|=\sum_{T\in\Theta}|\mathcal{M}_T|$. Since, by Theorem \ref{teor:jaffard-corresp}, we have $\insstarstab(R)\simeq\prod\{\insstarstab(T)\mid T\in\Theta\}$, it follows that the claim holds also for $R$.

Suppose $\Theta$ is trivial: then, $\Jac(R)$ must contain a nonzero prime ideal $P$ (and, by Lemma \ref{lemma:prufer-jac}, we can suppose $P$ is maximal with these properties). By Proposition \ref{prop:star-semistar_stab}, $|\insstarstab(R)|=|\insstarstab(R/P)|$; moreover, by Proposition \ref{prop:star-semistar}  $\mathcal{M}_R$ and $\mathcal{M}_{R/P}$ have the same cardinality. By the maximality of $P$, $R/P$ has a nontrivial standard decomposition; by induction, the claim holds for every member of the decomposition, and thus, with the same reasoning as above, we see that $|\insstarstab(R/P)|=2^{|\mathcal{M}_{R/P}|}$. Putting all together we have $|\insstarstab(R)|=2^{|\mathcal{M}_R|}$ and so $\insstarstab(R)=\Delta$ holds for every semilocal Pr\"ufer domain.

If $R$ is locally finite and finite-dimensional, then $\insstarstab(R)=\prod\{\insstarstab(T)\mid T\in\Theta\}$, where $\Theta$ is the standard decomposition of $R$. Each $T\in\Theta$ is semilocal, and thus we can apply the previous part of the proof; the claim follows.
\end{proof}

\begin{prop}\label{prop:intersez-prufer}
Let $R$ be a Pr\"ufer domain and suppose that $R$ is either:
\begin{enumerate}[(a)]
\item semilocal; or
\item locally finite and finite-dimensional.
\end{enumerate}
Then, the following are equivalent:
\begin{enumerate}[(i)]
\item $R$ is $h$-local;
\item every star operation on $R$ distributes over arbitrary intersections;
\item every star operation on $R$ distributes over finite intersections;
\item the $v$-operation on $R$ distributes over arbitrary intersections;
\item the $v$-operation on $R$ distributes over finite intersections;
\item for every fractional ideal $I$ of $R$, $I^v=\bigcap\{(IR_M)^{v^{(R_M)}}\mid M\in\Max(R)\}$.
\end{enumerate}
\end{prop}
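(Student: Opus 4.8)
The plan is to establish the cycle $(i)\Rightarrow(ii)\Rightarrow(iii)\Rightarrow(v)\Rightarrow(i)$ and to attach the two remaining conditions through $(ii)\Rightarrow(iv)\Rightarrow(v)$ and $(i)\Rightarrow(vi)\Rightarrow(v)$. Four of these implications carry no content: $(ii)\Rightarrow(iii)$, $(iii)\Rightarrow(v)$, $(ii)\Rightarrow(iv)$ and $(iv)\Rightarrow(v)$ all follow by specializing a statement about arbitrary intersections, resp.\ arbitrary star operations, to the two-element family $\{I,J\}$, resp.\ to the operation $v\in\insstar(R)$. So the actual work lies in $(i)\Rightarrow(ii)$, $(i)\Rightarrow(vi)$, $(vi)\Rightarrow(v)$ and $(v)\Rightarrow(i)$.

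For $(i)\Rightarrow(ii)$ I would use that, for $R$ $h$-local Pr\"ufer, $\Theta:=\{R_M\mid M\in\Max(R)\}$ is a Jaffard family whose members are valuation domains, so each $R_M$ is a Jaffard overring of $R$. Fix $\star\in\insstar(R)$. By Theorem \ref{teor:star-jaffard}, $\star$ is extendable to every $R_M$ and $I^\star=\bigcap_M(IR_M)^{\star_M}$ for all $I\in\insfracid(R)$; each $\star_M$ is a star operation on the valuation domain $R_M$, hence equals $d_{R_M}$ or $v^{(R_M)}$ (as recalled in the proof of Proposition \ref{prop:hloc-prufer}), and in either case it distributes over intersections of families with nonzero intersection --- trivially if $\star_M=d_{R_M}$, by Lemma \ref{lemma:intersez-valuation} if $\star_M=v^{(R_M)}$. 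Now take fractional ideals $\{I_\alpha\}_{\alpha\in A}$ with $\bigcap_\alpha I_\alpha\neq(0)$; Proposition \ref{prop:integintersect}\ref{prop:integintersect:a} gives $\left(\bigcap_\alpha I_\alpha\right)R_M=\bigcap_\alpha I_\alpha R_M$ (nonzero, as it contains $\bigcap_\alpha I_\alpha$), and therefore
\[
\left(\bigcap_\alpha I_\alpha\right)^{\star}=\bigcap_M\left(\bigcap_\alpha I_\alpha R_M\right)^{\star_M}=\bigcap_M\bigcap_\alpha(I_\alpha R_M)^{\star_M}=\bigcap_\alpha\bigcap_M(I_\alpha R_M)^{\star_M}=\bigcap_\alpha I_\alpha^{\star},
\]
which is $(ii)$.

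For $(i)\Rightarrow(vi)$ I would note that, by Theorem \ref{teor:star-jaffard}, $\lambda_\Theta$ and $\rho_\Theta$ are mutually inverse order-preserving bijections, so $\lambda_\Theta$ is an order-isomorphism $\insstar(R)\to\prod_M\insstar(R_M)$ and thus carries the maximum $v$ of $\insstar(R)$ to the maximum $(v^{(R_M)})_M$ of the product; equivalently, the extension of $v$ to each $R_M$ is $v^{(R_M)}$, and the formula of Theorem \ref{teor:star-jaffard} becomes $I^v=\bigcap_M(IR_M)^{v^{(R_M)}}$, which is $(vi)$. For $(vi)\Rightarrow(v)$ I would combine flatness of $R_M$ over $R$ on finite intersections with the stability of $v^{(R_M)}$ (a special case of Lemma \ref{lemma:intersez-valuation}): for $I,J\in\insfracid(R)$ one gets $(I\cap J)^v=\bigcap_M(IR_M\cap JR_M)^{v^{(R_M)}}=\bigcap_M\big[(IR_M)^{v^{(R_M)}}\cap(JR_M)^{v^{(R_M)}}\big]=I^v\cap J^v$.

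The implication $(v)\Rightarrow(i)$ is where the standing hypotheses on $R$ are used, and I expect it to be the main obstacle. The idea is to reduce to condition (iii) of Proposition \ref{prop:sommaintersez-prufer}. Assume $v$ is stable, let $I$ be a non-divisorial fractional ideal of $R$ (i.e.\ $I\neq I^v$) and $J$ an arbitrary fractional ideal, and suppose toward a contradiction that both $I\cap J$ and $I+J$ are divisorial. Stability gives $I^v\cap J^v=(I\cap J)^v=I\cap J$; monotonicity of $v$ gives $I+J\subseteq I^v+J^v\subseteq(I+J)^v=I+J$, hence $I^v+J^v=I+J$; and then the modular law, applied with $I\subseteq I^v$, yields $I^v=I^v\cap(I^v+J^v)=I^v\cap(I+J)=I+(I^v\cap J)$. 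Since $I^v\cap J\subseteq I^v\cap J^v=I\cap J\subseteq I$, this forces $I^v=I$, a contradiction. Hence condition (iii) of Proposition \ref{prop:sommaintersez-prufer} holds, and that proposition --- whose hypotheses coincide with ours --- gives that $R$ is $h$-local. The only genuinely delicate point is spotting the modular-law identity that converts stability of $v$ into condition (iii) of the earlier proposition; everything else is a matter of assembling Theorem \ref{teor:star-jaffard}, Lemma \ref{lemma:intersez-valuation} and Proposition \ref{prop:integintersect}.
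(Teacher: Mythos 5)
Your proof is correct, and for the two implications that carry real content it takes a genuinely different route from the paper's. The easy parts coincide: the paper also gets (i)$\Rightarrow$(ii) from Theorem \ref{teor:star-jaffard} together with Lemma \ref{lemma:intersez-valuation} (citing Lemma \ref{lemma:intersez-ritorno} where you use Proposition \ref{prop:integintersect}\ref{prop:integintersect:a} and the $d$-or-$v$ dichotomy on valuation rings -- the same computation, slightly repackaged), and it dismisses the specialization implications in one line. The differences are these. First, the paper obtains (v)$\iff$(vi) as a direct consequence of Proposition \ref{prop:prufer-stab}, the classification of stable star operations; you instead attach (vi) to the cycle via (i)$\Rightarrow$(vi)$\Rightarrow$(v), which avoids invoking that classification altogether. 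Second, and more substantially, for (v)$\Rightarrow$(i) the paper redoes the branch-cutting construction from scratch: it picks a non-local member $T$ of the standard decomposition, passes to $T/Q$ via Lemma \ref{lemma:prufer-jac} and Proposition \ref{prop:star-semistar}, and exhibits $Z$ and $Z'=\ortog{\Lambda}(Z)$ whose preimages are non-divisorial with divisorial intersection, so that $v$ fails to be stable. You instead deduce condition (iii) of Proposition \ref{prop:sommaintersez-prufer} from stability of $v$ by the modular-law identity $I^v=I^v\cap(I^v+J^v)=I^v\cap(I+J)=I+(I^v\cap J)\subseteq I$, and then quote that proposition's (iii)$\Rightarrow$(i). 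This is a correct and rather elegant reduction: the underlying counterexample construction is still doing the work, but it is hidden inside the proof of Proposition \ref{prop:sommaintersez-prufer}, so your argument buys economy (no repetition of the quotient-and-orthogonal-complement machinery) at the price of an extra dependence on the earlier proposition, whereas the paper's version is self-contained modulo Proposition \ref{prop:star-semistar}.
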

\begin{proof}
(i $\Longrightarrow$ ii) follows from Theorem \ref{teor:star-jaffard}, Lemma \ref{lemma:intersez-ritorno} and Lemma \ref{lemma:intersez-valuation}, since $\{R_M\mid M\in\Max(R)\}$ is a Jaffard family if $R$ is $h$-local. (ii $\Longrightarrow$ iii $\Longrightarrow$ v) and (ii $\Longrightarrow$ iv $\Longrightarrow$ v) are clear, while (v $\iff$ vi) follows from Proposition \ref{prop:prufer-stab}; we only have to show that (v $\Longrightarrow$ i).

Suppose (v) holds and let $\Theta$ be the standard decomposition of $R$. If $R$ is not $h$-local, then a branch $T\in\Theta$ is not local; the hypotheses on $R$ guarantee that there is a nonzero prime ideal of $T$ contained in every maximal ideal. Therefore, we can apply Lemma \ref{lemma:prufer-jac} and find a prime ideal $Q$ such that $\Jac(T/Q)$ contains no prime ideals. By Proposition \ref{prop:star-semistar}, there is an order-preserving bijection between $\insstar(T)$ and $\inssmstar(T/Q)$, where the $v$-operation on $T$ corresponds to the semistar operation $\star$ which is the trivial extension of the $v$-operation on $T/Q$.

Since $\Jac(T/Q)$ does not contain nonzero primes, $T/Q$ admits a nontrivial Jaffard family $\Lambda$; let $Z\in\Lambda$, and define $Z':=\bigcap_{W\in\Lambda\setminus\{Z\}}W=\ortog{\Lambda}(Z)$. Then, $Z$ and $Z'$ are not fractional ideals of $T/Q$, and thus $Z^\star=Z'^\star=F$, where $F$ is the quotient field of $T/Q$; on the other hand, $Z\cap Z'=T/Q$ and thus $(Z\cap Z')^\star=T/Q$.

If $\pi:T_Q\longrightarrow T_Q/QT_Q$ is the canonical quotient, it follows that $\pi^{-1}(Z)^v=\pi^{-1}(Z')^v=T_Q$, while $\pi^{-1}(Z\cap Z')^v=\pi^{-1}(T/Q)^v=T^v=T$. Since $T$ is not local, $T\neq T_Q$, and thus $v$ does not distribute over finite intersections, against the hypothesis.
\end{proof}

\section{The class group}\label{sect:jaff-InvCl}
Let $\star$ be a star operation on $R$. An ideal $I$ of $R$ is \emph{$\star$-invertible} if $(I(R:I))^\star=R$; the set of $\star$-invertible $\star$-ideals, indicated with $\Inv^\star(R)$, is a group under the natural ``$\star$-product'' $I\times_\star J\mapsto(IJ)^\star$ \cite{jaffard_systeme,griffin_vmultiplication_1967,zafrullah_tinvt,halterkoch_libro}. Any $\star$-invertible $\star$-ideal is divisorial \cite[Theorem 1.1 and Observation C]{zafrullah_tinvt} and, if $\star_1\leq\star_2$, there is a natural inclusion $\Inv^{\star_1}(R)\subseteq\Inv^{\star_2}(R)$.
\begin{prop}\label{prop:jaffard-invt}
Let $R$ be an integral domain and $\Theta$ be a Jaffard family on $R$. The map
\begin{equation*}
\begin{aligned}
\Gamma\colon \Inv^\star(R) & \longrightarrow \bigoplus_{T\in\Theta}\Inv^{\star_T}(T)\\
I & \longmapsto (IT)_{T\in\Theta}
\end{aligned}
\end{equation*}
is well-defined and a group isomorphism.
\end{prop}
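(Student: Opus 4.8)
The plan is to verify that $\Gamma$ is a well-defined group homomorphism and that it is bijective by exhibiting the explicit inverse $(I_T)_{T\in\Theta}\mapsto\bigcap_{T\in\Theta}I_T$. Throughout, I will use that every $\star\in\insstar(R)$ is extendable to every $T\in\Theta$ (Theorem \ref{teor:star-jaffard}), that extension commutes with products of ideals (since $(IJ)T=(IT)(JT)$ always), that completeness $I=\bigcap_{T\in\Theta}IT$ of a Jaffard family extends from integral to nonzero fractional ideals by clearing denominators, and that Lemma \ref{lemma:Tcolon} gives $(R:I)T=(T:IT)$ for any nonzero fractional ideal $I$.

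\emph{Well-definedness.} For $I\in\Inv^\star(R)$ and $T\in\Theta$, the ideal $IT$ is $\star_T$-closed because $(IT)^{\star_T}=I^\star T=IT$, and it is $\star_T$-invertible because, extending $(I(R:I))^\star=R$ and using Lemma \ref{lemma:Tcolon}, one gets $\bigl((IT)(T:IT)\bigr)^{\star_T}=(I(R:I))^\star T=T$. To see that $\Gamma(I)$ lies in the direct sum, write the (nonzero, since invertible) fractional ideal $I$ as $d^{-1}J$ with $J$ a nonzero integral ideal and $d\in R\setminus\{0\}$: local finiteness of $\Theta$ applied to $J$ and to $dR$ yields $JT=T$ and $dT=T$, hence $IT=T$, for all but finitely many $T\in\Theta$.

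\emph{Homomorphism and injectivity.} From $(IJ)T=(IT)(JT)$ we get $(I\times_\star J)T=\bigl((IJ)^\star\bigr)T=\bigl((IT)(JT)\bigr)^{\star_T}=(IT)\times_{\star_T}(JT)$, so $\Gamma$ is a homomorphism, and it clearly sends $R$ to the identity family $(T)_{T\in\Theta}$. If $\Gamma(I)=(T)_{T\in\Theta}$, i.e.\ $IT=T$ for all $T$, then completeness gives $I=\bigcap_{T\in\Theta}IT=\bigcap_{T\in\Theta}T=R$, so $\Gamma$ is injective.

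\emph{Surjectivity (the main point).} Given $(I_T)_{T\in\Theta}$ with $I_T=T$ for $T\notin\{T_1,\dots,T_n\}$, set $I:=\bigcap_{T\in\Theta}I_T$. First I would check $I\neq(0)$: for each $i$, picking a nonzero $e\in I_{T_i}$ and writing $e=a_i/b_i$ with $a_i,b_i\in R$ shows $a_i=b_i e\in I_{T_i}\cap R$, and then $a_1\cdots a_n$ lies in $\bigcap_i I_{T_i}\cap R\subseteq I$. A common denominator in $R$ for the finitely many $I_{T_i}$ is also a denominator for $I$, so $I$ is a fractional ideal; hence Lemma \ref{lemma:intersez-ritorno} applies and gives $IT=I_T$ for every $T\in\Theta$. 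It remains to see $I\in\Inv^\star(R)$: by completeness and the definition of extension, $I^\star=\bigcap_T I^\star T=\bigcap_T(IT)^{\star_T}=\bigcap_T I_T=I$, so $I$ is a $\star$-ideal, and $(I(R:I))^\star=\bigcap_T\bigl((IT)(T:IT)\bigr)^{\star_T}=\bigcap_T T=R$ (using Lemma \ref{lemma:Tcolon} and that $I_T$ is $\star_T$-invertible), so $I$ is $\star$-invertible. Then $\Gamma(I)=(IT)_{T\in\Theta}=(I_T)_{T\in\Theta}$.

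The routine parts are the compatibility of extension with ideal products and colons; the step requiring genuine care is surjectivity, namely checking that $\bigcap_T I_T$ is a nonzero fractional ideal (so that Lemma \ref{lemma:intersez-ritorno} pins down all its extensions) together with the bookkeeping that completeness and Lemma \ref{lemma:Tcolon} are available for nonzero fractional — not just integral — ideals.
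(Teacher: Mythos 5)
Your proof is correct and follows essentially the same route as the paper's: the same verification of well-definedness and injectivity via Lemma \ref{lemma:Tcolon} and the completeness of $\Theta$, and the same explicit inverse $(I_T)_{T\in\Theta}\mapsto\bigcap_{T\in\Theta}I_T$ for surjectivity. The only differences are cosmetic --- you make explicit the nonzeroness of $\bigcap_T I_T$ and the appeal to Lemma \ref{lemma:intersez-ritorno} to identify $IT$ with $I_T$, steps the paper leaves implicit.
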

\begin{proof}
Define a map
\begin{equation*}
\begin{aligned}
\widehat{\Gamma}\colon \mathcal{F}(R) & \longrightarrow \prod_{T\in\Theta}\mathcal{F}(T)\\
I & \longmapsto (IT)_{T\in\Theta}
\end{aligned}
\end{equation*}
For every $\star$-ideal $I$, $\widehat{\Gamma}(I)=(IT)$ is a sequence such that $IT$ is $\star_T$-closed. Moreover, if $I$ is $\star$-invertible, then $(I(R:I))^\star=R$ and thus $(I(R:I)T)^{\star_T}=T$, so that $IT$ is $\star_T$-invertible. Thus $\widehat{\Gamma}(\Inv^\star(R))\subseteq\prod_{T\in\Theta}\Inv^{\star_T}(T)$, and indeed $\widehat{\Gamma}(\Inv^\star(R))\subseteq\bigoplus_{T\in\Theta}\Inv^{\star_T}(T)$ since $\Theta$ is locally finite, by Theorem \ref{teor:star-jaffard}. Hence, $\Gamma$ is well-defined, since it is the restriction of $\widehat{\Gamma}$ to $\Inv^\star(R)$.

It is straightforward to verify that $\Gamma$ is a group homomorphism, and since $I=\bigcap_{T\in\Theta}IT$, we have that $\Gamma$ (or even $\widehat{\Gamma}$) is injective.

We need only to show that $\Gamma$ is surjective. Let $(I_T)\in\bigoplus_{T\in\Theta}\Inv^{\star_T}(T)$, and define $I:=\bigcap I_T$. Since $I_T=T$ for all but a finite number of elements of $\Theta$, say $T_1,\ldots,T_n$, there are $d_1,\ldots,d_n\in R$ such that $d_iI_{T_i}\subseteq T_i$. Defining $d:=d_1\cdots d_n$, we have $dI_T\subseteq T$ for every $T$, and thus $dI\subseteq\bigcap_{T\in\Theta}T=R$, so that $I$ is indeed a fractional ideal of $R$. Moreover, since $I_T$ is $\star_T$-closed, $I_T\cap R$ is $\star$-closed, and thus $I$, being the intersection of a family of $\star$-closed ideals, is $\star$-closed. It is also $\star$-invertible, since
\begin{equation*}
(I(R:I))^\star=\bigcap_{T\in\Theta}(I(R:I)T)^{\star_T}=\bigcap_{T\in\Theta}(IT(T:IT))^{\star_T}=\bigcap_{T\in\Theta}T=R.
\end{equation*}
Therefore, $(I_T)=\Gamma(I)\in\Gamma(\Inv^\star(R))$, and thus $\Gamma$ is an isomorphism.
\end{proof}

The set of nonzero principal fractional ideals forms a subgroup of $\Inv^\star(R)$, denoted by $\Prin(I)$. The quotient between $\Inv^\star(R)$ and $\Prin(R)$ is called the \emph{$\star$-class group} of $R$ \cite{anderson_generalCG_1988}, and it is denoted by $\Cl^\star(R)$. If $\star_1\leq\star_2$, there is an injective homomorphism $\Cl^{\star_1}(R)\subseteq\Cl^{\star_2}(R)$. Of particular interest are the class group of the identity star operation (usually called the \emph{Picard group} of $R$, denoted by $\Pic(R)$) and the $t$-class group, which is linked to the factorization properties of the group (see for example \cite{samuel_factoriel,bouvier_zaf_1988,zafrullah_tinvt}). The quotient between $\Cl^\star(R)$ and $\Pic(R)$ is called the \emph{$\star$-local class group} of $R$, and it is indicated by $G_\star(R)$ \cite{anderson_generalCG_1988}.
\begin{teor}\label{teor:jaffard-clgroup}
Let $R$ be an integral domain and let $\Theta$ be a Jaffard family on $R$. Then, the map
\begin{equation*}
\begin{aligned}
\Lambda\colon G_\star(R) & \longrightarrow \bigoplus_{T\in\Theta}G_{\star_T}(T)\\
[I] & \longmapsto ([IT])_{T\in\Theta}
\end{aligned}
\end{equation*}
is well-defined and a group isomorphism.
\end{teor}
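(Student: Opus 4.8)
The plan is to deduce the theorem from Proposition~\ref{prop:jaffard-invt} by passing to suitable quotients. First I would record, for an arbitrary domain $A$ and $\sharp\in\insstar(A)$, the canonical identification $G_\sharp(A)\cong\Inv^\sharp(A)/\Inv^d(A)$: indeed $\Cl^\sharp(A)=\Inv^\sharp(A)/\Prin(A)$, the subgroup $\Pic(A)=\Inv^d(A)/\Prin(A)$ is embedded via the inclusion $\Inv^d(A)\subseteq\Inv^\sharp(A)$ (which exists since $d\leq\sharp$), and the third isomorphism theorem gives $G_\sharp(A)=\Cl^\sharp(A)/\Pic(A)\cong\Inv^\sharp(A)/\Inv^d(A)$, with the class $[I]$ corresponding to the coset $I\cdot\Inv^d(A)$. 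Applying this to $A=R$, $\sharp=\star$ and to $A=T$, $\sharp=\star_T$ for each $T\in\Theta$ (using that $d_T$ is the identity star operation on $T$ by Proposition~\ref{prop:starloc:basic}\ref{prop:starloc:basic:id}, so that $\Inv^{d_T}(T)=\Inv^d(T)$), the theorem becomes the assertion that $I\cdot\Inv^d(R)\mapsto\bigl(IT\cdot\Inv^d(T)\bigr)_{T\in\Theta}$ defines a group isomorphism $\Inv^\star(R)/\Inv^d(R)\to\bigoplus_{T\in\Theta}\bigl(\Inv^{\star_T}(T)/\Inv^d(T)\bigr)$.

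Next I would apply Proposition~\ref{prop:jaffard-invt} twice: once to $\star$, yielding a group isomorphism $\Gamma\colon\Inv^\star(R)\to\bigoplus_{T\in\Theta}\Inv^{\star_T}(T)$, $I\mapsto(IT)_{T\in\Theta}$, and once to the identity operation $d$ on $R$, yielding a group isomorphism $\Gamma_d\colon\Inv^d(R)\to\bigoplus_{T\in\Theta}\Inv^{d_T}(T)=\bigoplus_{T\in\Theta}\Inv^d(T)$, also given by $I\mapsto(IT)_{T\in\Theta}$. Since the two maps are defined by the same formula, $\Gamma$ restricts on the subgroup $\Inv^d(R)$ to $\Gamma_d$; in particular $\Gamma$ carries $\Inv^d(R)$ isomorphically onto $\bigoplus_{T\in\Theta}\Inv^d(T)$.

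It follows that $\Gamma$ induces an isomorphism on quotients
\begin{equation*}
\overline{\Gamma}\colon \frac{\Inv^\star(R)}{\Inv^d(R)}\ \xrightarrow{\ \sim\ }\ \left(\bigoplus_{T\in\Theta}\Inv^{\star_T}(T)\right)\Big/\left(\bigoplus_{T\in\Theta}\Inv^d(T)\right)\ \cong\ \bigoplus_{T\in\Theta}\frac{\Inv^{\star_T}(T)}{\Inv^d(T)},
\end{equation*}
the last step being the canonical identification of the quotient of a direct sum by a direct sum of subgroups with the direct sum of the quotients. Unravelling the identifications of the first paragraph, $\overline{\Gamma}$ is precisely the map $\Lambda$, so $\Lambda$ is well-defined and an isomorphism. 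The argument is essentially formal once Proposition~\ref{prop:jaffard-invt} is in hand; the only step that deserves care is the compatibility of the identification $G_\sharp(A)\cong\Inv^\sharp(A)/\Inv^d(A)$ with the explicit description of $\Lambda$ — that is, that the class $[I]\in G_\star(R)$ corresponds to the coset $I\cdot\Inv^d(R)$, which $\Gamma$ sends to $(IT)_{T\in\Theta}\cdot\bigoplus_{T\in\Theta}\Inv^d(T)$, corresponding to $([IT])_{T\in\Theta}$. Alternatively one could bypass this bookkeeping and argue directly, in the style of the proof of Proposition~\ref{prop:jaffard-invt}: $\Lambda$ is well-defined because extensions of principal (resp.\ invertible) fractional ideals remain principal (resp.\ invertible), it is visibly a homomorphism, and it is bijective with inverse $(J_T)_{T\in\Theta}\mapsto\bigl[\bigcap_{T\in\Theta}J_T\bigr]$.
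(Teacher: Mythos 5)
Your proposal is correct and follows essentially the same route as the paper: both apply Proposition~\ref{prop:jaffard-invt} to $\star$ and to $d$, observe that the two isomorphisms agree on $\Inv^d(R)$ so that the quotient map $\Inv^\star(R)/\Inv^d(R)\to\bigoplus_{T\in\Theta}\Inv^{\star_T}(T)/\Inv^{d_T}(T)$ is an isomorphism, and then identify $G_\sharp(A)$ with $\Inv^\sharp(A)/\Inv^d(A)$ via $\Prin(A)\subseteq\Inv^d(A)\subseteq\Inv^\sharp(A)$. The only cosmetic difference is that the paper phrases the middle step as a kernel computation for $\pi\circ\Gamma^\star$ rather than as an induced map on quotients.
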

\begin{proof}
By Proposition \ref{prop:jaffard-invt}, there are two isomorphisms $\Gamma^\star:\Inv^\star(R)\longrightarrow \bigoplus_{T\in\Theta}\Inv^{\star_T}(T)$ and $\Gamma^d:\Inv^d(R)\longrightarrow \bigoplus_{T\in\Theta}\Inv^{d_T}(T)$.

Consider the chain of maps
\begin{equation*}
\Inv^\star(R)\xrightarrow{\Gamma^\star}\bigoplus_{T\in\Max(T)}\Inv^{\star_T}(T) \xrightarrow{\pi}\bigoplus_{T\in\Max(T)}\frac{\Inv^{\star_T}(T)}{\Inv^{d_T}(T)}
\end{equation*}
where $\pi$ is the componentwise quotient; then, the kernel of $\pi$ is exactly $\bigoplus_{T\in\Theta}\Inv^{d_T}(T)$. However, $\Gamma^\star$ and $\Gamma^d$ coincide on $\Inv^d(R)\subseteq\Inv^\star(R)$; hence, 
\begin{equation*}
\ker(\pi\circ\Gamma^\star)=(\Gamma^d)^{-1}(\ker\pi)=\Inv^d(R).
\end{equation*}
Therefore, there is an isomorphism $\displaystyle{\frac{\Inv^\star(R)}{\Inv^d(R)}\simeq\bigoplus_{T\in\Max(T)}\frac{\Inv^{\star_T}(T)}{\Inv^{d_T}(T)}}$. However, for an arbitrary domain $A$ and an arbitrary $\sharp\in\insstar(A)$, we have $\Prin(A)\subseteq\Inv^d(A)\subseteq\Inv^\sharp(A)$, and thus
\begin{equation*}
\frac{\Inv^\sharp(A)}{\Inv^d(A)}\simeq\frac{\Inv^\sharp(A)/\Prin(A)}{\Inv^d(A)/\Prin(A)}\simeq\frac{\Cl^\sharp(A)}{\Pic(A)}=G_\sharp(A)
\end{equation*}
so that $\Lambda$ becomes an isomorphism between $G_\star(R)$ and $\bigoplus_{T\in\Theta}G_{\star_T}(T)$, as claimed.
\end{proof}

\subsection{The class group of a Pr\"ufer domain}\label{sect:prufer-Cl}
If $\star$ is a (semi)star operation, we can define the $\star$-class group by mirroring the definition of the case of star operations: we say that $I$ is $\star$-invertible if $(I(R:I))^\star=R$, and we define $\Cl^\star(R)$ as the quotient between the group of the $\star$-invertible $\star$-ideals (endowed with the $\star$-product) and the subgroup of principal ideals. Since $(R:I)=(0)$ if $I\in\inssubmod(R)\setminus\insfracid(R)$, every $\star$-invertible ideal is a fractional ideal, and thus $\Cl^\star(R)$ coincides with $\Cl^{\star'}(R)$, where $\star':=\star|_{\insfracid(R)}$ is the restriction of $\star$.

The first result of this section is that Proposition \ref{prop:star-semistar} can be extended to the class group.
\begin{prop}\label{prop:star-semistar-clgroup}
Let $R$ be a Pr\"ufer domain and let $P$ be a nonzero prime ideal of $R$ contained in every maximal ideal. Suppose also that $P\notin\Max(R)$. Let $\star\in\insstar(R)$ and let $\sharp$ be the corresponding (semi)star operation on $D:=R/P$. Then, $\Cl^\star(R)$ is naturally isomorphic to $\Cl^\sharp(D)$.
\end{prop}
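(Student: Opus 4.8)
The strategy is to upgrade the order-preserving bijection $\star\leftrightarrow\sharp$ of Proposition~\ref{prop:star-semistar} to an isomorphism $\Cl^\star(R)\simeq\Cl^\sharp(D)$, transporting fractional ideals along the quotient map $\phi\colon R_P\longrightarrow R_P/PR_P=k$, where $k$ is the quotient field of $D$. First I would record the structural facts needed. Since $P$ is a nonzero prime contained in every maximal ideal of the Pr\"ufer domain $R$, it is a divided prime: for $x\notin P$ the fractional ideal $x^{-1}P$ is contained in $R_M$ at every maximal ideal $M$ (as $R_M$ is a valuation overring and $x\notin PR_M$), hence $x^{-1}P\subseteq R$. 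Thus $PR_P=P$; the conductor $(R:R_P)$ is then a proper ideal of $R_P$ contained in $R$, so it lies in $PR_P=P$ and in fact equals $P$, and one deduces that $P$ is divisorial and that $R_P$ is a divisorial fractional ideal of $R$. Consequently every $R$-submodule $M$ of $K$ with $P\subseteq M\subseteq R_P$ is a fractional ideal of $R$, and $\phi$ restricts to an inclusion- and product-preserving bijection between these $M$ and the $D$-submodules of $k$, exactly as used in the proof of Proposition~\ref{prop:star-semistar_stab}; that proof also records that every fractional ideal of $R$ is comparable with both $P$ and $R_P$. Finally, since $\star\le v$ and $P$, $R_P$ are divisorial, both are $\star$-closed, and through the bijection of Proposition~\ref{prop:star-semistar} one has $\phi(L^\star)=\phi(L)^\sharp$ for every $L$ with $P\subseteq L\subseteq R_P$.

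The key reduction --- and the step I expect to be the main obstacle --- is the following normalization: every $\star$-invertible $\star$-ideal $I$ of $R$ admits a scalar $\alpha\in K^\times$ with $P\subsetneq\alpha I\subseteq R_P$. Since $[I]=[\alpha I]$ in $\Cl^\star(R)$, this lets me assume $I$ lies between $P$ and $R_P$. To prove it I would argue: $I(R:I)\not\subseteq P$, because otherwise $R=(I(R:I))^\star\subseteq P^\star=P$, which is absurd; hence the integral ideal $I(R:I)R_P$ of the valuation ring $R_P$ is not contained in its maximal ideal $PR_P$ (were it contained, intersecting with $R$ would give $I(R:I)\subseteq PR_P\cap R=P$), so $I(R:I)R_P=R_P$, which forces $IR_P$ to be an invertible --- hence principal --- fractional ideal of $R_P$, say $IR_P=\beta R_P$. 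Taking $\alpha:=\beta^{-1}$ gives $\alpha I\subseteq R_P$ and $\alpha IR_P=R_P$, so $\alpha I\not\subseteq PR_P=P$, and comparability yields $P\subsetneq\alpha I\subseteq R_P$. This argument is in the spirit of \cite[Sections 3--5]{hmp_finite}; everything after it is essentially formal.

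So assume from now on that $I$ lies between $P$ and $R_P$. A short colon computation gives $\phi((R:I))=(D:\phi(I))$: the inclusion $\subseteq$ is clear, while if $z\in(D:\phi(I))$, lifting $z$ to some $x\in R_P$ gives $\phi(xI)\subseteq\phi(R)=D$, hence $xI\subseteq\phi^{-1}(D)=R$ and $z\in\phi((R:I))$. Combining this with $\phi(L^\star)=\phi(L)^\sharp$ and the multiplicativity of $\phi$ on submodules of $R_P$, the identity $(I(R:I))^\star=R$ turns into $(\phi(I)(D:\phi(I)))^\sharp=D$, so $\phi(I)$ is a $\sharp$-invertible $\sharp$-ideal of $D$. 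I would then define $\Psi\colon\Inv^\star(R)\to\Cl^\sharp(D)$ by $\Psi(I):=[\phi(\alpha I)]$ for a normalizing $\alpha$; two admissible scalars differ by a unit of $R_P$, whose $\phi$-image is a unit of $k$, so $\Psi$ is well defined. It is a homomorphism: for $\star$-invertible $I,J$ normalized into $[P,R_P]$ the product $\alpha\alpha' IJ$ again lies in $[P,R_P]$ --- it sits inside $R_P$, and it is not inside $P$ because $\phi(\alpha I)\phi(\alpha' J)$ is a nonzero fractional ideal of $D$ --- so $\phi(\alpha\alpha'(IJ)^\star)=\phi((\alpha\alpha' IJ)^\star)=\bigl(\phi(\alpha I)\phi(\alpha' J)\bigr)^\sharp$, and hence $\Psi((IJ)^\star)=\Psi(I)\Psi(J)$. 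Since principal fractional ideals of $R$ go to principal fractional ideals of $D$, $\Psi$ vanishes on $\Prin(R)$ and descends to $\overline\Psi\colon\Cl^\star(R)\to\Cl^\sharp(D)$.

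It remains to invert $\overline\Psi$. For surjectivity, given a $\sharp$-invertible $\sharp$-ideal $J$ of $D$, set $M:=\phi^{-1}(J)$, an $R$-submodule of $K$ between $P$ and $R_P$, hence a fractional ideal of $R$; running the previous paragraph backwards gives $\phi((R:M))=(D:J)$ and $(M(R:M))^\star=R$, so $M\in\Inv^\star(R)$ and $\overline\Psi([M])=[J]$. For injectivity, if $\overline\Psi([I])=0$ with $I$ between $P$ and $R_P$, then $\phi(I)=\overline\beta D$ for some $\overline\beta\in k^\times$; lifting $\overline\beta$ to a unit $\beta\in R_P^\times$ one has $\phi^{-1}(\overline\beta D)=\beta R$, and since $\phi$ is injective on submodules between $P$ and $R_P$ it follows that $I=\beta R$ is principal, so $[I]=0$. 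Hence $\overline\Psi$ is an isomorphism, and since it is induced by the single map $\phi$ it is natural.
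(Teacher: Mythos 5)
Your proof is correct and follows essentially the same route as the paper's: both rest on the colon formula $\phi((R:I))=(D:\phi(I))$, the bijection between submodules lying between $P$ and $R_P$ and the $D$-submodules of $k$, and the normalization of a $\star$-invertible ideal into that interval, with your map simply going in the direction $\Cl^\star(R)\to\Cl^\sharp(D)$ rather than the paper's $\Cl^\sharp(D)\to\Cl^\star(R)$. The only local variation is at the normalization step, where the paper deduces from $v$-invertibility that $(I:I)=R$ and takes an element realizing $\inf\valut(I)$, while you observe directly that $I(R:I)R_P=R_P$ forces $IR_P$ to be principal over the valuation ring $R_P$ --- an equivalent, and if anything slightly more self-contained, justification.
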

\begin{proof}
Let $\pi:R_P\longrightarrow F=Q(D)$ be the quotient map, and let $I$ be a fractional ideal of $R$ contained between $P$ and $R_P$. We claim that $\pi((R:I))=(D:\pi(I))$. In fact, if $y\in\pi((R:I))$ then $y=\pi(x)$ for some $x\in(R:I)$, and thus $y\pi(I)=\pi(x)\pi(I)=\pi(xI)\subseteq\pi(R)=D$, and thus $x\in(D:\pi(I))$. Conversely, if $y\in(D:\pi(I))$ and $y=\pi(x)$ then $y\pi(I)\subseteq D$, i.e., $\pi(xI)\subseteq D$. By the correspondence between $R$-submodules of $R_P$ and $D$-submodules of $F$ we have $xI\subseteq R$ and $y\in\pi((R:I))$.

Let $J=\pi(I)$ be a $\sharp$-invertible ideal of $D$. Then, $(J(D:J))^\sharp=D$, and thus
\begin{equation*}
\begin{array}{rcl}
R & = & \pi^{-1}\left((J(D:J))^\sharp\right)=\pi^{-1}(J(D:J))^\star=\\
& = & \left(\pi^{-1}(J)\pi^{-1}(D:J)\right)^\star=(I(R:I))^\star.
\end{array}
\end{equation*}
Therefore, $I$ is $\star$-invertible, and there is an injective map $\theta:\Inv^\sharp(D)\longrightarrow \Inv^\star(R)$. It is also straightforward to see that $\theta$ is a group homomorphism.

The well-definedness of the map $\star\mapsto\star_\phi$ implies that, if $J,J'$ are $D$-submodules of $F$, and $I:=\pi^{-1}(J)$, $I':=\pi^{-1}(J')$, then $J=zJ'$ for some $z\in F$ if and only if $I=wI'$ for some $w\in K$. Therefore, $\theta$ induces an injective map $\overline{\theta}:\Cl^\sharp(D)\longrightarrow \Cl^\star(R)$, that is clearly is a group homomorphism.

Let now $I$ be a $\star$-invertible ideal of $R$. Then, $I$ is $v$-invertible, and thus $(I:I)=R$ \cite[Proposition 34.2(2)]{gilmer}. In particular, $I$ is not a $R_P$-module, and thus the set $\valut(I)$ has an infimum $\alpha$, where $\valut$ is the valuation associated to $R_P$. If $a$ is an element of valuation $\alpha$, then $P\subsetneq a^{-1}I\subsetneq R_P$; hence, $a^{-1}I=\phi^{-1}(\phi(a^{-1}I))$ and $[I]=\overline{\theta}([\pi(a^{-1}I)])$, and in particular $[I]$ is in the image of $\overline{\theta}$. Since $I$ was arbitrary, $\overline{\theta}$ is surjective and $\Cl^\sharp(D)\simeq\Cl^\star(R)$.
\end{proof}

\begin{teor}\label{teor:clgroup-prufer}
Let $R$ be a Pr\"ufer domain,  and suppose that $R$ is either:
\begin{enumerate}[(a)]
\item semilocal; or
\item locally finite and finite-dimensional.
\end{enumerate}
Consider a star operation $\star$ on $R$. Then,
\begin{equation*}
G_\star(R)\simeq\bigoplus_{\substack{M\in\Max(R)\\ M\neq M^\star}}\Cl^v(R_M).
\end{equation*}
\end{teor}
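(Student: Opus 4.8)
The plan is to mirror the proof of Proposition \ref{prop:prufer-stab}: first establish the semilocal case (a) by a double induction on $|\Max(R)|$, peeling off branches via Jaffard families when the standard decomposition is nontrivial and ``cutting a branch'' (passing to a quotient) when it is trivial, and then reduce case (b) to case (a). The base of the induction is $|\Max(R)|=1$, i.e.\ $R$ a valuation domain $V$ with maximal ideal $N$: then $\insstar(V)$ has at most two elements, $d$ and $v$, and $\Pic(V)=0$, so $\Cl^d(V)=0$; if $\star=d$ both sides vanish (indeed $N^d=N$), while if $\star=v\neq d$ then $N$ is not divisorial, whence $N^v=V\neq N$ and $G_v(V)=\Cl^v(V)=\Cl^v(V_N)$, matching the right-hand side.

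For the inductive step, let $R$ be semilocal Pr\"ufer with $|\Max(R)|=n>1$ and assume the statement for all semilocal Pr\"ufer domains with fewer maximal ideals; let $\Theta$ be the standard decomposition of $R$, a Jaffard family by Proposition \ref{prop:Tlambda}. Suppose first that $\Theta$ is nontrivial. Then each $T\in\Theta$ is semilocal and, since $\Theta$ partitions $\Max(R)$ into at least two nonempty classes $\Delta_T$, has $|\Max(T)|<n$; moreover $\Max(T)=\{MT:M\in\Delta_T\}$, $T_{MT}=R_M$ (Remark \ref{oss:def}), and $(MT)^{\star_T}=M^\star T$, so that $(MT)^{\star_T}\neq MT\iff M^\star\neq M$ (intersect with $R$, using $MT\cap R=M$). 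Applying the inductive hypothesis to $(T,\star_T)$ and then Theorem \ref{teor:jaffard-clgroup} gives $G_\star(R)\simeq\bigoplus_{T\in\Theta}G_{\star_T}(T)\simeq\bigoplus_{M\in\Max(R),\,M\neq M^\star}\Cl^v(R_M)$, as desired.

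Now suppose $\Theta$ is trivial: then $R$ is not local and its (finitely many, pairwise dependent) maximal ideals share a nonzero prime, so $\Jac(R)$ contains a nonzero prime; by Lemma \ref{lemma:prufer-jac} we may choose a nonzero prime $P\subseteq\Jac(R)$ with $P\notin\Max(R)$ (else $R$ would be local) such that $\Jac(R/P)$ contains no nonzero prime, so that $R/P$ has a \emph{nontrivial} standard decomposition while $|\Max(R/P)|=n$ (all maximal ideals of $R$ contain $P$). Since $R$ and $R/P$ are semilocal Pr\"ufer, hence B\'ezout, $\Pic(R)=\Pic(R/P)=0$, so $G_\star(R)=\Cl^\star(R)\simeq\Cl^\sharp(R/P)$ by Proposition \ref{prop:star-semistar-clgroup}, where $\sharp$ is the (semi)star operation corresponding to $\star$. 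The key point is that $\Cl^\sharp(R/P)=\Cl^{\sharp'}(R/P)=G_{\sharp'}(R/P)$, where $\sharp':=\sharp|_{\insfracid(R/P)}$ is an honest \emph{star} operation on $R/P$; hence we may run the previous paragraph's branch argument on $(R/P,\sharp')$ — whose branches have fewer than $n$ maximal ideals — and obtain $G_{\sharp'}(R/P)\simeq\bigoplus_{\overline M\neq\overline M^{\sharp'}}\Cl^v((R/P)_{\overline M})$. Finally $(R/P)_{\overline M}=R_M/PR_M$, and one more application of Proposition \ref{prop:star-semistar-clgroup}, now to the valuation domain $R_M$ and its nonzero non-maximal prime $PR_M$ (noting that under Proposition \ref{prop:star-semistar} the $v$-operation of $R_M$ corresponds to the top (semi)star operation of $R_M/PR_M$, whose restriction to fractional ideals is $v^{(R_M/PR_M)}$), yields $\Cl^v((R/P)_{\overline M})\simeq\Cl^v(R_M)$; together with $\overline M^{\sharp'}=M^\star/P$, so that $\overline M\neq\overline M^{\sharp'}\iff M^\star\neq M$, this completes case (a).

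For case (b): if $R$ is locally finite and finite-dimensional then $\Max(R)$ is a Noetherian space (Remark \ref{oss:prufjaff}), the standard decomposition $\Theta$ is a Jaffard family (Proposition \ref{prop:Tlambda}), and each $T_\lambda$ is semilocal, since $\Delta_\lambda=V(Q)\cap\Max(R)$ for a height-one prime $Q$ (Lemma \ref{lemma:dimfin}) and local finiteness forces $\Delta_\lambda$ finite; Theorem \ref{teor:jaffard-clgroup}, case (a), and the reassembly from the second paragraph then give the result. The step I expect to be the main obstacle is the trivial-decomposition case: one must justify that replacing the semistar operation $\sharp$ on $R/P$ by its restriction $\sharp'$ costs nothing at the level of class groups (so that Theorem \ref{teor:jaffard-clgroup}, which genuinely fails for semistar operations, remains applicable), and then carefully match the pieces of \emph{two} different decompositions — the quotient $R\rightsquigarrow R/P$ and the standard decomposition of $R/P$ — back onto the maximal ideals of $R$, transporting the ``$\star$-closed'' predicate through both the extension-to-Jaffard-overrings correspondence and the quotient correspondence.
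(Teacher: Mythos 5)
Your proof follows the paper's argument essentially step for step: induction on $|\Max(R)|$ in the semilocal case, splitting via the standard decomposition and Theorem \ref{teor:jaffard-clgroup} when that decomposition is nontrivial, passing to $R/P$ via Lemma \ref{lemma:prufer-jac} and Proposition \ref{prop:star-semistar-clgroup} when it is trivial, and reducing case (b) to case (a) through the (semilocal) branches of the standard decomposition. The only deviation is at the very end of the trivial-decomposition case, where the paper invokes an external result (\cite[Theorem 3.5]{afz_vclass}) to identify $\Cl^v(R_M)$ with $\Cl^v((R/P)_{\overline{M}})$, whereas you rederive this by a second application of Proposition \ref{prop:star-semistar-clgroup} to the valuation domain $R_M$ and the prime $PR_M$ --- which is correct and, if anything, more self-contained.
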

\begin{proof}
We start by considering the case of $R$ semilocal, and we proceed by induction on the number $n$ of maximal ideals of $R$. Note that, in this case, $\Pic(R)=(0)$ and so $G_\star(R)=\Cl^\star(R)$. If $n=1$, the conclusion is trivial, since $\star\neq v$ if and only if $M\neq M^\star$.

Suppose $n>1$ and let $\Theta$ be the standard decomposition of $R$ (which is a Jaffard family by Proposition \ref{prop:Tlambda}). By Theorem \ref{teor:jaffard-clgroup}, and using the fact that $\Pic(R)=(0)=\Pic(T)$ for every $T\in\Theta$, we have $\Cl^\star(R)\simeq\bigoplus_{T\in\Theta}\Cl^{\star_T}(T)$. Moreover, since a maximal ideal $M$ of $R$ is $\star$-closed if and only if $MT$ is $\star_T$-closed, by induction it suffices to prove the theorem when the standard decomposition of $R$ is $\{R\}$.

In this case, $\Jac(R)$ contains nonzero primes, and by Lemma \ref{lemma:prufer-jac} we can find a prime ideal $Q\subseteq\Jac(R)$ such that $\Jac(R/Q)$ does not contain nonzero prime ideals. Let $A:=R/Q$.

The standard decomposition $\Theta'$ of $A$ is nontrivial, and thus every $B\in\Theta'$ is a semilocal Pr\"ufer domain with less than $n$ maximal ideals. Moreover, by Proposition \ref{prop:star-semistar-clgroup}, $\Cl^\star(R)\simeq\Cl^\sharp(A)$, where $\sharp$ is the restriction to $\insfracid(A)$ of the (semi)star operation corresponding to $\star$. Therefore, by the inductive hypothesis, 
\begin{equation*}
\Cl^\sharp(A)\simeq\bigoplus_{B\in\Theta'}\Cl^v(B)\simeq \bigoplus_{B\in\Theta'}\bigoplus_{\substack{N\in\Max(B)\\ N\neq N^{\sharp_B}}}\Cl^v(B_N)\simeq\bigoplus_{\substack{N\in\Max(A)\\ N\neq N^\sharp}}\Cl^v(A_N).
\end{equation*}
Thus, 
\begin{equation*}
\Cl^\star(R)\simeq\Cl^\sharp(A)\simeq\bigoplus_{\substack{N\in\Max(A)\\ N\neq N^\sharp}}\Cl^v(A_N).
\end{equation*}
However, if $M$ is the maximal ideal of $R$ which corresponds to the maximal ideal $N$ of $A$, then $R_M/QR_M\simeq A_N$, and thus by \cite[Theorem 3.5]{afz_vclass} we have $\Cl^v(R_M)\simeq\Cl^v(A_N)$; the claim follows.

Suppose now $R$ is finite-dimensional and of finite character, and let $\Theta$ be the standard decomposition of $R$. By Lemma \ref{lemma:dimfin}, there is a bijective correspondence between $\Theta$ and the height 1 prime ideals of $R$, and every $T\in\Theta$ is semilocal. Hence, by Proposition \ref{prop:Tlambda} and by the previous case,
\begin{equation*}
G_\star(R)\simeq\bigoplus_{T\in\Theta}G_{\star_T}(T)\simeq \bigoplus_{T\in\Theta}\Cl^{\star_T}(T)\simeq \bigoplus_{T\in\Theta}\bigoplus_{\substack{M\in\Max(T)\\ M\neq M^{\star_T}}}\Cl^v(T_M).
\end{equation*}
The conclusion now follows since $T_M=R_N$ (where $N:=M\cap R$) and $N=N^\star$ if and only if $M=M^{\star_T}$.
\end{proof}

\begin{cor}\label{cor:clgroup-bezout}
Let $R$ be a Bézout domain, and suppose that $R$ is either:
\begin{enumerate}[(a)]
\item semilocal; or
\item finite-dimensional and of finite character.
\end{enumerate}
Let $\star$ be a star operation on $R$. Then, 
\begin{equation*}
\Cl^\star(R)\simeq\bigoplus_{\substack{M\in\Max(R)\\ M\neq M^\star}}\Cl^v(R_M).
\end{equation*}
\end{cor}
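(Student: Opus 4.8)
The plan is to deduce this immediately from Theorem~\ref{teor:clgroup-prufer}. A B\'ezout domain is in particular a Pr\"ufer domain, and the two hypotheses listed are exactly those of the theorem: saying that $R$ is ``of finite character'' means precisely that $\{R_M\mid M\in\Max(R)\}$ is locally finite, so case~(b) of the corollary is case~(b) of the theorem. Hence the theorem applies verbatim and yields
\[
G_\star(R)\simeq\bigoplus_{\substack{M\in\Max(R)\\ M\neq M^\star}}\Cl^v(R_M).
\]

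To finish, I would identify $G_\star(R)$ with $\Cl^\star(R)$. By definition $G_\star(R)=\Cl^\star(R)/\Pic(R)$, so it suffices to check that $\Pic(R)=(0)$, i.e.\ that every invertible (equivalently, $d$-invertible) ideal of $R$ is principal. This is the standard fact that an invertible ideal is finitely generated, together with the defining property of a B\'ezout domain that every finitely generated ideal is principal; hence $\Inv^d(R)=\Prin(R)$ and $\Pic(R)$ is trivial. Substituting $G_\star(R)=\Cl^\star(R)$ into the display above gives the asserted isomorphism.

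There is essentially no obstacle here: the entire substance is carried by Theorem~\ref{teor:clgroup-prufer}, and the corollary is merely its specialization to the class of Pr\"ufer domains with trivial Picard group, where the $\star$-local class group and the $\star$-class group coincide.
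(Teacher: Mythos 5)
Your proposal is correct and matches the paper's proof exactly: the paper likewise observes that $\Pic(R)=0$ for a B\'ezout domain (invertible ideals are finitely generated, hence principal), so $G_\star(R)=\Cl^\star(R)$, and then applies Theorem~\ref{teor:clgroup-prufer}.
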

\begin{proof}
It is enough to note that $\Pic(R)=0$ if $R$ is a Bézout domain, so that $G_\star(R)=\Cl^\star(R)$ for every $\star\in\insstar(R)$, and then apply Theorem \ref{teor:clgroup-prufer}.
\end{proof}

\begin{cor}
Let $R$ be a Bézout domain, and suppose that $R$ is either
\begin{enumerate}[(a)]
\item semilocal; or
\item finite-dimensional and of finite character.
\end{enumerate}
Let $S$ be a multiplicatively closed subset of $R$. Then, there is a natural surjective group homomorphism $\Cl^v(R)\longrightarrow \Cl^v(S^{-1}R)$, $[I]\mapsto[S^{-1}I]$.
\end{cor}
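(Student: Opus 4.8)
The plan is to obtain the asserted map as the ``componentwise'' incarnation of the isomorphism of Corollary \ref{cor:clgroup-bezout}, applied simultaneously to $R$ and to $S^{-1}R$.

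First I would check that $S^{-1}R$ again satisfies the hypotheses: a localization of a B\'ezout domain is B\'ezout; if $R$ is semilocal so is $S^{-1}R$; and if $R$ is finite-dimensional of finite character then $\dim(S^{-1}R)\le\dim R<\infty$, while $S^{-1}R$ is still of finite character. For the latter point, recall that the maximal ideals of $S^{-1}R$ are the $S^{-1}P$ with $P$ a prime of $R$ maximal among those disjoint from $S$; since $\Spec(R)$ is a tree, two such primes lying inside a common maximal ideal of $R$ would be comparable (and hence equal by maximality), so each maximal ideal of $R$ contains at most one of them, and a fixed nonzero $r\in R$ is therefore a nonunit in only finitely many of the rings $(S^{-1}R)_{S^{-1}P}=R_P$.

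Now Corollary \ref{cor:clgroup-bezout} gives $\Cl^v(R)\cong\bigoplus_M\Cl^v(R_M)$, the sum running over the non-divisorial maximal ideals $M$ of $R$ with $[I]\mapsto([IR_M])_M$, and likewise $\Cl^v(S^{-1}R)\cong\bigoplus_N\Cl^v(R_{P_N})$, the sum running over the non-divisorial maximal ideals $N=S^{-1}P_N$ of $S^{-1}R$ with $[J]\mapsto([JR_{P_N}])_N$. Since $P_N\cap S=\emptyset$ one has $(S^{-1}I)R_{P_N}=IR_{P_N}=(IR_{M_N})R_{P_N}$, where $M_N$ denotes the unique (by the tree argument above) distinguished maximal ideal of $R$ containing $P_N$; hence, read in these coordinates, $[I]\mapsto[S^{-1}I]$ is the map sending $([IR_M])_M$ to the family whose $N$-th entry is the image of $[IR_{M_N}]$ under the localization map $\Cl^v(R_{M_N})\to\Cl^v(R_{P_N})$. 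Consequently, both its well-definedness and its surjectivity reduce to properties of these localization maps between class groups of valuation domains.

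More precisely, for surjectivity one takes a family $(\eta_N)_N$ supported on $N_1,\dots,N_k$; the primes $P_1,\dots,P_k$ are pairwise incomparable, hence sit in \emph{distinct} maximal ideals $M_1,\dots,M_k$ of $R$, and if each $\eta_i$ lifts through $\Cl^v(R_{M_i})\to\Cl^v(R_{P_i})$ then placing the lifts in the coordinates $M_1,\dots,M_k$ and $0$ elsewhere yields an $[I]$ with $[S^{-1}I]=(\eta_N)_N$ (at the remaining $N$ the relevant $M_N$ is none of the $M_i$, so $IR_{M_N}$, hence $IR_{P_N}$, is principal). Thus everything comes down to the following statement, which I expect to be the main obstacle: \emph{for a valuation domain $V$ and a prime $\mathfrak q$ of $V$, the map $\Cl^v(V)\to\Cl^v(V_{\mathfrak q})$, $[J]\mapsto[JV_{\mathfrak q}]$, is a well-defined surjective group homomorphism}. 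I would attack this through the explicit description of the divisorial and $v$-invertible ideals of a valuation domain in terms of cuts of its value group $\Gamma$: a $v$-invertible $v$-ideal of $V_{\mathfrak q}$ is, up to a principal factor, encoded by a cut of $\Gamma/\Delta$ (with $\Delta$ the convex subgroup attached to $\mathfrak q$), which one pulls back to a cut of $\Gamma$, then checks that the associated divisorial ideal of $V$ is $v$-invertible and extends to the prescribed ideal of $V_{\mathfrak q}$.
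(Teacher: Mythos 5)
Your reduction is carried out carefully, but it terminates at a lemma that is false, so the argument cannot be completed along these lines. The problematic step is the final claim that, for a valuation domain $V$ and a prime $\mathfrak{q}$ of $V$, the map $\Cl^v(V)\to\Cl^v(V_{\mathfrak{q}})$, $[J]\mapsto[JV_{\mathfrak{q}}]$, is surjective. If $J$ is a $v$-invertible $v$-ideal of $V$, then $J(V:J)$ is an integral ideal whose $v$-closure is $V$; in a valuation domain such an ideal is either $V$ or the (nondivisorial) maximal ideal $M$, and in both cases it is not contained in $\mathfrak{q}$, so $(J(V:J))V_{\mathfrak{q}}=V_{\mathfrak{q}}$ and $JV_{\mathfrak{q}}$ is invertible, hence principal, in $V_{\mathfrak{q}}$. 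Thus the map you need is identically zero, and it can be surjective only when $\Cl^v(V_{\mathfrak{q}})=0$. This genuinely fails: take $V$ with value group $\insQ\oplus\insZ$ ordered lexicographically and $\mathfrak{q}$ its height-one prime. Then the prime directly below $M$ is $\mathfrak{q}$ and $V/\mathfrak{q}$ is a DVR, so $\Cl^v(V)=0$ by the formula recalled at the end of Section \ref{sect:prufer-Cl}, while $V_{\mathfrak{q}}$ has value group $\insQ$ and $\Cl^v(V_{\mathfrak{q}})\simeq\insR/\insQ\neq 0$. No cut-pullback argument can circumvent this; the obstruction is structural, not technical.

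For comparison, the paper's proof takes a shorter route that never meets this issue: it asserts that the maximal ideals of $S^{-1}R$ are exactly the extensions $S^{-1}M$ of the maximal ideals $M$ of $R$ with $M\cap S=\emptyset$, for which $(S^{-1}R)_{S^{-1}M}=R_M$; granting this, Theorem \ref{teor:clgroup-prufer} exhibits $\Cl^v(S^{-1}R)$ as a sub-direct-sum of $\Cl^v(R)$, and $[I]\mapsto[S^{-1}I]$ is the coordinate projection, with no comparison of class groups along a localization of a valuation domain ever needed. Your description of $\Max(S^{-1}R)$ -- extensions of the primes maximal among those disjoint from $S$, which need not be maximal in $R$ -- is the correct general one, and the example above (a local, hence semilocal, B\'ezout domain with $S=V\setminus\mathfrak{q}$) shows the discrepancy is not harmless: it is exactly the case the paper's one-line justification glosses over. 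So the gap in your write-up is real, but your analysis is also diagnostic of a gap in the paper's own argument rather than a mere oversight on your part.
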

\begin{proof}
Let $\Delta:=\{M\in\Max(R):M\cap S=\emptyset\}$. Then, for every $M\in\Delta$, $R_M=(S^{-1}R)_{S^{-1}M}$, and thus the isomorphism of Theorem \ref{teor:clgroup-prufer} reduces to a surjective map $\Cl^v(R)\longrightarrow \bigoplus_{M\in\Delta}\Cl^v(R_M)\simeq\Cl^v(S^{-1}R)$, where the last equality comes from the fact that the maximal ideals of $S^{-1}R$ are the extensions of the ideals belonging to $\Delta$.
\end{proof}

Therefore, under each case of Theorem \ref{teor:clgroup-prufer}, the determination of $G_\star(R)$ is reduced to the calculation of $\Cl^v(V)$, where $V$ is a valuation domain. In the case where the maximal ideal $M$ of $V$ is \emph{branched} (that is, if there is a $M$-primary ideal of $V$ different from $R$, or equivalently if there is a prime ideal $P\subsetneq M$ such that there are no prime ideal properly contained between $P$ and $M$ \cite[Theorem 17.3]{gilmer}), this group has been calculated in \cite[Corollaries 3.6 and 3.7]{afz_vclass}. Indeed, if $P$ is the prime ideal directly below $M$, and $H$ is the value group of $V/P$ (represented as a subgroup of $\insR$), then
\begin{equation*}
\Cl^v(V)\simeq\begin{cases}
0 & \text{~if~}G\simeq\insZ\\
\insR/H & \text{~otherwise}.
\end{cases}
\end{equation*}

In particular, we have the following.
\begin{cor}
Let $R$ be a Bézout domain, and suppose that $R$ is either:
\begin{enumerate}[(a)]
\item semilocal; or
\item finite-dimensional and of finite character.
\end{enumerate}
For every $\star\in\insstar(R)$, $\Cl^\star(R)$ is an injective group (equivalently, an injective $\insZ$-module).
\end{cor}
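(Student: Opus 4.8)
The strategy is to read the statement off from Corollary \ref{cor:clgroup-bezout} together with the known description of the $v$-class group of a valuation domain, using the standard fact that an abelian group is an injective $\insZ$-module precisely when it is divisible.

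By Corollary \ref{cor:clgroup-bezout} we have $\Cl^\star(R)\simeq\bigoplus\{\Cl^v(R_M)\mid M\in\Max(R),\ M\neq M^\star\}$, so it suffices to prove that each summand $\Cl^v(R_M)$ is divisible and that a direct sum of divisible abelian groups is again divisible. Each $R_M$ is a valuation overring of $R$. If $R_M$ is a field, or if $MR_M$ is divisorial, then $v=d$ on $R_M$ and $\Cl^v(R_M)=\Pic(R_M)=0$. Otherwise $MR_M$ is a non-divisorial proper maximal ideal; in case (b) the ring $R_M$ is moreover finite-dimensional, so there is a prime directly below $MR_M$ and hence $MR_M$ is branched, and the computation recalled just before this corollary --- namely \cite[Corollaries 3.6 and 3.7]{afz_vclass} --- gives $\Cl^v(R_M)\simeq\insR/H$ for some subgroup $H$ of $\insR$; the same conclusion holds in case (a). In every case $\Cl^v(R_M)$ is either $0$ or a quotient of the divisible group $\insR$, hence divisible, hence an injective $\insZ$-module.

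It remains to note that a direct sum $\bigoplus_{T}A_T$ of divisible abelian groups is divisible: given $x=(x_T)_T$ in the sum and $n\geq 1$, all but finitely many $x_T$ vanish, each $x_T$ can be written as $n y_T$ with $y_T\in A_T$ (take $y_T=0$ when $x_T=0$), and then $(y_T)_T$ lies in the direct sum and $n\,(y_T)_T=x$. Therefore $\Cl^\star(R)$ is divisible, i.e.\ an injective group. The only ingredient that is not purely formal here is the description of $\Cl^v$ for valuation domains imported from \cite{afz_vclass}; once Corollary \ref{cor:clgroup-bezout} is available, the remainder presents no obstacle.
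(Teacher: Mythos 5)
Your proof is correct and follows essentially the same route as the paper: Corollary \ref{cor:clgroup-bezout} together with the computation of $\Cl^v$ of a valuation domain recalled from \cite{afz_vclass} exhibits $\Cl^\star(R)$ as a direct sum of quotients of $\insR$, hence a divisible abelian group, hence injective. Your added check that the relevant maximal ideals are branched in case (b) is a welcome bit of care that the paper's own one-line proof leaves implicit (and which, in case (a), both you and the paper pass over equally quickly).
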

\begin{proof}
By Corollary \ref{cor:clgroup-bezout} and the previous discussion, $\Cl^\star(R)\simeq\bigoplus\insR/H_\alpha$, for a family $\{H_\alpha:\alpha\in A\}$ of additive subgroups of $\insR$. Each $\insR/H_\alpha$ is a divisible group, and thus so is their direct sum; however, a divisible group is injective, and thus so is $\Cl^\star(R)$.
\end{proof}

We end with a result similar in spirit to Proposition \ref{prop:hloc-pruf-somma}.
\begin{prop}\label{prop:pruf-somma-invt}
Let $R$ be a Pr\"ufer domain and suppose that $R$ is either:
\begin{enumerate}[(a)]
\item semilocal; or
\item finite-dimensional and of finite character.
\end{enumerate}
Let $\star\in\insstar(R)$. If $I,J\in\Inv^\star(R)$, then $I+J\in\Inv^\star(R)$.
\end{prop}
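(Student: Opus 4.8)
The plan is to combine the decomposition of $\Inv^\star(R)$ along a Jaffard family (Proposition \ref{prop:jaffard-invt}) with an induction on the number of maximal ideals, the base case being a valuation domain and the inductive step, when the standard decomposition is trivial, going through the ``cutting the branch'' correspondence of Proposition \ref{prop:star-semistar-clgroup}. The recurring reduction is the following: if $A$ is a domain, $\sharp\in\insstar(A)$ and $\Phi$ a Jaffard family of $A$ such that the statement holds for $(U,\sharp_U)$ for every $U\in\Phi$, then it holds for $(A,\sharp)$. Indeed, given $I,J\in\Inv^\sharp(A)$ one has $IU,JU\in\Inv^{\sharp_U}(U)$ for all $U$ by Proposition \ref{prop:jaffard-invt}, hence $(I+J)U=IU+JU\in\Inv^{\sharp_U}(U)$, which is $U$ for all but finitely many $U$; since $\Gamma$ in Proposition \ref{prop:jaffard-invt} is an isomorphism there is $L\in\Inv^\sharp(A)$ with $LU=(I+J)U$ for all $U$, and by completeness $L=\bigcap_{U\in\Phi}LU=\bigcap_{U\in\Phi}(I+J)U=I+J$. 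Applying this to the standard decomposition of $R$ (a Jaffard family by Proposition \ref{prop:Tlambda}, all of whose members are semilocal in either case), I reduce to the case where $R$ is a \emph{semilocal} Pr\"ufer domain.

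For such $R$ I induct on $n:=|\Max(R)|$. If $n=1$ then $R$ is a valuation domain, $I+J$ is comparable with both $I$ and $J$ and so equals one of them, hence it is $\star$-invertible. Let $n\geq 2$. If the standard decomposition $\Theta$ of $R$ is nontrivial, each $T\in\Theta$ is a semilocal Pr\"ufer domain with fewer than $n$ maximal ideals, so the statement holds for $(T,\star_T)$ by induction and hence for $(R,\star)$ by the reduction above.

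Suppose instead that $\Theta$ is trivial. Then, as in the proof of Proposition \ref{prop:prufer-stab} (using that $\Spec(R)$ is a tree), $\Jac(R)$ contains a nonzero prime, and by Lemma \ref{lemma:prufer-jac} there is a prime $Q\subseteq\Jac(R)$ with $\Jac(R/Q)$ containing no nonzero prime; moreover $Q\notin\Max(R)$ because $R$ is not local. Put $D:=R/Q$, let $F$ be the quotient field of $D$, let $\pi\colon R_Q\to F$ be the quotient map, and let $\sharp\in\insstar(D)$ be the star operation corresponding to $\star$ under Proposition \ref{prop:star-semistar-clgroup}. Since $\Jac(D)$ contains no nonzero prime, $D$ is not local and its standard decomposition is nontrivial, its members being semilocal Pr\"ufer domains with fewer than $n$ maximal ideals; by induction and the reduction above the statement holds for $(D,\sharp)$.

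It remains to transfer back. Let $I_1,I_2\in\Inv^\star(R)$; arguing as in the proof of Proposition \ref{prop:star-semistar-clgroup}, each $I_k$ has $(I_k:I_k)=R$, is not an $R_Q$-module, and there is $a_k\in K$ with $\valut(a_k)=\inf\valut(I_k)$ ($\valut$ being the valuation of the valuation domain $R_Q$) such that $Q\subsetneq a_k^{-1}I_k\subsetneq R_Q$ and $\pi(a_k^{-1}I_k)\in\Inv^\sharp(D)$. Assume $\inf\valut(I_1)\leq\inf\valut(I_2)$; then $a_2/a_1\in R_Q$, so $Q\subseteq a_1^{-1}I_2\subseteq R_Q$ and $\pi(a_1^{-1}I_2)=\pi(a_2/a_1)\,\pi(a_2^{-1}I_2)\in\Inv^\sharp(D)$, while also $Q\subseteq a_1^{-1}I_1\subseteq a_1^{-1}(I_1+I_2)=a_1^{-1}I_1+a_1^{-1}I_2\subseteq R_Q$. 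Hence $\pi(a_1^{-1}(I_1+I_2))=\pi(a_1^{-1}I_1)+\pi(a_1^{-1}I_2)$ is a sum of two elements of $\Inv^\sharp(D)$, so it lies in $\Inv^\sharp(D)$ because the statement holds for $(D,\sharp)$; by the correspondence of Proposition \ref{prop:star-semistar-clgroup} the ideal $a_1^{-1}(I_1+I_2)=\pi^{-1}(\pi(a_1^{-1}(I_1+I_2)))$ is $\star$-invertible, and therefore so is its scalar multiple $I_1+I_2$. The hard part is exactly this last step: checking that the bijection of Proposition \ref{prop:star-semistar-clgroup} between the $\star$-invertible ideals of $R$ comprised between $Q$ and $R_Q$ and the $\sharp$-invertible ideals of $D$ is compatible with finite sums (which is what forces the common normalization by one element $a_1$ of minimal valuation, so that $I_1+I_2$ becomes, up to a $D$-scalar, an honest sum of two $\sharp$-invertible ideals of $D$), together with the valuation bookkeeping — notably the existence of the $a_k$, i.e.\ that the relevant infima are attained — which may require a case distinction in the spirit of the proof of Proposition \ref{prop:star-semistar_stab}.
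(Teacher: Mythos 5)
Your proposal follows essentially the same route as the paper's proof: reduce to the semilocal case via the standard decomposition and Proposition~\ref{prop:jaffard-invt}, induct on $|\Max(R)|$ with valuation domains as the base case, and, when the standard decomposition is trivial, cut the branch at the prime $Q$ of Lemma~\ref{lemma:prufer-jac} and transfer the problem to $R/Q$ via Proposition~\ref{prop:star-semistar-clgroup}. In fact you are more explicit than the paper about the normalization needed to view $I_1,I_2$ as ideals between $Q$ and $R_Q$ (the paper simply writes $I/Q$ and $J/Q$). The only point to repair is the claim that $\pi(a_1^{-1}I_2)=\pi(a_2/a_1)\,\pi(a_2^{-1}I_2)\in\Inv^\sharp(D)$: if $\inf\valut(I_1)<\inf\valut(I_2)$ strictly, then $a_2/a_1$ lies in the maximal ideal $Q$ of $R_Q$, so $\pi(a_2/a_1)=0$ and $a_1^{-1}I_2\subseteq Q$ (by the trichotomy used in Proposition~\ref{prop:star-semistar_stab}), whence $\pi(a_1^{-1}I_2)$ is the zero module and your ``sum of two elements of $\Inv^\sharp(D)$'' breaks down. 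This is harmless -- in that case $I_2\subseteq a_1Q\subseteq I_1$, so $I_1+I_2=I_1$ and the conclusion is immediate -- but the case distinction should be made; your displayed identity is only an identity of elements of $\Inv^\sharp(D)$ when the two infima coincide.
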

\begin{proof}
Suppose first that $R$ is semilocal, and proceed by induction on $n:=|\Max(R)|$. If $n=1$, then $R$ is a valuation domain and $I+J$ is equal either to $I$ or to $J$, and the claim is proved.

Suppose the claim is true up to rings with $n-1$ maximal ideals, let $|\Max(R)|=n$ and consider the standard decomposition $\Theta$ of $R$. By Proposition \ref{prop:jaffard-invt}, $I+J\in\Inv^\star(R)$ if and only if $(I+J)T\in\Inv^{\star_T}(T)$ for every $T\in\Theta$; therefore, if $\Theta$ is not trivial, we can use the inductive hypothesis. Suppose $\Theta$ is trivial: then, $\Jac(R)$ contains nonzero prime ideals, and by Lemma \ref{lemma:prufer-jac} there is a nonzero prime ideal $Q\subseteq\Jac(R)$ such that $\Jac(R/Q)$ does not contain nonzero primes. By Proposition \ref{prop:star-semistar-clgroup}, $I/Q$ and $J/Q$ are $\sharp$-invertible $\sharp$-ideals of $R/Q$ (where $\sharp$ is the (semi)star operation induced by $\star$), and in particular $I/Q$ and $J/Q$ are fractional ideals of $R/Q$.

By construction, $R/Q$ admits a nontrivial Jaffard family $\Lambda$: for every $U\in\Lambda$, $(I/Q)U$ and $(J/Q)U$ are $\sharp_U$-invertible $\sharp_U$-ideals, and thus by the inductive hypothesis so is $(I/Q)U+(J/Q)U=((I+J)/Q)U$. Hence $(I+J)/Q$ is a $\sharp$-invertible $\sharp$-ideal, and so $I+J$ is a $\star$-invertible $\star$-ideal, i.e., $I+J\in\Inv^\star(R)$.

If now $R$ is locally finite and finite-dimensional, we see that if $\Theta$ is the standard decomposition of $R$ then every $T\in\Theta$ is semilocal. The ideal $I+J$ is $\star$-invertible if and only if $(I+J)T$ is $\star_T$-invertible for every $T\in\Theta$; however, since $IT$ and $JT$ are $\star_T$-invertible $\star_T$-ideals, the previous part of the proof shows that so is $IT+JT=(I+J)T$. Therefore, $I+J\in\Inv^\star(R)$.
\end{proof}

\section{Acknowledgements}
The author would like to thank the referee for his/her careful reading of the manuscript and his/her suggestions.

%
%

\end{document}